\numberwithin{equation}{subsection} 
\newcommand{\N}{\mathbb{N}}
\newcommand{\Z}{\mathbb{Z}}
\newcommand{\R}{\mathbb{R}}
\newcommand{\X}{\mathcal{X}}
\renewcommand{\d}{\mathrm{d}}
\renewcommand{\P}{\mathbb{P}}
\newcommand{\E}{\mathbb{E}}
\newcommand{\simp}{\mathcal{S}}
\let\amsmath@bigm\bigm
\renewcommand{\bigm}[1]{%
  \ifcsname fenced@\string#1\endcsname
    \expandafter\@firstoftwo
  \else
    \expandafter\@secondoftwo
  \fi
  {\expandafter\amsmath@bigm\csname fenced@\string#1\endcsname}%
  {\amsmath@bigm#1}%
}
\newcommand{\DeclareFence}[2]{\@namedef{fenced@\string#1}{#2}}
\DeclareFence{\mid}{|}
\newcommand{\vertiii}[1]{{\left\vert\kern-0.25ex\left\vert\kern-0.25ex\left\vert #1 
    \right\vert\kern-0.25ex\right\vert\kern-0.25ex\right\vert}}
\newcommand\mydots{\makebox[1em][c]{.\hfil.\hfil.}}
\newtheorem{lem}{Lemma}[section]
\newtheorem{prop}[lem]{Proposition}
\newtheorem{theo}[lem]{Theorem}
\newtheorem{cor}[lem]{Corollary}
\theoremstyle{definition}
\renewcommand\qedsymbol{$\blacksquare$}
\title{Time series models on compact spaces, with an application to dynamic modeling of relative abundance data in Ecology}
\author{Guillaume Franchi {\it guillaume.franchi@ensai.fr} \\
           Univ Rennes, Ensai, CNRS, CREST -- UMR 9194, F-35000 Rennes, France\\
       % Ecole Nationale de la Statistique et de l’Analyse de l’Information (ENSAI)\\
       % 35170 Bruz, France
        Lionel Truquet {\it lionel.truquet@ensai.fr} \\
             Univ Rennes, Ensai, CNRS, CREST -- UMR 9194, F-35000 Rennes, France
       % Ecole Nationale de la Statistique et de l’Analyse de l’Information (ENSAI) \\
       % 35170 Bruz, France
             }}
\date{}
\begin{document}

\maketitle

\begin{abstract}
 Motivated by the dynamic modeling of relative abundance data in ecology, we introduce a general approach to model stationary Markovian or non Markovian time series on (relatively) compact spaces such as a hypercube, the simplex or a sphere in the Euclidean space. Our approach is based on a general construction of infinite memory models, called chains with complete connections. 
The two main ingredients involved in our generic construction are a parametric family of probability distributions on the state space and a map from the state space to the parameter space. Our framework encompasses Markovian models, observation-driven models and more general infinite memory models.
Simple conditions ensuring the existence and uniqueness of a stationary and ergodic path are given. We then study in more details statistical inference in two time series models on the simplex, based on either a Dirichlet or a multivariate logistic-normal conditional distribution. Usefulness of our models to analyze abundance data in ecosystems is also  discussed.
\end{abstract}
\vspace*{1.0cm}

\footnoterule
\noindent
{\sl 2010 Mathematics Subject Classification:} Primary 62M10; secondary 60G05, 60G10.\\
\noindent
{\sl Keywords and Phrases:} time series, ergodicity, dependence. \\

\section{Introduction}\label{sec:1}

Modeling non-Gaussian autoregressive time series has attracted lots of effort in the time series literature. 
The need of specific models is often motivated by specific state spaces for the time series of interest due to some constraints on its values. This problem concerns for instance discrete-valued time series, with counts or categorical data.  See for instance 
\citet{ferland2006integer} or \citet{davis2016theory} for count time series and \citet{davis2021count} for a recent review of existing models. For categorical data, we defer the reader to \citet{FM}, \citet{TF} or \citet{truquet2020coupling}. 
A general construction of observation-driven models for discrete-valued time series has been investigated recently in \citet{armillotta2022observation}.
Other Gaussian type time series models have been developed for continuous but bounded state spaces. This concerns for instance data on the unit interval for modeling rates, proportions or correlation; as in \citet{cribari2023beta}  or \citet{gorgi2021beta} among other. Dynamics for other kind of structured data also concern time series on the sphere, as in \citet{zhu2024spherical}, or time series on the simplex studied for instance in \citet{grunwald1993time} or \citet{zheng2017dirichlet}.
Time series on the simplex and its use for analyzing continuous proportions in Ecology was one of the main motivation for this paper and we defer the reader to Section \ref{sec:3} for a more complete discussion on this problem.

A general routine to obtain time series models with a specific conditional distribution is to consider the so-called observation-driven models, following the terminology of \citet{cox1981statistical}.
In this case,  a time series model is constructed from a parametric family of probability distributions $\left\{\mu_{\alpha}: \alpha\in K\right\}$ by assuming that the conditional distribution of the process $Y_t\vert Y_{t-1},Y_{t-2},\ldots$ is given from this parametric family and time-varying parameters $\alpha=\alpha_t$ satisfying a stochastic recursion of the form 
\begin{equation}\label{recursi}
\alpha_t=g\left(\alpha_{t-1},Y_{t-1}\right),
\end{equation}
for a suitable mapping $g$ and then depends on past values of the series.
Many time series models introduced in the previous references are constructed from this idea. Relevant approaches for constructing such models are sometimes based on a generalization of ARMA processes, using appropriate link functions $g$ between the latent variable $\alpha_t$ and the observation $Y_t$. See \citet{zheng2015generalized} and the references therein for such kind of construction.

In this paper, we introduce a method for defining stationary observation-driven models on compact state spaces but directly with a so-called "infinite memory" property. Related to our previous notations, we consider latent variables of the form $\alpha_t=\alpha\left(Y_{t-1},Y_{t-2},\ldots\right)$ for suitable measurable mapping $\alpha$ defined on a Hilbert cube. See \citet{doukhan2008weakly} for a construction of real-valued autoregressive processes of this form.
In the discrete setting, these models are often called chains with complete connections. See \citet{bressaud1999decay} or \citet{chazottes2020optimal} and the references therein for an introduction to the theory of such stochastic processes which have been introduced in probability theory and studied mainly for their connections with Gibbs measures in statistical mechanics.
It turns out that these models contain Markov models as a special case but also some non-Markovian models for which the latent variable $\alpha_t$ satisfies a stochastic recursion of the form (\ref{recursi}). Indeed, setting $g_t=g\left(\cdot,Y_{t-1}\right)$, the assumptions made on $g$ often guaranty some kind of convergence for the backward iterations $g_t\circ g_{t-1}\circ\cdots\circ g_{t-n}$, thus leading to a representation in term of a chain with complete connections. Note that this convergence property, sometimes called invertibility, is natural because it is often needed for studying consistency of pseudo-maximum likelihood estimator. See for instance assumption $B3$ in \citet{douc2013ergodicity}. 

The link between chains with complete connections and many useful categorical time series models has been discussed in \citet{TF} and \citet{truquet2020coupling} but the extension to more general compact state spaces is new and has not been investigated for time series analysis. This is one of the important contribution of the paper. Note however that our infinite-memory approach requires some restrictions, one of the most important is that the latent variable $\alpha_t$ has to be bounded for applying our results. This is then suitable when the state space is compact and the latent variables depends continuously on transformed past observations $h(Y_{t-j})$ with a bounded mapping $h$. Even in this case, such an extension is challenging because contrarily to most of the existing results, one cannot use Markov chain theory in this context because the process $\alpha_t$ is no more Markovian. Moreover, working directly with chains with complete connections can also have some advantages. In particular, one can assume more persistence with respect to past observations in the expression of the latent variable $\alpha_t$, in contrast to existing results based on Markov chains theory which leads to geometrically ergodic models and then to an exponential decay of this dependence. See in particular our result given in Proposition \ref{ODC} below. 

It should be also pointed out that our approach avoids data transformations for modeling the time series, an approach which has some drawbacks. For instance, on the open unit interval $(0,1)$ (which closure is compact), one can apply a one-to-one transformation $Z_t=f(Y_t)$ with $f:(0,1)\rightarrow \R$ and model the new real-valued time series with a standard ARMA model. The logit or arcsine transformation is standard in this context. However, the prediction of the original variable $Y_t$ will be $f^{-1}\left(\E(Z_t\vert Y_{t-1},Y_{t-2},\ldots)\right)$ which is sub-optimal in general since it does not coincide with the optimal $\mathbb{L}^2-$prediction $\E\left(Y_t\vert Y_{t-1},Y_{t-2},\ldots\right)$ when $f$ is not linear. See \citet{douma2018analysing} for a discussion about the problem of data transformation in the analysis of continuous proportions in Ecology.

The paper is organized as follows. In Section \ref{sec:2}, we give some mathematical results concerning the construction of stationary paths for infinite memory dynamical systems defined with a one-point conditional distribution. Many examples of time series on the simplex, the hypercube or the unit sphere are presented in Section \ref{sec:3} as an illustration of the general theory. 
In Section \ref{sec:4}, we focus on some time series models on the simplex for which we discuss statistical inference procedures as well as interpretations of model parameters. Numerical experiments and a real data application are provided in Section \ref{sec:5} and Section \ref{sec:6} respectively. A conclusion is given in Section \ref{sec:7} and proofs of some of our results are given in Section \ref{sec:8}. Finally, additional proofs are provided in an online supplementary material.

\section{Construction of ergodic processes on a compact domain}\label{sec:2}

The aim of this section is to provide a generic approach to construct stationary time series models taking values on a compact space.
We will first state an abstract result, Theorem \ref{theo:2.1} below, which gives simple sufficient conditions on a transition kernel defining the conditional distribution of the model and from which existence and uniqueness of a stationary path is guaranteed. These processes can be Markovian or not. We then discuss how to construct these transition kernels from a parametric family of probability distributions on a compact space and how to get parsimonious models following this approach. 

\subsection{Chains with complete connections}

In what follows, we consider some <<one-point>> transition kernels, possibly depending on infinitely many past values. These transition kernels are used to define the conditional distribution of the outcome, given the past observations. The corresponding stochastic processes, called chains with complete connections, are widely studied in the literature, at least for finite-state spaces. See for instance \citet{bressaud1999decay}, \citet{truquet2020coupling} or \citet{chazottes2020optimal}. We then first generalize a well-known result for getting stationarity and ergodicity conditions for these models when the state space is continuous. Our extension, which is particularly adapted to the case of compact state spaces, is more or less a rewriting exercise of existing proofs. However,  Theorem \ref{theo:2.1} and its Corollary \ref{corexist} provide some guidelines for constructing general time series models on the simplex or other compact state spaces, and for sake of completeness, we provide a proof in the Appendix section.

Let us first introduce some basic notations that will be used throughout the paper.

In what follows, $X$ denotes an arbitrary compact subspace of an Euclidean space. 
Some examples are given by the hypercube $X=[0,1]^d$, $d\geq 1$, the unit sphere
\begin{equation}\label{sphere}
X=\mathcal{S}^2:=\left\{x=(x_1,x_2,x_3)\in \R^3: x_1^2+x_2^2+x_3^2=1\right\},
\end{equation}
or the simplex $\simp_{d-1}$ of $\R^d$ defined by 
	\begin{equation}\label{simplex}
\simp_{d-1} = \left\lbrace (y_1,\dots , y_d) \in \left(\R_+^*\right)^d \ \bigm\mid \ \sum_{i=1}^d y_i=1 \right\rbrace.
	\end{equation}

For any sequence $y=\left(y_t\right)_{t\in \Z}$ valued in a given set $X$, we will denote for all $t \in \Z$:

	\begin{equation}
	y_t^- := \left( y_t , y_{t-1},y_{t-2},\dots \right) =  (y_{t-j})_{j \in \N} \in X^{\N}.
	\end{equation}

Finally, given two sequences $y=\left( y_n \right)_{n \in \N}$ and $z=\left( z_n \right)_{n \in \N}$ valued in $X^{\N}$, we denote for all integer $m\geq 1$:

	\begin{equation}
	y \overset{m}{=} z  \iff \forall i \in \left\lbrace 0, \dots m-1\right\rbrace, \ y_i = z_i.
	\end{equation}

We assume that $X$ is endowed with its Borel $\sigma$-field $\X = \mathcal{B}(X)$ and we consider a reference measure $\xi$ on $\X$. 
There often exists a natural reference measure. For instance the Lebesgue measure $\xi=\lambda_d$ on the hypercube $[0,1]^d$, the Haar measure on the sphere or the measure 
\begin{equation}\label{doms}
\xi\left(dx_1,\ldots,d x_d\right)=\mathds{1}_{[0,1]}(x_d)\delta_{1-\sum_{i=1}^{d-1}x_i}(d x_d)\lambda_{d-1}\left(d x_1,\ldots, d x_{d-1}\right),
\end{equation}
on the simplex.

Consider also a probability kernel $P$ with source $\left( X^{\N} , \X^{\otimes \N} \right)$ and target $(X, \X)$:

	\begin{equation}
	\begin{array}[t]{rc}
P : &  X^{\N} \times  \X \longrightarrow [0;1] \\
 & (y,A) \longmapsto P(A \mid y)
\end{array}
	\end{equation}

such that hypotheses \textbf{A1} and \textbf{A2} below are satisfied.

\begin{enumerate}%[label=\textbf{A\arabic*}]
	\item For all $y\in X^{\N}$, we have
	\begin{equation}
		P(\cdot \mid y) = p(\cdot \mid y) \cdot \xi
	\end{equation}
	where $p(\cdot \mid y)$ is a non-negative measurable function defined on $X$.
	\item \label{assumptionA2} The application
	\begin{equation}
		\begin{array}{c}
			\left( X^{\N} \times X  , \X^{\otimes \N} \otimes \X \right) \longrightarrow \left(\R_+, \mathcal{B}(\R_+)\right)\\
			(y,x) \longmapsto p(x \mid y)
		\end{array}
	\end{equation}
	is measurable.
\end{enumerate}

\medskip
We remind that the distance in total variation $d_{TV}$ can be defined for two probability measures $\mu, \nu$ absolutely continuous with respect to reference measure $\lambda$ (i.e. $\mu=f \cdot \lambda$ and $\nu = g \cdot \lambda$, where $f$ and $g$ are non-negative measurable functions) by:
    \begin{align}
        d_{TV}(\mu ; \nu) & = \dfrac{1}{2} \int | f-g| \d \lambda \notag \\
         & = \underset{0 \leqslant h \leqslant 1}{\sup} \  \left( \int h \d \mu - \int h \d \nu \right).
    \end{align}

Let us now define for all $m \in \N$:

	\begin{equation}\label{bm}
	b_m := \sup \ \left\lbrace d_{TV} \left( P(\cdot \mid y) , P(\cdot \mid z ) \right) \ \bigm\mid \ y \overset{m}{=} z \right\rbrace.
	\end{equation}
When $m=0$, $b_0$ has to be understood as a supremum over all the possible pairs of past sequences $y,z\in X^{\N}$.

For sake of completeness, we provide a detailed proof of the following result in the Appendix section.
\begin{theo}\label{theo:2.1}

Assume that $b_0 <1$ and $\displaystyle{\sum_{m \in \N}} b_m < +\infty$, then there exists a strictly stationary $(Y_t)_{t\in \Z}$ valued in $X$ such that

	\begin{equation} \label{eq:2.1.10}
	\forall t \in \Z,\quad \forall G \in \X, \ \P (Y_t \in G \ \mid \ Y_{t-1}^- = y_{t-1}^-) = P(G \mid y_{t-1}^- ).
	\end{equation}
Moreover, the distribution of the process $(Y_t)_{t\in \Z}$ is unique. Additionally, the process $(Y_t)_{t\in\Z}$ is ergodic.

\end{theo}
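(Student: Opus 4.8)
The plan is to derive all three conclusions from a single quantitative loss-of-memory estimate obtained by coupling. The starting point is the remark that, by \textbf{A1}, for any two pasts $y,z\in X^{\N}$ one has
\[
d_{TV}\big(P(\cdot\mid y),P(\cdot\mid z)\big)=1-\int_X\min\big(p(x\mid y),p(x\mid z)\big)\,\xi(dx),
\]
and that the joint measurability granted by \textbf{A2} allows one to construct, measurably in the pair $(y,z)$, a maximal coupling of $P(\cdot\mid y)$ and $P(\cdot\mid z)$: a random pair with the prescribed marginals that are equal with probability exactly $1-d_{TV}(P(\cdot\mid y),P(\cdot\mid z))\ge 1-b_m$ as soon as $y\overset{m}{=}z$. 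I would also note that $b_m$ is non-increasing (the supremum in \eqref{bm} runs over a smaller set as $m$ grows), so $b_m\le b_0<1$ and $\prod_{m\ge0}(1-b_m)>0$ precisely because $\sum_m b_m<+\infty$.

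Next I would couple two forward chains $(Y_t)_{t\ge1}$ and $(Y'_t)_{t\ge1}$ issued from arbitrary pasts $y$ and $z$, using the maximal coupling above at every step, applied to the two currently realized pasts. Writing $R_t$ for the length of the common suffix $Y_{t-1}=Y'_{t-1},\dots$ just before step $t$, the disagreement probability at step $t$ is at most $b_{R_t}$, while $R_t$ increases by one after an agreement and is reset to $0$ after a disagreement. Because each run of agreements is ``permanent'' with probability at least $\prod_{m\ge0}(1-b_m)>0$, a geometric/Borel--Cantelli argument shows that disagreements occur only finitely often almost surely; hence the two chains couple, and in particular
\[
d_{TV}\big(\mathcal{L}(Y_t\mid\text{past }y),\mathcal{L}(Y'_t\mid\text{past }z)\big)\le \P(Y_t\ne Y'_t)\xrightarrow[t\to\infty]{}0
\]
uniformly over $y,z$. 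This is the loss of memory on which everything rests.

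For existence I would fix an arbitrary reference past $\bar y$, start the chain at time $-n$ with past $\bar y$, and run it forward with $P$. Comparing the starting times $-n$ and $-n'$ by the coupling above shows that the law of any finite window $(Y_{-k},\dots,Y_0)$ is Cauchy in total variation as $n\to\infty$; the limits form a consistent, shift-invariant family of finite-dimensional distributions, and Kolmogorov's extension theorem produces a strictly stationary process $(Y_t)_{t\in\Z}$. It then remains to check \eqref{eq:2.1.10}, i.e. that this process admits $P$ as its one-point conditional distribution; I would verify $\E[h(Y_0)\mid Y_{-1}^-]=\int h\,dP(\cdot\mid Y_{-1}^-)$ for bounded measurable $h$ by passing to the limit in the corresponding identities for the finite approximations, using the total-variation continuity of $y\mapsto P(\cdot\mid y)$ (from $b_m\to0$) together with \textbf{A1}--\textbf{A2}. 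Uniqueness is then immediate from the memory-loss estimate, since any two stationary solutions can be coupled and must share all finite-dimensional distributions; ergodicity follows because the same estimate yields asymptotic independence (mixing) of events depending on far-apart coordinates.

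The step I expect to be the main obstacle is the verification of \eqref{eq:2.1.10} for the limiting measure: one must transfer an almost-sure conditional identity through a total-variation limit, which requires care with the measurable version of the conditional density and with the interchange of the limit and the conditional expectation, and this is exactly where hypotheses \textbf{A1} and \textbf{A2} (existence of a jointly measurable density with respect to $\xi$) are indispensable. A secondary technical point is making the maximal coupling measurable in the past variables so that the whole forward coupling is a bona fide stochastic process, and upgrading the per-step agreement bound to a genuine almost-sure coupling via the renewal/geometric structure, since the two ancient pasts never literally coincide.
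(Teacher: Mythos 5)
Your proposal is correct and follows essentially the same route as the paper: recursive maximal coupling of two chains issued from different pasts, a loss-of-memory estimate uniform in the pasts, a Cauchy-in-total-variation family of finite-dimensional laws extended by Kolmogorov's theorem, then limit verification of the conditional identity, uniqueness, and ergodicity via mixing of cylinder events. The only cosmetic difference is that where you invoke a geometric/Borel--Cantelli renewal argument for the coupling time, the paper makes this rigorous by stochastic domination with the auxiliary ``house-of-cards'' Markov chain $(S_n)$ and the bound $\P(U_n^{x,y}\neq V_n^{x,y})\leqslant b_n^*$, citing \citet{bressaud1999decay} for the decay of $b_n^*$ --- exactly the upgrade you yourself flag as needed.
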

\paragraph{Note.} For Markovian time series, i.e. 
$$P\left(G\vert y_{t-1}^{-}\right)=\widetilde{P}\left(G\vert y_{t-1},\ldots,y_{t-p}\right),$$
for some positive integer $p$ and a probability kernel $\widetilde{P}$ from $\left(X^p,\mathcal{B}(X)^{\otimes p}\right)$ to $\left(X,\mathcal{B}(X)\right)$, 
it is only necessary to check the condition $b_0<1$ to ensure the existence of a unique stationary solution. Indeed, we have $b_m=0$ for $m\geq p+1$
and the condition $\sum_{m\in \N}b_m<\infty$ is automatically satisfied.

\subsection{Construction of observation-driven models on a compact space}\label{sec:2.2}
Theorem \ref{theo:2.1} does not provide a generic method for constructing some transition kernels satisfying the required regularity assumptions. In what follows, we consider a construction from a parametric family of probability distributions on $X$. Dynamic models can then be obtained by allowing the finite number of parameters of the density to depend on past observations. 
We then follow a general routine to obtain time series models with a specific conditional distribution. 
This kind of model is often called observation-driven, following the terminology of \citet{cox1981statistical}. Many time series models are constructed from this idea.
See for instance \citet{doukhan2015inference} for Poisson autoregressive models with infinite memory, \citet{robinson2006pseudo} for ARCH($\infty$) models or \citet{TF} for autoregressive categorical time series models.

In what follows, we consider a Borel subset $K$ of $\R^k$ and $\left\lbrace \mu_{\alpha} \right\rbrace_{\alpha \in K}$ be a family of probability distributions on $(X,\X)$ such that:
    \begin{equation}
        \forall \alpha \in F, \ \mu_{\alpha} = f_{\alpha} \cdot \xi
    \end{equation}
    where $f_{\alpha}$ is a measurable function defined on $X$.

We define the probability kernel:
    \begin{equation}\label{condi}
        P: \begin{array}[t]{l}
            X^{\N} \times \X \longrightarrow [0;1] \\
            (y,G) \longmapsto P(G \mid y) := \mu_{\alpha(y)}(G)
        \end{array}
    \end{equation}
    where $\alpha : X^{\N} \longrightarrow K$ is a fixed measurable function.

In what follows, we consider an arbitrary norm $\vert\cdot\vert$ on $\R^k$.
The corresponding operator norm on the space $\mathcal{M}_k$ of square-matrices with real coefficients will be denoted in the same way.
Finally, we denoted by $\mathcal{P}_1(\X)$ the set of probability measures in $\left(X,\mathcal{X}\right)$.
The following assumptions will be needed.

\begin{description}
\item[H1]
For any  $\xi\left(\{x\in X: \inf_{\alpha\in K}f_{\alpha}(x)>0\}\right)>0$.
\item[H2]
The following mapping is Lipschitz continuous.
            \begin{equation}
                \begin{array}{c}
                    \alpha \longmapsto \mu_{\alpha}  \\
                     \left(K , \vert \cdot \vert\right) \longrightarrow  \left( \mathcal{P}_1(\X), d_{TV} \right)
                \end{array}
            \end{equation}
    \item[H3] $\sum_{m \geqslant 0} \sup \ \left\lbrace \vert \alpha (u) - \alpha (v) \vert \bigm\mid u \overset{m}{=}v \right\rbrace < + \infty$.
    \end{description}

The following result can be used to construct Markov or non-Markov time series models on $X$ that satisfy the required regularity conditions given in Theorem \ref{theo:2.1}. Note that the condition $b_0<1$ follows directly from
{\bf H1}, since we have the equality
$$d_{TV}\left(\mu_{\alpha},\mu_{\alpha'}\right)=1-\int \min\left(f_{\alpha}(x),f_{\alpha'}(x)\right)\xi(dx).$$
Assumptions {\bf H2-H3} automatically imply the condition $\sum_{m\in \N}b_m<\infty$ and the proof of the following corollary is straightforward.
\begin{cor}\label{corexist}
\begin{enumerate}
\item 
Suppose that Assumptions {\bf H1-H3} hold true. There then exists a strictly stationary process $(Y_t)_{t\in Z}$ valued in $X$ such that:
    \begin{equation}\label{condiprim}
        \forall t \in \Z, \ \P \left( Y_t \in G \ \mid \ Y_{t-1}^-=y_{t-1}^- \right) = P (G \mid y_{t-1}^-).
    \end{equation}
Moreover, the distribution of the process $(Y_t)_{t\in \Z}$ is unique and the process $(Y_t)_{t\in\Z}$ is ergodic.
\item
Suppose that $\alpha$ only depends on a finite number of variables, i.e. $\alpha(y)=\widetilde{\alpha}\left(y_0,\ldots,y_{p-1}\right)$ for $y\in X^{\N}$.
Then if Assumption {\bf H1} holds true, there exists a strictly stationary process $(Y_t)_{t\in Z}$ valued in $X$ such that:
    \begin{equation}
        \forall t \in \Z, \ \P \left( Y_t \in G \ \mid \ Y_{t-1}^-=y_{t-1}^- \right) = P (G \mid y_{t-1}^-).
    \end{equation}
 Moreover, the distribution of the process $(Y_t)_{t\in \Z}$ is unique and the process $(Y_t)_{t\in\Z}$ is ergodic.   
\end{enumerate}
\end{cor}

\paragraph{Note.} The second point of Corollary \ref{corexist} concerns Markov processes and only involves Assumption {\bf H1} which is a standard Doeblin type condition that is often used for getting ergodic properties of Markov chains models with compact state spaces.
%%%%

We now give a specific result for some parsimonious models of the previous type by allowing the random parameter $\alpha_t$ to depend on $Y_{t-1}$ and on its lagged value $\alpha_{t-1}$. Using parsimonious versions of this type is very classical in the time series literature. It has been used for instance by \citet{bollerslev1986generalized} for conditionally heteroscedastic time series, \citet{fokianos2009poisson} for autoregressive count time series or 
\citet{FM} for categorical time series. When the density $f_{\alpha}$ defined above corresponds to the Dirichlet distribution, \citet{zheng2017dirichlet} also considered models of this type. A discussion and a comparison between our models and that of \citet{zheng2017dirichlet} will be given in Section \ref{sec::3.1}.

For two Borel subsets $K,K'$ of $\R^k$, let $g:K\rightarrow K'$ be a one-to-one continuous mapping such that $g^{-1}$ is Lipschitz on any compact subset of $K'$. Let also $h:X\rightarrow \R^k$ be a measurable and bounded mapping. We are going to study the existence of a stationary solution $\left((Y_t,\alpha_t)\right)_{t\in\Z}$ for the system
\begin{equation}\label{OD}
g\left(\alpha_t\right)=F+D h\left(Y_{t-1}\right)+E g\left(\alpha_{t-1}\right),\quad
P\left(G\vert Y_{t-1}^{-}\right)=\mu_{\alpha_t}(G).
\end{equation}
where $F\in \R^k$, $D,E$ are square-matrices of size $k$ and  $(\alpha_t)_{t\in\Z}$ is a stationary process taking values in $K$.

In what follows, for any square matrix $B$ with real coefficients, we denote by $\rho(B)$ the spectral radius of $B$. Moreover, for any subset $A$ of $\R^k$, we denote its closure by $\overline{A}$. 
\begin{prop}\label{ODC}
Suppose that $\rho(E)<1$ and set for $y\in X^{\N}$,
$$\zeta(y)=(I-E)^{-1}F+\sum_{j\geq 1}E^{j-1}D h(y_{j-1}).$$
Suppose that $\overline{\mbox{Range }(\zeta)}\subset K'$ and that Assumptions {\bf H1-H2} hold true.

There then exists a stationary process $\left((Y_t,\alpha_t)\right)_{t\in \Z}$ solution of (\ref{OD}). The distribution of such a process is unique and the process is ergodic. Moreover, we have the representation
$$\alpha\left(Y_{t-1}^{-}\right):=\alpha_t=g^{-1}\circ \zeta\left(Y_{t-1}^{-}\right).$$
\end{prop}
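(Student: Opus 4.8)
The plan is to reduce the proposition to Corollary \ref{corexist} by showing that the infinite-memory link $\alpha:=g^{-1}\circ\zeta$ satisfies Assumption \textbf{H3}, and then to identify $(\alpha_t)$ as the unique stationary way of unfolding the recursion in (\ref{OD}).

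First I would solve the affine recursion $g(\alpha_t)=F+Dh(Y_{t-1})+Eg(\alpha_{t-1})$ formally, by iterating it backwards $n$ times to obtain
$$g(\alpha_t)=\sum_{j=0}^{n-1}E^jF+\sum_{j=0}^{n-1}E^jDh(Y_{t-1-j})+E^ng(\alpha_{t-n}).$$
Since $\rho(E)<1$, Gelfand's formula gives $|E^m|\le Cr^m$ for some $r\in(\rho(E),1)$ and a constant $C$, so $\sum_{j\ge0}E^jF=(I-E)^{-1}F$ and, because $h$ is bounded with $M:=\sup_{x\in X}|h(x)|<\infty$, the middle sum converges absolutely to $\sum_{j\ge0}E^jDh(Y_{t-1-j})$; reindexing shows this limit is exactly $\zeta(Y_{t-1}^-)$. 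The remainder $E^ng(\alpha_{t-n})$ is the term to control: for the constructed solution it will vanish by boundedness of the range of $\zeta$, and for an arbitrary stationary solution it will vanish by the tightness argument described below. This yields the candidate $g(\alpha_t)=\zeta(Y_{t-1}^-)$, i.e. $\alpha_t=g^{-1}(\zeta(Y_{t-1}^-))$.

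Next I would check that $\alpha=g^{-1}\circ\zeta$ is a legitimate link for Corollary \ref{corexist}. The same bound $|E^m|\le Cr^m$ and the boundedness of $h$ show that $\zeta$ has bounded range, so $C_0:=\overline{\mathrm{Range}(\zeta)}$ is a compact subset of $K'$ by hypothesis, on which $g^{-1}$ is Lipschitz with some constant $L$; hence $\alpha$ is well defined and measurable. For \textbf{H3}, fix $u\overset{m}{=}v$; since $h(u_{j-1})=h(v_{j-1})$ for $j\le m$, the first $m$ terms of $\zeta(u)-\zeta(v)$ cancel, giving
$$|\alpha(u)-\alpha(v)|\le L\,|\zeta(u)-\zeta(v)|\le 2LM|D|\sum_{j\ge m+1}|E^{j-1}|.$$
Summing over $m$ and exchanging the order of summation turns the right-hand side into $2LM|D|\sum_{j\ge1}j\,|E^{j-1}|$, which is finite because $|E^{j-1}|\le Cr^{j-1}$ and $\sum_j j r^{j-1}<\infty$. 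Thus \textbf{H3} holds, and together with the assumed \textbf{H1}--\textbf{H2} Corollary \ref{corexist} furnishes a strictly stationary, ergodic process $(Y_t)$ with unique law satisfying $\P(Y_t\in G\mid Y_{t-1}^-=y_{t-1}^-)=\mu_{\alpha(y_{t-1}^-)}(G)$.

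Finally I would set $\alpha_t:=\alpha(Y_{t-1}^-)=g^{-1}(\zeta(Y_{t-1}^-))$ and verify directly, using the identity $\zeta(Y_{t-1}^-)=(I-E)^{-1}F+Dh(Y_{t-1})+E\zeta(Y_{t-2}^-)$, that $((Y_t,\alpha_t))$ solves (\ref{OD}); stationarity and ergodicity are inherited since $\alpha_t$ is a fixed measurable factor of a shift of $(Y_t)$. For uniqueness of the joint law, I would take any stationary solution $((Y_t,\alpha_t))$, return to the backward iteration, and remove the remainder: by stationarity $g(\alpha_{t-n})$ is identically distributed in $n$, hence tight, so $|E^ng(\alpha_{t-n})|\le|E^n|\,|g(\alpha_{t-n})|\to0$ in probability; since the remaining terms converge almost surely, the almost sure limit of the remainder coincides with its limit in probability, namely $0$. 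Therefore $g(\alpha_t)=\zeta(Y_{t-1}^-)$ almost surely, so $\alpha_t$ is pinned down by $(Y_t)$, whose law is unique by the corollary, and uniqueness of the joint law follows. The main obstacle is the verification of \textbf{H3}, namely combining the compactness of $\overline{\mathrm{Range}(\zeta)}$ (needed to invoke the local Lipschitz property of $g^{-1}$) with the summability $\sum_j j|E^{j-1}|<\infty$; the tightness step used to discard the remainder in the uniqueness part is the other delicate point.
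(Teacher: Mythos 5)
Your proof is correct and follows essentially the same route as the paper's: reduction to Corollary \ref{corexist} via the link $\alpha=g^{-1}\circ\zeta$, verification of \textbf{H3} from the geometric decay of $\vert E^j\vert$, and elimination of the remainder $E^n g(\alpha_{t-n})$ for an arbitrary stationary solution by a stationarity argument (the paper shows $g(\alpha_0)$ is in fact a bounded random variable via $\P(\vert g(\alpha_0)\vert>M)\leq \P(\kappa_j\vert g(\alpha_0)\vert+v>M)\to 0$ for $M>v$, of which your tightness/convergence-in-probability variant is an equivalent substitute). One small slip: the one-step identity should read $\zeta\left(Y_{t-1}^{-}\right)=F+Dh(Y_{t-1})+E\,\zeta\left(Y_{t-2}^{-}\right)$, without the factor $(I-E)^{-1}$ in front of $F$, since $(I-E)^{-1}F=F+E(I-E)^{-1}F$; with that correction your verification that $\left((Y_t,\alpha_t)\right)_{t\in\Z}$ solves (\ref{OD}) goes through unchanged.
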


\section{Examples of time series models}\label{sec:3}

\subsection{Time series models on the simplex}
Since the seminal work of \citet{aitchison1982statistical}, the study of compositional data has attracted a lot of efforts in the statistical literature. See in particular \citet{paw2011compositional} or \citet{pawlowsky2015modeling} for interesting historical and methodological overviews about the development of this field. Compositional data are vectors taking values in the simplex of $\R^d$, i.e. vectors with non-negative coordinates that sum to one. This kind of data is encountered in many fields such as the analysis of microbiome (\citet{gloor2007microbiome}), household expenditure surveys in economics (\citet{fry2000compositional}) or in ecology (\citet{jackson1997compositional}). The analysis of compositional data requires some care, due to the sum constraint on each vector. As discussed in \citet{aitchison1994principles}, standard multivariate techniques are then inadequate to model these data. Basically, there are too main approaches in the literature for dealing with such data.

The first one is to use some specific probability distributions on the simplex, such as the Dirichlet distribution or the multivariate logistic-normal distribution. See in particular \citet{aitchison1980logistic} for a discussion and a comparison between these two well-known examples. More general classes of probability distributions on the simplex can be found in \citet{aitchison1985general} or in \citet{rayens1994dependence} among others. See also \citet{mateu2021distributions} for a more recent reference.

The second one concerns data transformations. The most popular one is the log-ratio transformation for mapping data from the simplex to the Euclidean space with the same dimension. See \citet{aitchison1986statistical} for a detailed discussion about the advantages of this approach. In particular, assuming that the log-ratio of a composition follows a Gaussian distribution yields to logistic-normal distribution for the original composition. Interestingly, there also exists a notion of geometry, called the Aitchison geometry, which gives a structure of Euclidean space for the simplex. See for instance \citet{pawlowsky2015modeling}, chapter $3$, for an introduction.

Time series analysis of compositional data is much less investigated. A natural approach is to fit VARMA models to log-ratio transformations. See for instance \citet{brunsdon1998time}, \citet{barcelo2011compositional} or \citet{kynvclova2015modeling}. Though quite natural, applying VARMA models to transformed data is interesting to approximate conditional means of log-ratio. As explained in \citet{douma2018analysing} in the context of regression models for continuous proportions in Ecology, this approach can lead to a bias for forecasting original quantities and beta or Dirichlet regressions are discussed as an alternative. Alternatively, some references use probability distributions on the simplex for modeling the conditional distribution of the compositional time series. See for instance the state-space model developed in \citet{grunwald1993time}, the Dirichlet ARMA model in \citet{zheng2017dirichlet} or the spherical autoregressive process considered by \citet{zhu2024spherical} which can used for time series modeling on the simplex.  

In this paper, we only discuss the distributional approach for time series on the simplex following the corresponding part of the literature mentioned above. The problem of using or not the Aitchison geometry, which already occurs in the static case, will not be discussed here. Our aim is simply to show that one can model ecological data in this way, with model parameters than can be easily interpreted to assess the influence of past proportions on the next one. We defer the reader to Section \ref{sec:4} for details.

We keep the notations of Subsection \ref{sec:2.2}.

\subsubsection{Dirichlet model}\label{sec::3.1}
In this part for a vector of positive parameters $ \alpha=\left(\alpha_1,\ldots,\alpha_d\right)$, we denote by $f_{\alpha}$ the density of the Dirichlet distribution with respect to the dominating measure $\xi$ defined in (\ref{doms}), that is 
$$f_{\alpha}(x)=\frac{\prod_{i=1}^d\Gamma(\alpha_i)}{\Gamma(\sum_{i=1}^d \alpha_i)}\prod_{i=1}^d x_i^{\alpha_i-1},\quad x\in [0,1]^d,$$
where the gamma function is defined by 
$$\Gamma(z)=\int_{0}^{\infty}x^{z-1}\exp(-x)dx,\quad z>0.$$
Here, we are interested in model parameters 
$$\phi=\sum_{i=1}^d\alpha_i,\quad \lambda_i=\alpha_i/\phi,\quad 1\leq i\leq d.$$
The vector $\lambda$ is the mean of the distribution while $\phi$ is interpreted as a scale parameter.
Decomposing the parameters in this way is classical. See in particular \citet{douma2018analysing} for regression models with a conditional Dirichlet distribution and \citet{gorgi2021beta} for Beta autoregressive processes (which corresponds to the case $d=2$).
In what follows, we set for $y\in\mathcal{S}_{d-1}$, $h_1(y)=(y_1,\ldots,y_{d-1})'$ and $h_2(y)=-\sum_{i=1}^d y_i\log(y_i)$. 
We are interested here in defining stationary processes $(Y_t)_{t\in\Z}$ solutions of (\ref{condi})

$$\P\left(Y_t\in G\vert Y_{t-1}^{-}\right)=\mu_{\alpha_t}(G),$$
with $\alpha_t=\left(\lambda_t,\phi_t\right)$ and
\begin{eqnarray}\label{Dirichlet1}
\lambda_{i,t}&=&\frac{\exp\left(\mu_{i,t}\right)}{1+\sum_{j=1}^{d-1}\exp\left(\mu_{j,t}\right)},\quad 1\leq i\leq d-1\nonumber\\
\mu_t&=&\left(\mu_{1,t},\ldots,\mu_{d-1,t}\right)'=A_0+\sum_{k\geq 1}A_k h_1(Y_{t-k}),\nonumber\\
\phi_t&=&\exp\left(a_0+\sum_{k\geq 1}a_k h_2(Y_{t-k})\right),\\\nonumber
\end{eqnarray}
for some $A_0\in\R^{d-1}$, $A_k\in \mathcal{M}_{d-1}$, $a_0,a_k\in \R$ for any $k\geq 1$. 

\paragraph{Notes}
\begin{enumerate}
    \item Observe that the model defined above uses a reference coordinate, the coordinate $d$. Using a reference coordinate is unavoidable in our context since $Y_t$ and $\lambda_t$, which represents the conditional mean $\E\left(Y_t\vert Y_{t-1}^{-}\right)$, have components summing to one. Hence we adopt an approach which is similar to logistic/multinomial regression models.
    For autoregressive categorical time series, the same approach has been used by \citet{TF}. See also \citet{FM}. In the context of Ecology, $d$ will play the rule of a reference species.
    \item The Dirichlet distribution is a classical probability distribution on the simplex. It has often been criticized due the extremal independence it imposes on the compositions. See in particular \citet{aitchison1985general}. On the other hand, one nice property of this distribution is that its margins follow beta distributions. In the context of Ecology, it has been suggested in \citet{marquet2017proportional} that the relative abundance of a given species in large ecosystems are often compatible with a beta distribution.  
    
    \item
    Note that in the expression of $\lambda_t$, untransformed lag-values of $Y_t$ are included. This important point will enable us to interpret the model in term of relative risk ratios involving the conditional mean process. See Section \ref{interDir} for details.
    In contrast, the scale parameter $\phi_t$ is defined from the lag-values $h_2(Y_{t-j})$, $j\geq 1$, which represent the Shannon entropy of $Y_{t-j}$ which is seen here as a random probability distribution. This specification is related to applications in Ecology. For a species abundance $y\in \mathcal{S}_{d-1}$, $h_2(y)$ is a good indicator of the diversity in the ecosystem. The higher this quantity will be, the more diversity we will have in the ecosystem.
    \end{enumerate}
    
We have the following result.

\begin{prop}\label{PDirichlet1}
Suppose that 
$$\sum_{k\geq 1}k\left(\vert a_k\vert+\vert A_k\vert\right)<\infty$$
in (\ref{Dirichlet1}). Then there exists a stationary process $(Y_t)_{t\in\Z}$ solution of (\ref{Dirichlet1}). The distribution of such a process is unique
and the process is ergodic.    
\end{prop}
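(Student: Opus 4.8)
The plan is to recognize (\ref{Dirichlet1}) as an instance of the observation-driven construction of Subsection \ref{sec:2.2} and to verify the three hypotheses \textbf{H1}--\textbf{H3}, so that the first part of Corollary \ref{corexist} applies and yields a unique stationary ergodic solution. Here the parameter is $\alpha_t=(\lambda_t,\phi_t)$ and the map $\alpha:\mathcal{S}_{d-1}^{\N}\to K$ is obtained by composing the affine filters $y\mapsto A_0+\sum_{k\geq 1}A_k h_1(y_{k-1})$ and $y\mapsto a_0+\sum_{k\geq 1}a_k h_2(y_{k-1})$ with the softmax map $\mu\mapsto\lambda$ and the exponential $\ell\mapsto\phi=e^{\ell}$ respectively.

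The first step is to control the range of $\alpha$. Both $h_1$ and $h_2$ are bounded on $\mathcal{S}_{d-1}$ (the entries of $h_1$ lie in $[0,1]$ and $h_2$ is the Shannon entropy, taking values in $[0,\log d]$), and $\sum_{k\geq1}\left(\vert A_k\vert+\vert a_k\vert\right)<\infty$ is implied by the hypothesis. Hence the two linear predictors are uniformly bounded, so $\mu_t$ stays in a bounded subset of $\R^{d-1}$ and the exponent defining $\phi_t$ stays in a bounded interval. Because the softmax and the exponential send bounded sets into compact subsets of, respectively, the open simplex and $(0,\infty)$, the corresponding Dirichlet parameters $\alpha_{i,t}=\lambda_{i,t}\phi_t$ remain in a fixed compact subset $K$ of the positive orthant, bounded away from its boundary. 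On such a $K$, \textbf{H1} is immediate: for any $x$ in the interior of the simplex, $\alpha\mapsto f_{\alpha}(x)$ is continuous and strictly positive, so $\inf_{\alpha\in K}f_{\alpha}(x)>0$ on a set of positive $\xi$-measure. Assumption \textbf{H2} follows from the smoothness of the Dirichlet density in its parameters: bounding $f_\alpha(x)-f_{\alpha'}(x)$ by a mean-value argument, the gradient of $f_\alpha$ in $(\lambda,\phi)$ involves the factors $\log x_i$ and digamma terms, whose $\mu_\alpha$-integrals are finite and uniformly bounded over the compact $K$, so that $\int\vert f_\alpha-f_{\alpha'}\vert\,d\xi\leq C\vert\alpha-\alpha'\vert$ and $\alpha\mapsto\mu_\alpha$ is Lipschitz in total variation on $K$.

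The decisive step is \textbf{H3}, and this is where the precise summability hypothesis enters. If $u\overset{m}{=}v$, then $u_{k-1}=v_{k-1}$ for $k\leq m$, so only the lags $k>m$ survive in $\mu(u)-\mu(v)=\sum_{k>m}A_k\left(h_1(u_{k-1})-h_1(v_{k-1})\right)$, and likewise for the $\phi$-predictor. Using the boundedness of $h_1,h_2$ and the local Lipschitz continuity of the softmax and exponential on the bounded range found above, one obtains
$$\sup\left\{\vert\alpha(u)-\alpha(v)\vert:\ u\overset{m}{=}v\right\}\leq C\sum_{k>m}\left(\vert A_k\vert+\vert a_k\vert\right).$$
Summing over $m$ and exchanging the two sums gives $\sum_{m\geq0}\sum_{k>m}\left(\vert A_k\vert+\vert a_k\vert\right)=\sum_{k\geq1}k\left(\vert A_k\vert+\vert a_k\vert\right)$, which is finite by assumption; hence \textbf{H3} holds. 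With \textbf{H1}--\textbf{H3} established, Corollary \ref{corexist} delivers existence, uniqueness and ergodicity of the stationary solution.

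I expect the main obstacle to be the verification of \textbf{H2}, that is, the Lipschitz bound on $\alpha\mapsto\mu_\alpha$ in total variation: it requires differentiating the Dirichlet density in $(\lambda,\phi)$ and checking that the resulting $\log x_i$ and digamma contributions are integrable uniformly over $K$. The compactness of the range of $\alpha$, guaranteed by the boundedness of $h_1,h_2$ together with $\sum_{k}\left(\vert A_k\vert+\vert a_k\vert\right)<\infty$, is what keeps the parameters away from the boundary of the admissible region and makes both \textbf{H1} and \textbf{H2} uniform; everything else is the bookkeeping leading to the summation swap for \textbf{H3}.
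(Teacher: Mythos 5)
Your proposal is correct and follows essentially the same route as the paper: verify \textbf{H1}--\textbf{H3} and invoke the first part of Corollary \ref{corexist}, with the compactness of the range of $\alpha$ coming from the boundedness of $h_1,h_2$, assumption \textbf{H3} from the summation swap $\sum_{m\geq 0}\sum_{k>m}(\vert A_k\vert+\vert a_k\vert)=\sum_{k\geq 1}k(\vert A_k\vert+\vert a_k\vert)$, and \textbf{H2} from the total-variation Lipschitz property of the Dirichlet family on a compact parameter set. The mean-value argument you sketch for \textbf{H2}, including the integrability of the $\log x_i$ factors uniformly over $K$, is exactly what the paper isolates and proves in detail as Lemma \ref{DirichletLip}.
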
    

We now give a specific result for observation driven models as a specific case of (\ref{Dirichlet1}).
The following model specification will be studied.
\begin{eqnarray}\label{Dirichlet2}
\lambda_{i,t}&=&\frac{\exp\left(\mu_{i,t}\right)}{1+\sum_{j=1}^{d-1}\exp\left(\mu_{j,t}\right)},\quad 1\leq i\leq d-1\nonumber\\
\mu_t&=&\left(\mu_{1,t},\ldots,\mu_{d-1,t}\right)'=C+A h_1(Y_{t-1})+B \mu_{t-1},\nonumber\\
\log \phi_t&=& c+a h_2(Y_{t-1})+ b\log \phi_{t-1},\\\nonumber
\end{eqnarray}
where $A,B\in \mathcal{M}_{d-1}$, $C\in \R^{d-1}$ and $(a,b,c)\in\R^3$.
Related to the general formulation given in Subsection \ref{sec:2.2}, 
we set $k=d$ and define the one-to-one mapping $g:\left(\R_+^{*}\right)^k\rightarrow \R^k$
by  
$$g(\alpha)=\left(\mbox{alr }(\lambda),\log(\phi)\right)',$$
where the additive log-ratio transformation, denoted $\mathrm{alr}$, is the bijective application defined by:
	\begin{equation}
	\mathrm{alr} : \begin{array}[t]{l}
	\simp_{d-1} \longrightarrow \R^{d-1} \\
	\lambda \longmapsto \left( \log \left( \dfrac{\lambda_1}{\lambda_d} \right),\dots ,\log \left( \dfrac{\lambda_{d-1}}{\lambda_d} \right) \right).
	\end{array}
	\end{equation}
We will restrict $g$ to a compact subset of the form $K_r=\left\{\alpha\in (\R_+^{*})^d: 1/r\leq \alpha_i\leq r,\quad 1\leq i\leq d\right\}$ for some $r>1$ that will be precised below and we set $K'_r=g(K_r)$.
 Note that $g(\alpha_t)=\left(\mu'_t,\log(\phi_t)\right)'$.
For the discussion given below we define $g_1(\alpha)=\mbox{alr }(\lambda)'$
and $g_2(\alpha)=\log(\phi)$.

We then get the following result about existence of stationary solutions.

\begin{prop}\label{PDirichlet2}
Suppose that $\max\left(\vert b\vert,\rho(B)\right)<1$. There then exists $r>1$ such that the restriction $g$ to $K=K_r$ is one-to-one continuous mapping from $K$ to $K'=g(K)$ and with a Lispchitz continuous inverse. Additionally there exists a stationary process $\left((Y_t,\alpha_t)\right)_{t\in\Z}$ solution of (\ref{Dirichlet2}). The distribution of such a process is unique and the process is ergodic. 
\end{prop}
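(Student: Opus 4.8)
The plan is to recognize (\ref{Dirichlet2}) as a finite-dimensional instance of the observation-driven recursion (\ref{OD}) and then invoke Proposition \ref{ODC}. Stacking the two blocks of (\ref{Dirichlet2}) and recalling that $g(\alpha_t)=(\mu_t',\log\phi_t)'$, I would set
\[
F=\begin{pmatrix}C\\ c\end{pmatrix},\quad
D=\begin{pmatrix}A & 0\\ 0 & a\end{pmatrix},\quad
E=\begin{pmatrix}B & 0\\ 0 & b\end{pmatrix},\quad
h(y)=\begin{pmatrix}h_1(y)\\ h_2(y)\end{pmatrix},
\]
so that (\ref{Dirichlet2}) reads exactly $g(\alpha_t)=F+Dh(Y_{t-1})+Eg(\alpha_{t-1})$. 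Since $E$ is block-diagonal, $\rho(E)=\max(\rho(B),|b|)<1$ under the hypothesis, which supplies the first requirement of Proposition \ref{ODC}.

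Next I would establish the stated properties of $g$. The map $\alpha\mapsto(\lambda,\phi)$ is a smooth reparametrization of $(\R_+^*)^d$, and $(\lambda,\phi)\mapsto(\mathrm{alr}(\lambda),\log\phi)$ is a smooth bijection onto $\R^{d-1}\times\R=\R^d$; hence $g$ is a $C^1$-diffeomorphism from $(\R_+^*)^d$ onto $\R^d$. As its differential is everywhere invertible, $g^{-1}$ is $C^1$ on $\R^d$, hence Lipschitz on every compact subset; in particular the restriction of $g$ to any $K_r$ is one-to-one and continuous with Lipschitz inverse onto $K'_r=g(K_r)$. This already yields the first assertion of the proposition, for every $r>1$, so the specific $r$ is only constrained by the dynamics.

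It then remains to choose $r$ so that the range condition $\overline{\mathrm{Range}(\zeta)}\subset K'_r$ of Proposition \ref{ODC} holds, where $\zeta(y)=(I-E)^{-1}F+\sum_{j\ge1}E^{j-1}Dh(y_{j-1})$. The key observation is that $h$ is bounded: $h_1(y)=(y_1,\dots,y_{d-1})'$ has entries in $(0,1)$ and $h_2(y)=-\sum_i y_i\log y_i$ is the Shannon entropy, lying in $[0,\log d]$. Combined with $\rho(E)<1$ (so that $\|E^{j-1}\|\le C\rho^{j-1}$ for some $\rho<1$ by Gelfand's formula), the series defining $\zeta$ converges absolutely and uniformly in $y$, so $\mathrm{Range}(\zeta)$ is bounded and its closure is a compact subset of $\R^d$. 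Since $g^{-1}$ is continuous, $g^{-1}\bigl(\overline{\mathrm{Range}(\zeta)}\bigr)$ is compact in $(\R_+^*)^d$, hence its coordinates are bounded away from $0$ and $+\infty$; choosing $r>1$ larger than both the supremum and the reciprocal of the infimum of these coordinates places this compact set in the interior of $K_r$, whence $\overline{\mathrm{Range}(\zeta)}\subset g(\mathrm{int}\,K_r)=\mathrm{int}\,K'_r\subset K'_r$.

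Finally I would verify Assumptions {\bf H1--H2} for the Dirichlet family on the compact $K_r$ (as in the proof of Proposition \ref{PDirichlet1}): joint continuity and strict positivity of $f_\alpha(x)$ on the interior of the simplex give $\inf_{\alpha\in K_r}f_\alpha(x)>0$ on the open simplex, a set of full $\xi$-measure ({\bf H1}), while the smooth dependence of $f_\alpha$ on $\alpha$ with $\xi$-integrable dominating derivatives over $K_r$ yields the total-variation Lipschitz bound ({\bf H2}). Proposition \ref{ODC} then delivers a stationary, ergodic process $((Y_t,\alpha_t))_{t\in\Z}$, unique in distribution, with $\alpha_t=g^{-1}\circ\zeta(Y_{t-1}^-)$; unwinding the block structure shows this is precisely a solution of (\ref{condi}) and (\ref{Dirichlet2}), using the algebraic identity $I+E(I-E)^{-1}=(I-E)^{-1}$ to recover $g(\alpha_t)=F+Dh(Y_{t-1})+Eg(\alpha_{t-1})$ from $g(\alpha_t)=\zeta(Y_{t-1}^-)$. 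The main obstacle is the interlocking choice of $r$: one must confirm that the uniform bound on $\zeta$ coming from the boundedness of $h$ and $\rho(E)<1$ translates, through $g^{-1}$, into a uniform two-sided bound on the Dirichlet coordinates $\alpha_{i,t}$, so that the dynamics stay inside a single compact $K_r$ on which {\bf H1--H2} are available.
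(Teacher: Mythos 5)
Your proposal is correct and takes essentially the same route as the paper: the same block-diagonal identification $F=(C',c)'$, $D=\begin{pmatrix}A&0\\0&a\end{pmatrix}$, $E=\begin{pmatrix}B&0\\0&b\end{pmatrix}$ with $\rho(E)=\max(\rho(B),\vert b\vert)<1$ feeding Proposition \ref{ODC}, the choice of $r$ via compactness of $\overline{\mbox{Range }(\zeta)}$ together with continuity of $g^{-1}$, and the reuse of \textbf{H1--H2} as established in the proof of Proposition \ref{PDirichlet1}. Your write-up simply makes explicit some details the paper leaves implicit (the diffeomorphism property of $g$ on $(\R_+^{*})^d$, the boundedness of $h_1$ and $h_2$, and the unwinding of $g(\alpha_t)=\zeta(Y_{t-1}^{-})$ into the recursion), all of which are sound.
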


\paragraph{Note.}

     Let us compare our model specification with that of \citet{zheng2017dirichlet}, a reference in which a similar dynamic is defined for the conditional mean process $(\lambda_t)_{t\in\Z}$. 
    In \citet{zheng2017dirichlet}, the authors are interested by a construction of an ARMA dynamic. The function $h_1$ is given by the alr function whereas our formulation only includes the $d-1$ first untransformed coordinates of $y$. Additionally, their function $g_1$ 
    is defined as the digamma function in order to get $\E\left(h_1(Y_t)\vert Y_{t-1}^{-}\right)=g_1(\alpha_t)$, which is required to define an ARMA type dynamics. In contrast, our function $g_1$ is simply given by the alr function. But our focus here is different since we stated our model to interpret the parameters with relative risks ratio in the spirit of the multinomial regression, as discussed in Subsection \ref{interDir}. The use of untransformed lag values of $Y_t$ and the alr function in the $g$ function is crucial for this. We then believe that the two models are complementary. The ARMA type Dirichlet model is interesting for a prediction purpose. On the other hand, the Dirichlet model we propose is more interesting for interpretability of fitted parameters which is the most important point to get for many time series in Ecology for which the sample size is relatively small.
    
    Note also that the probabilistic techniques used to define stationary models are rather different. In \citet{zheng2017dirichlet}, Markov chains techniques are used. In contrast, our construction of observation-driven models is a particular case of the general infinite memory model (\ref{Dirichlet1}). One advantage of our approach is to be able to define autoregressive processes with a stronger dependence with respect to past values. For instance, in (\ref{Dirichlet1}), one can take $A_j(\ell,\ell')$ of the form $L(\ell,\ell')\cdot j^{-\kappa(\ell,\ell')}$ with 
    $\kappa(\ell,\ell')>2$ for $1\leq \ell,\ell'\leq d-1$, whereas the observation-driven model (\ref{Dirichlet2}) coincides with (\ref{Dirichlet1}) for $A_j=B^{j-1}A$ which automatically means that the parameters are restricted to have a geometric decay. Note that the number of parameters is exactly the same for both models (matrices $(L,\kappa)$ vs matrices $(A,B)$).
    On another hand, the main drawback of our approach is that the link functions $h$ in (\ref{Dirichlet1}) have to be bounded and we cannot deal with the models given in \citet{zheng2017dirichlet}. The main reason is that chains with complete connections have nice properties when the transition kernels remain bounded from below or at least for which the coefficients $b_m$ in (\ref{bm}) can be controlled. The use of unbounded link functions $h$ is then problematic to get similar results. 
    See in particular \citet{chazottes2020optimal} for a discussion about the conditions ensuring existence and uniqueness of a stationary probability measure for chains with complete connections.

\subsubsection{Multivariate logistic-normal autoregressive model}

Another classical probability distribution on the simplex is the multivariate logistic-normal distribution. See \citet{aitchison1980logistic} for the advantages of this probability distribution with respect to the Dirichlet distribution. Here for $\alpha=\left(\mu,\Sigma\right)$ where $\mu\in \R^{d-1}$ and $\Sigma\in \mathcal{M}_{d-1}$ is a covariance matrix.
One can the consider $\alpha$ as an element of $\R^k$ with $k=\frac{(d-1)(d+2)}{2}$. 
\medskip

The multivariate logistic-normal distribution has the following density with respect to the measure $\xi$ defined in (\ref{doms}).
    \begin{equation}
        f_{\mu , \Sigma}(y) = \dfrac{1}{\sqrt{(2\pi)^{d-1}\cdot \det (\Sigma)}}\times \dfrac{1}{\prod_{i=1}^d y_i} \times \exp \left( -\dfrac{1}{2} <\Sigma^{-1} \cdot (\mathrm{alr}(y)-\mu) , (\mathrm{alr}(y)-\mu)> \right).
    \end{equation}
Note that $Y$ is distributed according to this probability if and only if 
$\mbox{alr }(Y)$ follows a multivariate Gaussian distribution with mean $\mu$ and covariance matrix $\Sigma$. 

We are interested here in defining stationary processes $(Y_t)_{t\in\Z}$ solutions of 
\begin{equation}\label{condi3}
\P\left(Y_t\in G\vert Y_{t-1}^{-}\right)=\mu_{\alpha_t}(G)=\int_G f_{\alpha_t}d\xi,\quad G\in \X
\end{equation}
with $\alpha_t=\left(\mu_t,\Sigma_t\right)$ defined by 
\begin{eqnarray}\label{logistic}
\mu_t&=&A_0+\sum_{k\geq 1}A_k h_1(Y_{t-k})\\\nonumber
\Sigma_t&=& \exp\left(\sum_{k\geq 1}a_k h_2(Y_{t-k})\right)\cdot V\\\nonumber
\end{eqnarray}
for some $A_0\in \R^{d-1}$, $A_k\in\mathcal{M}_{d-1}, a_k\in \R$ for any $k\geq 1$ and $V$ is a covariance matrix. With respect to model (\ref{Dirichlet1}), the intercept term $a_0$ is set to $0$ for identifiability issues. 

Setting for $t\in \Z$, $\varepsilon_t=\mbox{alr }(Y_t)-\mu_t$, which conditional distribution with respect to past values is Gaussian with mean $0$, one can also rewrite model as follows.
\begin{equation}\label{mieux}
Y_{i,t}=\frac{\exp\left(\mu_{i,t}+\varepsilon_{i,t}\right)}{1+\sum_{j=1}^{d-1}\exp\left(\mu_{j,t}+\varepsilon_{j,t}\right)},\quad 1\leq i\leq d-1.
\end{equation}

Let us now give the sufficient conditions ensuring existence of a stationary path for (\ref{condi3}), (\ref{logistic}).
\begin{prop}\label{plogistic}
Suppose that 
$$\sum_{k\geq 1}k\left(\vert a_k\vert+\vert A_k\vert\right)<\infty.$$
Then there exists a stationary process $(Y_t)_{t\in\Z}$ solution of (\ref{condi3}) and (\ref{logistic}). The distribution of such a process is unique and the process is ergodic.    
\end{prop}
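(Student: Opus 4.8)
The plan is to obtain Proposition \ref{plogistic} as a direct application of the first point of Corollary \ref{corexist}, following the same route that establishes Proposition \ref{PDirichlet1} for the Dirichlet model. I set $\alpha(y)=(\mu(y),\Sigma(y))$ with $\mu(y)=A_0+\sum_{k\geq 1}A_k h_1(y_{k-1})$ and $\Sigma(y)=\exp\left(\sum_{k\geq 1}a_k h_2(y_{k-1})\right)V$, where the coordinate $y_{k-1}$ of $y\in X^{\N}$ corresponds to the lag $Y_{t-k}$ in $Y_{t-1}^{-}$. The first task is to check that $\alpha$ is well defined, measurable, with relatively compact range. Since $h_1$ takes values in $(0,1)^{d-1}$ and $h_2$ is the Shannon entropy, bounded by $\log d$ on the simplex, both maps are bounded; together with $\sum_{k\geq 1}\vert A_k\vert<\infty$ and $\sum_{k\geq 1}\vert a_k\vert<\infty$ (which follow from the hypothesis), the two series converge, so $\mu(y)$ stays in a ball of radius $R_\mu=\vert A_0\vert+2\sup\vert h_1\vert\sum_k\vert A_k\vert$ and the scalar factor lies in $[e^{-R_s},e^{R_s}]$ with $R_s=\sup\vert h_2\vert\sum_k\vert a_k\vert$. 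Measurability of $\alpha$ follows since it is a pointwise limit of finite sums of measurable maps. I therefore take $K$ to be the compact set $\{(\mu,sV):\vert\mu\vert\leq R_\mu,\ e^{-R_s}\leq s\leq e^{R_s}\}$, embedded in $\R^k$, which contains the range of $\alpha$ and on which every $\Sigma$ is positive definite and bounded away from singularity.

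Next I verify H1 and H2 on this $K$. For H1, the logistic-normal density $f_{\mu,\Sigma}$ is strictly positive and continuous on the open simplex for every $(\mu,\Sigma)$ with $\Sigma$ positive definite; for a fixed interior point $x$ the map $\alpha\mapsto f_\alpha(x)$ is continuous on the compact $K$, so its infimum over $K$ is attained and strictly positive. Hence $\{x:\inf_{\alpha\in K}f_\alpha(x)>0\}$ is the whole interior of the simplex, which has full $\xi$-measure, so H1 holds (and in particular $b_0<1$ by the displayed identity in the excerpt). For H2, the key observation is that $\mathrm{alr}$ is a bijection, so total variation is invariant under this pushforward and $d_{TV}(\mu_{(\mu,\Sigma)},\mu_{(\mu',\Sigma')})=d_{TV}(\mathcal{N}(\mu,\Sigma),\mathcal{N}(\mu',\Sigma'))$. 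On the compact $K$ the Gaussian densities are smooth in $(\mu,\Sigma)$ with partial derivatives uniformly dominated by a fixed integrable function; bounding $d_{TV}\leq\frac12\Vert f_\alpha-f_{\alpha'}\Vert_{L^1}$ and integrating the parameter derivative along the segment joining $\alpha$ and $\alpha'$ then gives a finite Lipschitz constant, which is H2.

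For H3 I use that if $u\overset{m}{=}v$, then $u$ and $v$ agree on their first $m$ coordinates, so only the tail lags $k\geq m+1$ contribute: $\mu(u)-\mu(v)=\sum_{k\geq m+1}A_k[h_1(u_{k-1})-h_1(v_{k-1})]$, whence $\vert\mu(u)-\mu(v)\vert\leq 2\sup\vert h_1\vert\sum_{k>m}\vert A_k\vert$, and, using that $\exp$ is Lipschitz on the bounded interval containing the entropy series, the scalar part of $\alpha(u)-\alpha(v)$ is bounded by $C\sum_{k>m}\vert a_k\vert$. Therefore $\sup\{\vert\alpha(u)-\alpha(v)\vert:u\overset{m}{=}v\}\leq C'\sum_{k>m}(\vert A_k\vert+\vert a_k\vert)$, and exchanging the order of summation gives $\sum_{m\geq0}\sum_{k>m}(\vert A_k\vert+\vert a_k\vert)=\sum_{k\geq1}k(\vert A_k\vert+\vert a_k\vert)$, which is finite by hypothesis; thus H3 holds. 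Corollary \ref{corexist}(1) then yields existence, uniqueness in distribution and ergodicity of the stationary solution, with $\alpha_t=\alpha(Y_{t-1}^{-})$.

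I expect the main obstacle to be H2, the Lipschitz continuity of $\alpha\mapsto\mu_\alpha$ in total variation: although it reduces cleanly to the Gaussian case through the $\mathrm{alr}$ bijection, one must justify the differentiation-under-the-integral bound uniformly over $K$, and this is exactly where compactness of $K$ and the positive lower bound on $\Sigma$ (hence on the scalar $s$) are indispensable, since the Lipschitz constant would degenerate as $\Sigma$ approaches a singular matrix. The remaining verifications (well-definedness, compact range, H1 and H3) are essentially bookkeeping resting on the boundedness of $h_1,h_2$ and the summability hypothesis $\sum_{k\geq1}k(\vert a_k\vert+\vert A_k\vert)<\infty$.
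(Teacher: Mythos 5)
Your proposal is correct and follows essentially the same route as the paper: both apply Corollary \ref{corexist} after confining $\alpha=(\mu,\phi)$ to a compact set via the boundedness of $h_1,h_2$ and the summability of the coefficients, check \textbf{H1} from the uniform positivity of the density over that compact set, and obtain \textbf{H3} by the tail-sum interchange $\sum_{m\geq 0}\sum_{k>m}(\vert A_k\vert+\vert a_k\vert)=\sum_{k\geq 1}k(\vert A_k\vert+\vert a_k\vert)$. The only divergence is in \textbf{H2}: after the same reduction through the alr bijection to Gaussian total variation, the paper's Lemma \ref{LogisticLip} invokes the explicit bound of Proposition 2.1 in \citet{devroye2018total} together with the elementary inequality $x-1-\log x\leq (x-1)^2$, whereas you sketch a differentiation-under-the-integral/mean-value argument with a dominated envelope, which is equally valid on the compact parameter set where the scalar variance factor is bounded away from zero.
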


We now give a specific result when the model specification is given by 
\begin{eqnarray}\label{logistic2}
\mu_t&=&C+A h_1(Y_{t-1})+B\mu_{t-1}\\\nonumber
\Sigma_t&=& \phi_t\cdot V\\\nonumber
\log\phi_t&=&a h_2(Y_{t-1})+b \log\phi_{t-1}.\\\nonumber
\end{eqnarray}
for some $C\in \R^{d-1}$, $A,B \in\mathcal{M}_{d-1}$, $(a,b)\in\R^2$ and $V$ is a covariance matrix.

\begin{prop}\label{plogistic2}
Suppose that $\max\left(\vert b\vert,\rho(B)\right)<1$. Then there exists a stationary process $(Y_t)_{t\in\Z}$ solution of (\ref{condi3}) and (\ref{logistic2}). The distribution of such a process is unique and the process is ergodic.    
\end{prop}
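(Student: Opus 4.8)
The plan is to observe that the observation-driven system (\ref{logistic2}) is a special case of the abstract recursion (\ref{OD}) and then to invoke Proposition \ref{ODC}. Once the fixed matrix $V$ is given, the conditional covariance $\Sigma_t=\phi_t V$ is driven by the single scalar $\phi_t$, so the effective parameter is $\alpha_t=(\mu_t,\phi_t)\in\R^{d-1}\times\R_+^*$; I accordingly set $k=d$, let $\mu_\alpha$ denote the logistic-normal law with mean parameter $\mu$ and covariance $\phi V$, and define $g(\mu,\phi)=(\mu',\log\phi)'$. This $g$ is a bijection from $\R^{d-1}\times\R_+^*$ onto $\R^d$ whose inverse $g^{-1}(u,v)=(u,e^v)$ is smooth, hence Lipschitz on every compact set, as required by Proposition \ref{ODC}.

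With $h=(h_1',h_2)'$, which is bounded on $\simp_{d-1}$ since $h_1\in[0,1]^{d-1}$ and the Shannon entropy $h_2$ is valued in $[0,\log d]$, the recursion (\ref{logistic2}) reads $g(\alpha_t)=F+Dh(Y_{t-1})+Eg(\alpha_{t-1})$ with the block data
$$F=\begin{pmatrix}C\\0\end{pmatrix},\qquad D=\begin{pmatrix}A&0\\0&a\end{pmatrix},\qquad E=\begin{pmatrix}B&0\\0&b\end{pmatrix}.$$
Since $E$ is block diagonal, its spectrum is the union of that of $B$ with $\{b\}$, so $\rho(E)=\max(\rho(B),|b|)<1$ by hypothesis, giving the first condition of Proposition \ref{ODC}. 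Because $h$ is bounded and $\rho(E)<1$, the series $\zeta(y)=(I-E)^{-1}F+\sum_{j\geq1}E^{j-1}Dh(y_{j-1})$ converges normally and $\zeta$ is uniformly bounded; thus $\overline{\mathrm{Range}(\zeta)}$ sits inside a compact box of $\R^d$. I then choose a compact $K\subset\R^{d-1}\times\R_+^*$ (a product of intervals) so that $K'=g(K)$ contains $\overline{\mathrm{Range}(\zeta)}$ in its interior; in particular the last coordinate $\log\phi$ of every $\zeta(y)$ stays in a fixed bounded interval, keeping $\phi_t$ bounded away from $0$ and $+\infty$ and $\mu_t$ in a bounded set.

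It then remains to verify {\bf H1} and {\bf H2} for the logistic-normal family on $K$. For {\bf H1}, as $\alpha$ ranges over the compact $K$ the density $f_{\mu,\phi V}$ is jointly continuous and strictly positive on the interior of $\simp_{d-1}$, hence bounded below by a positive constant on any fixed compact subset of the interior, and such a subset has positive $\xi$-measure; this is the required Doeblin-type lower bound. Assumption {\bf H2}, the $d_{TV}$-Lipschitz dependence of $\mu_{(\mu,\phi)}$ on $(\mu,\phi)$ over $K$, is exactly the total-variation continuity estimate already established for the logistic-normal family in the proof of Proposition \ref{plogistic}, and can be invoked here verbatim. With $\rho(E)<1$, the range condition, and {\bf H1}--{\bf H2} all in force, Proposition \ref{ODC} delivers a unique stationary ergodic process $((Y_t,\alpha_t))_{t\in\Z}$ solving (\ref{condi3})--(\ref{logistic2}), together with the representation $\alpha_t=g^{-1}\circ\zeta(Y_{t-1}^-)$; the marginal $(Y_t)_{t\in\Z}$ is the asserted solution.

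The recasting, the spectral-radius computation, and the boundedness of $\zeta$ are essentially mechanical, so the one genuinely analytic ingredient is {\bf H2}, the total-variation Lipschitz estimate for the logistic-normal family, which does not reduce to boundedness or to the block structure; since that estimate is proved once for Proposition \ref{plogistic}, the present argument is mainly a verification that (\ref{logistic2}) fits the template of Proposition \ref{ODC}. The one point requiring genuine care is the choice of $K$: it must be made so that $\zeta$ keeps $\phi_t$ uniformly bounded away from the degenerate values $0$ and $\infty$, for otherwise both {\bf H1} and the Lipschitz property of $g^{-1}$ could break down at the boundary.
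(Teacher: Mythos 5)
Your proposal is correct and follows essentially the same route as the paper's own proof: you recast (\ref{logistic2}) as an instance of (\ref{OD}) with $g(\mu,\phi)=(\mu',\log\phi)'$ and the block-diagonal $D=\begin{pmatrix} A&0\\0&a\end{pmatrix}$, $E=\begin{pmatrix} B&0\\0&b\end{pmatrix}$ so that $\rho(E)=\max\left(\vert b\vert,\rho(B)\right)<1$, check that $\overline{\mbox{Range }(\zeta)}$ lies in a compact box whose $g$-preimage keeps $\phi$ away from $0$ and $\infty$, and import {\bf H1}--{\bf H2} from the proof of Proposition \ref{plogistic} (i.e.\ Lemma \ref{LogisticLip}), exactly as the paper does before invoking Proposition \ref{ODC}. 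The only differences are cosmetic and in your favor: taking $F=(C',0)'$ matches (\ref{logistic2}) (which has no intercept in the $\log\phi_t$ recursion) more faithfully than the paper's $F=(C',c)'$, and your verification of {\bf H1} spells out the positive lower bound on a compact subset of the interior of $\simp_{d-1}$ that the paper leaves as ``easy to check.''
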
    
    
\subsection{A time series model on $[0,1]^d$}

Proportional data with spatial correlations such as shrub covers measured at different locations are also common in Ecology.
See for instance \citet{eskelson2011estimating} or \citet{feng2015regression}.
Typically, these multivariate data take values in the unit cube $X=[0,1]^d$.
When $d=1$, we already defined in Section \ref{sec::3.1} some models using a beta
conditional distribution. When $d>1$, one can try to use multivariate versions of the beta distribution as a prototype for the conditional distribution of our model. One of the simplest example of a multivariate versions of the binomial distribution has been studied in \citet{olkin2003bivariate}. 
In this case for $\alpha=\left(\alpha_1,\ldots,\alpha_{d+1}\right)$, a vector of positive real number, $\mu_{\alpha}$ denotes the probability distribution of a random vector $\left(Z_i/(Z_i+Z_{d+1})\right)_{1\leq i\leq d}$ where $Z_1,\ldots,Z_{d+1}$ are independent random variables following a Gamma distribution with respective parameters $\alpha_1,\ldots,\alpha_{d+1}$. We refer to \citet{olkin2003bivariate} for a precise expression of the corresponding probability density. See also \citet{arnold2011flexible} for a more flexible version of the multivariate binomial distribution. For conciseness, we only consider the simplest one.

Examples of generalized linear dynamics can be 
\begin{equation}\label{infmem}
\alpha_t=\exp\left(A_0+\sum_{j=1}^{\infty}A_j Y_{t-j}\right)
\end{equation}
where $A_0$ is a vector of $\R^{d+1}$ and for $j\geq 1$, $A_j$ is a matrix of size $(d+1)\times d$. In (\ref{infmem}), the exponential function is applied coordinatewise.

A more parsimonious model can be obtained as in (\ref{OD}) with 
\begin{equation}\label{infmem2}
\log \alpha_t=F+D Y_{t-1}+E\log\alpha_{t-1},
\end{equation}
where $F$ is a vector of $\R^{d+1}$, $D$ and $E$ are matrices of respective sizes $(d+1)\times d$ and $(d+1)\times (d+1)$ and the log function is also applied coordinatewise.

\begin{prop}\label{cube}
Suppose that $\sum_{j\geq 1}j\Vert A_j\Vert<\infty$ in (\ref{infmem}) or $\rho(E)<1$ in (\ref{infmem2}).
There then exists a stationary process $\left((Y_t,\alpha_t)\right)_{t\in\Z}$ solution of (\ref{condi})  and (\ref{condiprim}) or (\ref{OD}) respectively. The distribution of such a process is unique and is ergodic.    
\end{prop}

\paragraph{Note.} A general alternative approach to construct multivariate distributions with beta marginals is to use copula. In this case $\mu_{\alpha}$ is given by the distribution of the random vector $\left(F_{\alpha_1}^{-1}(U_i)\right)_{1\leq i\leq d}$
where for $\alpha_i=(a_i,b_i)$, $F_{\alpha_i}$ denotes the cdf of the beta distribution with parameters $(a_i,b_i)$ and $(U_1,\ldots,U_d)$ is a probability distribution on $[0,1]^d$ with uniform marginals (i.e. a copula). This is approach is used for instance in \citet{eskelson2011estimating} for defined multivariate regresssion models with beta marginals. Showing that stationary models (\ref{infmem}) and (\ref{infmem2}) can be defined in this context requires to check Assumptions {\bf H1-H2} for the corresponding measure $\mu_{\alpha}$. This will dependent on the copula model and we will not provide a general result here.

\subsection{von Mises-Fisher autoregressive process on the sphere}

Here we consider $X=\mathcal{S}^2$, the unit sphere of $\R^3$. For $\kappa\geq 0$ and $v$ a unit vector in $\R^3$ (for the Euclidean norm),
let $f_{\kappa,v}$ be the Fisher-von Mises density w.t.r. to the Lebesgue measure $\xi$ on the sphere, i.e.
$$f_{\kappa,v}(x)=C(\kappa)\exp\left(\kappa <v,x>\right),\quad C(\kappa)=\frac{\kappa}{4\pi \sinh(\kappa)}.$$
Note that this distribution can be extended to the unit sphere $\mathcal{S}^n$ of $\R^{n+1}$ for any positive integer $n$. In this case, $<x,y>$ still denotes the canonical scalar product between two vectors $x,y\in\R^n$ and $C(\kappa)$ has a more complex expression depending on the modified Bessel function of the first kind. 

See \citet{dhillon2003modeling} for basic properties of von Mises-Fisher distributions. 

The total variation between two distributions with fixed $\kappa$  is bounded by 
$$\frac{C(\kappa)}{2}\int \left\vert \exp\left(\kappa<v,x>\right)- \exp\left(\kappa<v',x>\right)\right\vert \xi(dx)\leq \frac{C(\kappa)\kappa e}{2}S_2 \Vert v-v'\Vert,$$
where $S_2=4\pi$ is the surface of the sphere. It is then Lipschitz w.r.t. the mean direction $\mu$ and Assumption {\bf H2} is then satisfied. Since the density is uniformly bounded from below, {\bf H1} is also automatically satisfied.

In what follows, we denote by $\mathcal{R}_{z,\theta}$ the rotation of axis $z$ and angle $\theta$.
We construct a time series $(Y_t)_{t\in\Z}$ on the sphere as follows. We assume that the conditional density of $Y_t$ given $Y_{t-1},Y_{t-2},\ldots$ is given by $f_{\kappa,v_t}$ where
\begin{equation}\label{specif}
v_t=v\left(Y_{t-1},Y_{t-2},\ldots\right).
\end{equation}
A simple construction of a mapping $\mu$ can be obtained starting with a rotation axis $s$ on the sphere and setting 
$$v\left(Y_{t-1},Y_{t-2},\ldots\right)=g_{1,Y_{t-1}}\circ\cdots \circ g_{k, Y_{t-k}}(s),$$
with $g_{i,y}(u)=\mathcal{R}_{u,\theta_i}y$, that is we start with a rotating $Y_{t-k}$ around $s$ with angle $\theta_k$ and then rotate $Y_{t-k+1}$
around this new axis and so on. 
We consider the case of an infinite number of composition, 
\begin{equation}\label{infi}
v\left(Y_{t-1},Y_{t-2},\ldots\right)=\lim_{k\rightarrow \infty}g_{1,Y_{t-1}}\circ\cdots \circ g_{k, Y_{t-k}}(s).
\end{equation}
A natural observation-driven model is obtained if we assume that $v_t=g_{1,Y_{t-1}}(v_{t-1})$.
We have the following result

\begin{prop}\label{sphere2}
Suppose that $\sum_{k=1}^{\infty}8^{k/2}\sqrt{\prod_{i=1}^k\left(1-\cos(\theta_i)\right)}<\infty$. There then exists a process $(Y_t)_{t\in\Z}$, taking values on the unit sphere, such that 
$$\P\left(Y_t\in A\vert Y_{t-1},Y_{t-2},\ldots\right)=\int_A f_{\kappa,v_t}(y)\nu(dy).$$
Moreover, the distribution of such process is unique.    
\end{prop}

\paragraph{Notes}
\begin{enumerate}
\item
Theorem $1$ applies for instance if $\theta_i\rightarrow 0$ as $i\rightarrow \infty$ but it is not necessary. One can also build a model of the form (\ref{infi}) such that 
\begin{equation}\label{infi2}
v_t=g_{Y_{t-1}}\left(v_{t-1}\right),\quad t\in\Z,
\end{equation}
where $g_{y}(v)=\mathcal{R}_{v,\theta}(y)$, and $\cos(\theta)>7/8$. 
Indeed in this case, one can show that $\mu_t$ coincides with (\ref{infi}) with $g_{i,y}=g_y$ for all $i\geq 1$ and Proposition \ref{sphere} applies. 
\item 
There exist several contributions about time series analysis on the sphere. \citet{zhu2024spherical} is an interesting recent contribution which extends some previous attempts to define regression models on the sphere.
Their construction is based on the Fisher-Rao geometry and on the difference $L:=x_2\ominus x_1$ between two non-colinear vectors of the unit sphere $\mathcal{S}^n$ of $\R^{n+1}$. The square matrix $L$ is skew-symmetric and the exponential $\exp(L)$ is the rotation that rotates ounterclockwise $x_1$ to $x_2$ in the plane generated by the two vectors. \citet{zhu2024spherical} defined an AR process for $Y_t\ominus m$ where $m$ is the mean of $Y_t$. In our setup, on can use this formulation to define 
$$v_t\ominus s=\Gamma+\sum_{j=1}^{\infty}\theta_j \left(Y_{t-j}\ominus m\right),$$
where $\Gamma$ is a skew-symmetric matrix, $s$ and $m$ are two given vectors on the sphere and the $\theta_i$'s are real coefficients.
Equivalently $v_t=\mathcal{R}_t s$ with 
$$\mathcal{R}_t=\exp\left(\Gamma+\sum_{j\geq 1}\theta_j\left(Y_{t-j}\ominus m\right)\right).$$
It is possible to show that if $\sum_{j\geq 1}j \vert \theta_j\vert<\infty$, then we get a stationary solution for these recursions.
Note that this approach can be used for any $n-$dimensional unit sphere.

\end{enumerate}

\section{Estimation for models on the simplex and interpretation}\label{sec:4}

In this section, we provide some inference procedures for the parameters of the models, assuming that a realization of $Y_1,\ldots,Y_n$ is available. 
We will now denote by $\theta$ the vector of unknown parameters in our models.

\subsection{The Dirichlet model}

\subsection{Interpretation of parameters for the Dirichlet model }\label{interDir}

We now give an interpretation of the parameters for model (\ref{Dirichlet1}). 
We first recall that if $Y$ is a random vector distributed according to the Dirichlet distribution with scale parameter $\phi$ and mean parameters $\lambda_i$, $1\leq i\leq d$, then $\sum_{i=1}^d \mbox{Var }(Y_i)=\frac{1-\sum_{i=1}^d \lambda_i^2}{\phi+1}$.
Then an increase of $\phi$ leads to less individual variability for the proportions.
For the autoregressive model (\ref{Dirichlet1}), $h_2(Y_{t-k})$ is a positive quantity representing the Shannon entropy of the proportions $Y_{t-k}$.
In the context of Ecology, this non-negative quantity, which is minimal for a corner of the simplex and maximal for the uniform distribution, represents the disorder in the biodiversity at time $t-k$. See for instance \citet{chakrabarti2010maximum} or \citet{vranken2015review} and the references therein for an overview of the use of Shannon entropy to measure the biodiversity in ecosystems.
If a coefficient $a_k$ is positive, then the most diverse is the population at time $t-k$, the less variable will be the different proportions at time $t$. 

 We next give an interpretation of matrices $A_k$ in (\ref{Dirichlet1}). 
Consider the conditional means ratio at time $t$ between two species $i$ and species $j$ different from the reference species $d$:

\begin{equation}\label{MR}
MR\left(i,j,Y_{t-1}^{-}\right) := \dfrac{\lambda_{i,t}}{\lambda_{j,t}}= \exp\left(\sum_{k\geq 1}\left[A_k(i,\cdot)-A_k(j,\cdot)\right]h_1(Y_{t-k})\right),
\end{equation}
where $A_k(i,\cdot)$ and $A_k(j,\cdot)$ denote respectively the $i^{\mathrm{th}}$ and the $j^{\mathrm{th}}$ line of matrix $A_k$.
Then for a given species $\ell =1,\ldots,d-1$ and $k\geq 1$, the sign of $A_k(i,\ell)-A_k(j,\ell)$  indicates if the proportions $Y_{t-k,\ell}$ has a positive or negative impact on the log-ratio $\log MR\left(i,j,Y_{t-1}^{-}\right)$. 
However, if we want to get a precise measure of the impact of one proportion on the future values, one can introduce a notion of perturbation. The notion of perturbation we present here is different from the standard one on the simplex and is adapted to our model formulation. See \citet{aitchison2005role} for the standard multiplicative perturbation method on the simplex. 
Suppose now that the abundance is modified at time $t-1$. Set $Z_{t-1}=Y_{t-1}+\gamma$ for some $\gamma\in\R^d$ such that $\sum_{i=1}^d\gamma_i=0$.
Then $h_1(Z_{t-1})=h_1(Y_{t-1})+h_1(\gamma)$. 
The evolution in the means ratio is then given by
\begin{align}\label{EMR}
		EMR (i,j,\gamma)  & := \dfrac{MR\left(i,j,Z_{t-1},Y_{t-2}^{-}\right)}{MR\left(i,j,Y_{t-1}^{-}\right)} \notag \\
		 & = \exp \left(\left[A_1(i,\cdot)-A_1(j,\cdot)\right]h_1(\gamma)\right).
\end{align}
Note that if $j=d$, one can also define the EMR quantity by
$$EMR (i,d,\gamma)=\exp \left(\left[A_1(i,\cdot)\right]h_1(\gamma)\right).$$
Suppose now that for two species $i$ and $j$, the quantity $\left[A_1(i,\cdot)-A_1(j,\cdot)\right]h_1(\gamma)$ is positive (or $A_1(i,\cdot)h_1(\gamma)>0$ if $j=d$). Then $EMR(i,j,\gamma)>1$ and
the ratio between the two conditional means will be larger at time $t$ under the new configuration at time $t-1$. The new configuration of the abundance at time $t-1$ will then promote species $i$ compared to species $j$ at time $t$. On the contrary, a negative value of this last quantity would mean that the new configuration promotes species $j$ compared to species $i$.

The quantities EMR introduced above are similar to the relative risk ratio used in multinomial regression.
See \citet{hilbe2009logistic}, p. $385$ for details. The single difference is that the conditional probabilities are simply replaced with the conditional means of some variables on the simplex. 

Next, let us give a more specific perturbation that will be used in our real data application. Let us increase the proportion the species of reference $d$ by $p$ percent, at the expense of species $i$ and $j$, with the perturbation
	\begin{equation}
	\gamma=\left(0, \dots ,0, \underset{\underset{i}{\uparrow}}{-c \cdot p},0,\dots ,0,\underset{\underset{j}{\uparrow}}{(c-1)\cdot p},0,\dots, 0 ,\underset{\underset{d}{\uparrow}}{p}\right)'.
	\end{equation}
	
Here, $c \in [0;1]$ is an indicator on how the species are affected by the perturbation. A value $c=0$ means that only species $j$ is affected, while $c=\dfrac{1}{2}$ means that both species $i$ and $j$ are equally affected by the perturbation.
The evolution in the means ratio will be given by the sign of $c\left(A_1(i,j)+A_1(j,i)-A_1(i,i)-A_1(j,j)\right)+A_1(j,j)-A_1(i,j)$.

Note that with such a model, even if we modify the abundance of only two species, there are repercussions on all the species in the ecosystem, which seems quite realistic. Note that we have now precise quantities to determine if a given perturbation will more affect a species $i$ with respect to another species $j$.

\paragraph{Notes.}
\begin{enumerate}
\item
In model (\ref{Dirichlet2}), the matrix $A$ has exactly the same interpretation as the matrix $A_1$ in model (\ref{Dirichlet1}), since the perturbation $\gamma$ will not affect $\mu_{t-1}$.
\item
Mathematically, it is also possible to define a perturbation occurring at lag $t-k$ with $k\geq 2$. Setting $Z_{t-k}=Y_{t-k}+\gamma$, we then recover the same interpretation for the differences $A_k(i,\cdot)-A_k(j,\cdot)$, $1\leq i,j\leq d-1$, appearing in the corresponding EMR index which can still be interpreted as a sensitivity measure with respect to past proportions. However, interpreting the impact of this perturbation on future values can be spurious since a perturbation occurring at time $t-k$ will also impact the values of $Y_{t-j}$,  $1\leq j\leq k-1$. A solution to measure the impact of a perturbation at time $t-k$ on many future values 
is to simulate several paths of the perturbed and the non-perturbed model and to estimate for $\ell\geq 2$, the ratio of conditional means
$$\frac{\E\left[Y_{i,t-k+\ell}\vert Z_{t-k},Y_{t-k-1}^{-}\right]\cdot \E\left[Y_{j,t-k+\ell}\vert Y_{t-k}^{-}\right]}{\E\left[Y_{i,t-k+\ell}\vert Y_{t-k}^{-}\right]\cdot \E\left[Y_{j,t-k+\ell}\vert Z_{t-k},Y_{t-k-1}^{-}\right]}.$$
Note that for $\ell=1$, the previous ratio coincides with the EMR index defined above.
\end{enumerate}

\subsubsection{Statistical inference of the Dirichlet model}

In this part, we are interested in estimating the parameters $\theta=\left(C',\mbox{vec}(A)',\mbox{vec}(B)',c,a,b\right)'$ in model (\ref{Dirichlet2}) or 
$$\theta=\left(A_0',\mbox{Vec}(A_1)',\ldots,\mbox{Vec}(A_p)',a_0,\ldots,a_p\right)'$$
in model (\ref{Dirichlet1}) when a finite-order autoregression (i.e. $A_j=0$ for $j\geq p+1$) is assumed.
For another parameter vector $\widetilde{\theta}$, we will denote by 
by $\alpha_t(\widetilde{\theta}),\lambda_t(\widetilde{\theta}),\mu_t(\widetilde{\theta}),\phi_t(\widetilde{\theta})$ the latent processes at time $t$ computed by replacing $\theta$ by $\widetilde{\theta}$ in their expression.
For instance,
$$\mu_t(\widetilde{\theta})=\widetilde{C}+\widetilde{A} h_1(Y_{t-1})+\widetilde{B} \mu_{t-1}(\widetilde{\theta}),\quad t\in\Z.$$
in (\ref{Dirichlet2}).

The first natural idea is the conditional maximum likelihood estimator.
For any $t \in \Z$, the density of $Y_{t+1}$ conditionally to $Y_t^-$ is given by
$$f_{\alpha_t(\theta)}(y)=\dfrac{\Gamma (\phi_t(\theta))}{\Gamma(\alpha_{1,t}(\theta))\cdots \Gamma(\alpha_{d,t}(\theta))} \prod_{i=1}^{d} y_i^{\alpha_{i,t}(\theta)-1},$$
for $y\in\mathcal{S}_{d-1}$.
We then define the conditional maximum likelihood estimator by 
\begin{equation}\label{EMV}
\hat{\theta}_{ML}=\arg\max_{\widetilde{\theta}\in \Theta}\frac{1}{n}\sum_{t=1}^n \log f_{\alpha_t(\widetilde{\theta})}(Y_t).
\end{equation}
where $\Theta$ is a subset of $\R^{\ell}$ with $\ell$ being the number of unknown parameters in the model. In model (\ref{Dirichlet2}), it is necessary 
to initialize the values of $\alpha_0(\widetilde{\theta})$ (by $0$ for instance). Assuming that $A_j=0$ for $j\geq p+1$ in model (\ref{Dirichlet1}),
one can modify (\ref{EMV}) by starting the summation at time $t=p+1$ and it is not necessary to initialize the latent processes.

Getting consistency and asymptotic normality of $\hat{\theta}_{ML}$ is quite classical when $\Theta$ is a compact subset of $\R^{\ell}$ and such that $\rho(\widetilde{B})<1$ for all $\widetilde{\theta}\in \Theta$ in the case of model (\ref{Dirichlet2}). See in particular \citet{debaly2023multivariate}, a reference  which provides general assumptions for consistency and asymptotic normality of conditional pseudo-maximum likelihood estimators for multivariate autoregressive time series. 
An additional assumption is however necessary for identification in model (\ref{Dirichlet2}). For $\widetilde{\theta}\in \Theta$, the equalities 
		\begin{equation}\label{identif}
			\forall k \in \N^{*}, \widetilde{B}^k \cdot A = B^k \cdot A,
		\end{equation}
should imply the equality $\widetilde{B}=B$.
See Section $3.4$ in \citet{debaly2023multivariate} for the meaning of this condition.

We next present an alternative method which is suitable to first estimate the parameters in the conditional mean $\lambda_t(\theta)$. Let us first decompose the vector of parameters $\theta$ as $\theta=\left(\theta_1,\theta_2\right)$ where 
$$\theta_1=\left(A_0',\mbox{Vec}(A_1)',\ldots,\mbox{Vec}(A_p)'\right)$$
denotes the parameters of the conditional mean. Note that 
$\lambda_t(\theta)=\lambda_t(\theta_1)$.
We then define 
\begin{eqnarray}\label{convexity}
\hat{\theta}_1&=&\arg\min_{\widetilde{\theta}_1\in \Theta_1}\frac{1}{n}\sum_{t=1}^n m_t\left(\widetilde{\theta}_1\right)\\\nonumber
m_t\left(\widetilde{\theta}_1\right)&=&-\sum_{i=1}^dY_{i,t}\log\lambda_{i,t}\left(\widetilde{\theta}_1\right).
\end{eqnarray}

Note that (\ref{convexity}) corresponds to the (opposite of the) conditional likelihood estimator for multinomial categorical responses. For the Dirichlet model, this estimator is a simple minimum of contrast estimator. It has however some interesting properties given below.

\begin{prop}\label{prop:4.8}
\begin{enumerate}
\item
We have $\E m_0\left(\widetilde{\theta}_1\right)\geq \E m_0\left(\theta_1\right)$ and the equality holds true if and only if $\lambda_0\left(\widetilde{\theta}_1\right)=\lambda_0\left(\theta_1\right)$ a.s.
\item
Suppose that model (\ref{Dirichlet1}) is $p-$Markov (i.e. $A_j=0$ for $j\geq p+1$). Then the mapping 
$$\widetilde{\theta}_1\mapsto \frac{1}{n}\sum_{t=1}^n m_t\left(\widetilde{\theta}_1\right)$$
is convex.
\end{enumerate}
\end{prop}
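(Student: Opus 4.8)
The plan is to treat the two assertions separately. For the first, the idea is that $m_0(\widetilde{\theta}_1)$ is a cross-entropy whose conditional expectation, given the infinite past, differs from that of $m_0(\theta_1)$ by a Kullback--Leibler divergence between two points of the simplex. I would first condition on $Y_{-1}^{-}$. Since $\lambda_{i,0}(\widetilde{\theta}_1)$ is a measurable function of $Y_{-1}^{-}$, and since under the true parameter $\lambda_0(\theta_1)$ is the conditional mean of $Y_0$ (because $\lambda_i=\alpha_i/\phi$ is the mean of the Dirichlet distribution $\mu_{\alpha_0}$), we have $\E\left(Y_{i,0}\mid Y_{-1}^{-}\right)=\lambda_{i,0}(\theta_1)$ for $1\leq i\leq d$. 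Hence
$$\E\left(m_0(\widetilde{\theta}_1)\mid Y_{-1}^{-}\right)=-\sum_{i=1}^d \lambda_{i,0}(\theta_1)\log\lambda_{i,0}(\widetilde{\theta}_1),$$
and subtracting the same identity for $\theta_1$ gives
$$\E\left(m_0(\widetilde{\theta}_1)-m_0(\theta_1)\mid Y_{-1}^{-}\right)=\sum_{i=1}^d \lambda_{i,0}(\theta_1)\log\frac{\lambda_{i,0}(\theta_1)}{\lambda_{i,0}(\widetilde{\theta}_1)}.$$
The right-hand side is the Kullback--Leibler divergence between the probability vectors $\lambda_0(\theta_1)$ and $\lambda_0(\widetilde{\theta}_1)$, both valued in $\mathcal{S}_{d-1}$; by Gibbs' inequality it is non-negative and vanishes exactly when $\lambda_0(\widetilde{\theta}_1)=\lambda_0(\theta_1)$. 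Taking expectations yields $\E m_0(\widetilde{\theta}_1)-\E m_0(\theta_1)\geq 0$, with equality if and only if this non-negative random variable is null a.s., that is $\lambda_0(\widetilde{\theta}_1)=\lambda_0(\theta_1)$ a.s.

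For the second assertion, I would first use the constraint $\sum_{i=1}^d Y_{i,t}=1$ to simplify each summand. Writing $\log\lambda_{i,t}=\mu_{i,t}-\log\left(1+\sum_{j=1}^{d-1}e^{\mu_{j,t}}\right)$ for $1\leq i\leq d-1$ and $\log\lambda_{d,t}=-\log\left(1+\sum_{j=1}^{d-1}e^{\mu_{j,t}}\right)$, the normalizing terms collapse against $\sum_{i=1}^d Y_{i,t}=1$ and one obtains
$$m_t(\widetilde{\theta}_1)=-\sum_{i=1}^{d-1}Y_{i,t}\,\mu_{i,t}(\widetilde{\theta}_1)+\log\left(1+\sum_{j=1}^{d-1}e^{\mu_{j,t}(\widetilde{\theta}_1)}\right).$$

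The key point, and the only place where the $p$-Markov hypothesis is used, is that in model (\ref{Dirichlet1}) with $A_j=0$ for $j\geq p+1$ the process $\mu_t(\widetilde{\theta}_1)=A_0+\sum_{k=1}^p A_k h_1(Y_{t-k})$ is an affine function of $\widetilde{\theta}_1$, the lagged observations $h_1(Y_{t-k})$ being fixed data; this is in contrast with model (\ref{Dirichlet2}), whose recursion $\mu_t=C+Ah_1(Y_{t-1})+B\mu_{t-1}$ destroys affineness in the parameters. The first term above is then affine, hence convex, in $\widetilde{\theta}_1$. The second term is the composition of the log-sum-exp map $z\mapsto\log\left(1+\sum_{j=1}^{d-1}e^{z_j}\right)$, which is convex on $\R^{d-1}$, with the affine map $\widetilde{\theta}_1\mapsto\mu_t(\widetilde{\theta}_1)$; a convex function composed with an affine map is convex. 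Thus each $m_t(\widetilde{\theta}_1)$ is convex, and $\frac1n\sum_{t=1}^n m_t(\widetilde{\theta}_1)$ is convex as a non-negative combination of convex functions.

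Neither step is a serious obstacle. The points requiring care are, for the first part, the verification that $\lambda_0(\theta_1)$ is exactly the conditional mean under the true model and the strict form of Gibbs' inequality needed to handle the equality case; and, for the second part, the convexity of the log-sum-exp function (which one can confirm by computing its Hessian, of the form $\mathrm{diag}(\lambda)-\lambda\lambda'$, or by invoking the standard result) together with being explicit that affineness of $\mu_t$ in the parameters fails outside the finite-order case, which is precisely why the $p$-Markov hypothesis is invoked.
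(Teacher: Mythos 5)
Your proposal is correct and follows essentially the same route as the paper's proof: part 1 via the conditional mean identity $\E\left(Y_{i,0}\mid Y_{-1}^{-}\right)=\lambda_{i,0}(\theta_1)$ and the strict Gibbs/Kullback--Leibler inequality, and part 2 via the simplification using $\sum_{i=1}^d Y_{i,t}=1$, the affineness of $\widetilde{\theta}_1\mapsto\mu_t(\widetilde{\theta}_1)$ under the $p$-Markov assumption, and the concavity of $u\mapsto\sum_{i=1}^{d-1}Y_{i,t}u_i-\log\bigl(1+\sum_{j=1}^{d-1}e^{u_j}\bigr)$. The only cosmetic difference is that the paper verifies the log-sum-exp concavity by an explicit Hessian computation, whereas you invoke the standard result (while correctly noting the Hessian form $\mathrm{diag}(v)-vv'$ as a check), and you make the conditioning step in part 1 slightly more explicit than the paper does.
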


\paragraph{Notes}
\begin{enumerate}
\item
The first point of Proposition \ref{prop:4.8} is helpful to get identification of the parameters in the conditional mean process. Since the conditional distribution $Y_t\vert Y_{t-1},Y_{t-2}\ldots$ is continuous, the equivalence 
$$\lambda_0\left(\widetilde{\theta}_1\right)=\lambda_0\left(\widetilde{\theta}_1\right)\Longleftrightarrow \widetilde{\theta}_1=\theta_1$$
can then be obtained directly in the Markov case and from condition (\ref{identif}) for model (\ref{Dirichlet2}.
\item
For the Markov model, convexity of this second objective function is particularly interesting numerically with respect to conditional likelihood. In our numerical experiments, we noticed that minimization algorithms for likelihood estimation are very sensitive to the choice of the initial values of the parameters to optimize.
Then this alternative procedure is interesting for two reasons. First, one be only interested in estimating $\theta_1$ for prediction. Secondly, one can use this second method to initialize the likelihood estimation procedure. This is the approach chosen in our numerical experiments.
\end{enumerate}

Whatever the method used, standard conditions (as explained above) guaranty the asymptotic normality of the estimator, i.e. 
$$\sqrt{n}\left(\hat{\theta}-\theta\right)\Longrightarrow \mathcal{N}_{\ell}\left(0,V_{\theta}\right),$$
with an usual sandwich form for the asymptotic variance
$$V_{\theta}=\E\left(\Ddot{m}_0(\theta)\right)^{-1}\mbox{Var }\left(\Dot{m}_0(\theta)\right)\E\left(\Ddot{m}_0(\theta)\right)^{-1},$$
with $m_0(\theta)=-\log f_{\alpha_0(\theta)}(Y_0)$ and $\Dot{m}_0$, $\Ddot{m}_0$ denote respectively the gradient and the Hessian matrix of the mapping $m_0$.
The same kind of asymptotic property holds true for the second minimum of contrast estimator defined in (\ref{convexity}).

\subsection{The multivariate logistic-normal autoregressive model}

 \subsubsection{Interpretation on model parameters for the multivariate logistic-normal autoregressive models}
 Interpretation of model parameters is quite similar with respect to the Dirichlet model but with a few variations.
First the sign of the $a_k'$s indicate if the conditional variance of $\log(Y_{i,t}/Y_{d,t})$ is positively or negatively affected by lag-values of the Shannon entropy. On the other hand, the mean value of these log-ratio will be positively or negatively impacted by the previous proportions depending on the signs of the entries of the matrices $A_k$, $k\geq 1$. On the other hand, 
one can still give a more precise interpretation of the first matrix $A_1$ as for the Dirichlet model. First, assume that $a_k=0$ for any $k\geq 1$. In this case the $\left(\varepsilon_{i,t}\right)_{1\leq i\leq d-1}'$s given in (\ref{mieux}) are i.i.d. $\mathcal{N}_{d-1}(0,V)$ distributed. For a perturbation $Z_{t-1}=Y_{t-1}+\gamma$, one can then define similarly to (\ref{MR}) and (\ref{EMR}), the ratio
$$R\left(i,j,Y_t^{-}\right)=\frac{Y_{i,t}}{Y_{j,t}}=\exp\left(\sum_{k\geq 1}[A_k(i,\cdot)-A_k(j,\cdot)]h_1(Y_{t-k})+\varepsilon_{i,t}-\varepsilon_{j,t}\right)$$
and the associated relative risk ratio by
\begin{equation}\label{RRR}
RRR\left(i,j,\gamma\right)=\frac{R\left(i,j,Y_t,Z_{t-1},Y_{t-1}^{-}\right)}{R\left(i,j,Y_t^{-}\right)}=\exp\left(\left[A_1(i,\cdot)-A_1(j,\cdot)\right]h_1(\gamma)\right).
\end{equation}
The main difference with respect to the Dirichlet model is that the ratio between conditional means is replaced with the ratio of the original variables, but the interpretation is similar.

When the coefficients $a_k$ are not necessarily equal to $0$, the previous interpretation is no more valid since the conditional variance of $\varepsilon_t$ is also affected by the perturbation. We then define an alternative risk measure by 
\begin{eqnarray}\label{altRRR}
\Delta LR (i,j,\gamma)&=&\E\left[\log \left( \dfrac{Y_{t,i}}{Y_{t,j}} \right) \bigm\mid Z_{t-1}, Y_{t-2}^{-}\right]- \E \left[\log \left( \dfrac{Y_{t,i}}{Y_{t,j}} \right) \bigm\mid Y_{t-1}^{-}\right]\\\nonumber
	 &=&\left[A_1(i,\cdot)-A_1(j,\cdot)\right]h_1(\gamma),
	 \end{eqnarray}
where $Z_{t-1}=Y_{t-1}+\gamma$. As for the EMR quantity defined for the Dirichlet model, a positive or negative $\Delta LR (i,j,\gamma)$ indicates that the species $i$ in the ecosystems will promoted or not with respect to the perturbation $\gamma$.

\subsubsection{Estimation for multivariate logistic-normal autoregressive model}

As in the previous subsection, we are here interested in estimating the parameters\\ $\theta=\left(C',\mbox{Vec }(A)',\mbox{Vec }(B)',a,b,\mbox{Vec }(V)\right)$ in model (\ref{logistic}) or 
$$\theta=\left(A_0',\mbox{Vec }(A_1)',\ldots,\mbox{Vec }(A_p)', a_1,\ldots,a_p,\mbox{Vec }(V)\right)$$
in model (\ref{logistic2}).

Likelihood estimation is here also relevant. Using the probability density of the multivariate logistic-normal distribution, one can show that the MLE can be defined by 
\begin{eqnarray}\label{QMLE}
\hat{\theta}^{MLE}&=&\arg\min_{\widetilde{\theta}\in \Theta}\frac{1}{n}\sum_{t=1}^n m_t\left(\widetilde{\theta}\right),\nonumber\\
m_t\left(\widetilde{\theta}\right)&=&\left(\mbox{alr}(Y_t)-\mu_t\left(\widetilde{\theta}\right)\right)'\Sigma_t\left(\widetilde{\theta}\right)^{-1}\left(\mbox{alr}(Y_t)-\mu_t\left(\widetilde{\theta}\right)\right)+\log \det\left(\Sigma_t\left(\widetilde{\theta}\right)\right).
\end{eqnarray}
Note that (\ref{QMLE}) is similar the Gaussian QMLE used for ARCH processes. 
See for instance \citet{francq2019garch} for a detailed presentation of this kind of quasi-maximum likelihood estimation for conditionally heteroscedastic autoregressive models.
Since the parameters in the model can be decomposed into $\theta=\left(\theta_1,\theta_2\right)$ where $\theta_1$
is the vector of parameters only involved in the latent process $\mu_t$, one can also define the following least-squares estimator for estimating parameter $\theta_1$.
\begin{equation}\label{OLS}
\hat{\theta}^{LS}=\arg\min_{\widetilde{\theta}_1\in \Theta_1}\frac{1}{n}\sum_{t=1}^n\left\vert \mbox{alr}(Y_t)-\mu_t\left(\widetilde{\theta}_1\right)\right\vert^2,
\end{equation}
where $\vert\cdot\vert$ denotes the Euclidean norm.
Estimator (\ref{OLS}) is justified by (\ref{mieux}), since $\mbox{alr}(Y_t)$ has conditional mean $\mu_t$. Then estimator (\ref{OLS}) can be used to initialize likelihood optimization.
\paragraph{Note.}
For the Markov model, one can show that $\hat{\theta}_1^{LS}=\hat{\theta}_1^{MLE}$, a classical property occurring when estimating VAR type models.   
Estimation methods for VAR models can be found in \citet{lutkepohl2005new}, Chapter $3$.
The estimation of $\theta_2$ can be done in a second step with
$$\hat{\theta}_2=\arg\min_{\widetilde{\theta}_2\in \Theta_2}\frac{1}{n}\sum_{t=1}^n m_t\left(\hat{\theta}_1^{LS},\widetilde{\theta}_2\right).$$

Note that the second vector of parameters $\theta_2$ contains the coefficients of a positive definite matrix $V$. To define a suitable parameter space $\Theta_2$, one can use Cholesky decomposition, i.e. $V=LL'$ with $L$
is a lower triangular matrix with positive diagonal elements.

\section{Numerical experiments}\label{sec:5}

Here, we give a short simulation study to assess the 
good finite-sample properties of the estimators for the Dirichlet model. To this end, we use a simplified setup
with only one lag and with parameters values that are close to the ones found in the real data experiments.
We fix $d=3$ and set $a_0=1.5$, $a_1=0.7$, $A_0=\begin{pmatrix}-1\\-2\end{pmatrix}$ and $A_1=\begin{pmatrix} 4&3\\3&5\end{pmatrix}$.

\begin{table}
	\centering
	\begin{tabular}{cccccccccc}
	\hline
	Sample size & Estimator &$a_0$&$a_1$&$A_0(1)$&$A_0(2)$&$A_1(1,1)$&$A_1(1,2)$&$A_1(2,1)$&$A(2,2)$ \\
	\hline
	$n=100$ & Likelihood & $0.28$ & $0.43$ & $0.84$& $1.04$&$0.94$&$1.02$&$1.13$&$1.23$\\
	$n=500$ & Likelihood & $0.12$ & $0.17$ & $0.3$&$0.31$&$0.35$&$0.39$& $0.34$&$0.4$\\
	$n=100$ & Likelihood & $-$ & $-$ & $0.78$&$1.03$&$0.83$&$0.9$&$1.08$&$1.17$\\
	$n=100$ & Convex & $-$ & $-$ & $1.69$&$1.94$&$1.72$&$1.87$&$1.98$&$2.16$\\
	$n=500$& Likelihood & $-$&$-$& $0.3$&$0.33$&$0.32$&$0.36$&$0.36$&$0.38$\\
	$n=500$& Convex& $-$&$-$& $0.51$& $0.61$&$0.57$&$0.68$&$0.66$&$0.77$\\
	\hline
	\end{tabular}
	\caption{RMSE for the two estimators computed with $1000$ realizations. For the four last lines, the dispersion parameter is set to $\phi_t=2$ and is not estimated and the convex criterion is used for initializing the likelihood.\label{RMSE}}
\end{table}

In Table \ref{RMSE}, the RMSE are given for the maximum likelihood estimators when $n=100$ and $n=500$.
For the two first lines, the parameters in the conditional mean were initialized with the minimum of the convex contrast (\ref{convexity}). 
To compare the performances of the likelihood estimator and the estimator given in (\ref{convexity}), we used a set up with the same number of parameters to be estimated.
In this case, latent dispersion parameter is set to $\phi_t=2$ and is assumed to be known, while the parameters $A_0$ and $A_1$ given above were still used for the conditional mean. The four last lines of Table \ref{RMSE} show that the RMSE of the maximum likelihood estimators is always smaller than that of the estimators minimizing the convex contrast, even for the sample size $n=100$. However, the second estimator is very useful to initialize the non-convex optimization procedure for computing the MLE.      

Estimator (\ref{convexity}) could also have an advantage
when using penalized methods such as LASSO for estimating a large number of parameters when $d>>1$. This is outside the scope of this paper but estimating high dimensional models of this type could be an interesting perspective for future research.

\section{Application on a real data set}\label{sec:6}

In this section, we apply our theoretical results on a real data set, which comes from the collection BioTIME (see \citet{biotime2018} for details). BioTIME registers a vast amount of time series relative to the abundance of species, which have been collected all around the world, through multiple environmental studies.

\medskip

The data set upon which we will work is about a population of alpine birds during 38 years \emph{(from 1964 to 2001)} at a specific location in Scandinavia (see \citet{svensson2006species} for details). Forty-seven species are present in this study, but for simplification purpose, we will focus on three particular species : Anthus pratensis, Calcarius lapponicus and Oenanthe oenanthe, which are always present in large number, from one year to another.

Even if it reduces largely the spectrum of species, we will consider an ecosystem only formed by these three species. One can find in Figure \ref{birds} the graphics of relative abundance for these species from the year $1964$ to the year $2001$.

\begin{figure}[H]
	\centering
	\includegraphics[scale=0.5]{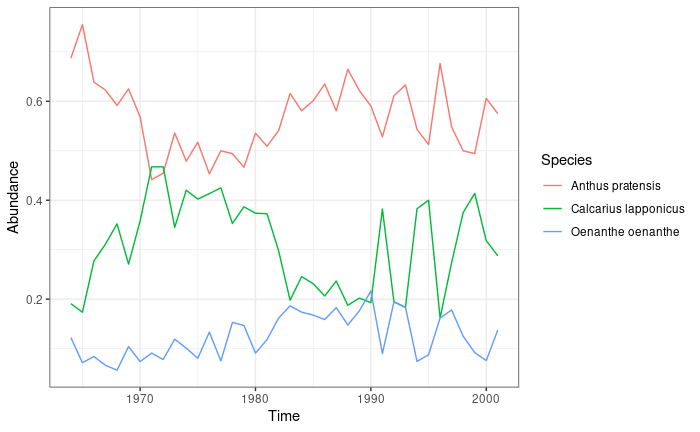}
	\caption{Abundance of Scandinavian Birds \label{birds}}
\end{figure}

%%%% MODIFICATION 23/01/23

We make here the assumption that the abundance of our three species of birds is an ergodic process $(Y_t)_{t \in \Z}$ valued in $\simp_{d-1}$ following the Dirichlet model defined in (\ref{Dirichlet1}) with $d=3$. The second logistic-normal model will only be presented for the interpretation of the parameters, since it led to a more extreme, yet similar, conclusion.
% We will not consider the second logistic-normal model which did not give a very different interpretation for this data set.

%%%% FIN MODIFICATION

The third species is used as the species of reference.
Due to the very short time series length $n=38$, we only use one lag to model the dynamic of this time series (i.e. $A_j$ and $a_j$ are set to $0$ for $j \geqslant 2$). The numbers of parameters in the model is $\ell=8$.

\subsection{Estimation of the parameters}\label{sec:6.1}

%%%% MODIFICATION 23/01/23

We begin by computing the minimum of the convex contrast (\ref{convexity}) and we only estimate $A_0$ and $A_1$. We found the estimates:
% \begin{equation}
% 	\hat{A}_1 = \left( \begin{array}{cc}
% 	3.98 & 3.15 \\
% 	2.81 & 4.70
% 	\end{array}\right) \quad \text{and} \quad
% 	\hat{A}_0 = \left( \begin{array}{c}
% 	-1.71 \\
% 	-2.13
% 	\end{array}\right).
% \end{equation}
\begin{equation}
	\hat{A}_1 = \left( \begin{array}{cc}
	2.73 & 1.22 \\
	0.84 & 4.00
	\end{array}\right) \quad \text{and} \quad
	\hat{A}_0 = \left( \begin{array}{c}
	-1.60 \\
	-2.27
	\end{array}\right).
\end{equation}
We next computed the maximum of conditional likelihood, using the previous estimates for initialization. We obtain the estimates:
	% \begin{equation}
	% 	\hat{A}^{MLE}_1 = \left( \begin{array}{cc}
	% 4.68 & 3.43 \\
	% 3.83 & 5.13
	% \end{array}\right), \quad
	% \hat{A}^{MLE}_0 = \left( \begin{array}{c}
	% -2.21 \\
	% -2.87
	% \end{array}\right) \quad \text{and} \quad
	% \begin{pmatrix} \hat{a}_0^{MLE}\\\hat{a}_1^{MLE}\end{pmatrix} = \left( \begin{array}{c}
	% 1.34 \\
	% 0.65
	% \end{array}\right).
	% \end{equation}
 	\begin{equation}
		\hat{A}^{MLE}_1 = \left( \begin{array}{cc}
	2.82 & 1.66 \\
	0.68 & 3.45
	\end{array}\right), \quad
	\hat{A}^{MLE}_0 = \left( \begin{array}{c}
	-1.70 \\
	-2.14
	\end{array}\right) \quad \text{and} \quad
	\begin{pmatrix} \hat{a}_0^{MLE}\\\hat{a}_1^{MLE}\end{pmatrix} = \left( \begin{array}{c}
	2.34 \\
	1.68
	\end{array}\right).
	\end{equation}
Standard errors for these estimates are not provided. They can easily be obtained from parametric bootstrap.
However, due to the very short length of the time series all the results requiring an asymptotic validity should be taken with caution.

\medskip

Let us now give an interpretation of these results in accordance with Subsection \ref{interDir}. The coefficient %$\hat{a}_1^{MLE}=0.65$%
$\hat{a}_1^{MLE}=1.68$ is positive and means that the more diversity there is in our ecosystem at some time $t$, the more stable the abundance will be at time $t+1$.
In order to interpret the coefficients in $\hat{A}_1^{MLE}$ we assume that the reference species $3$ increases its abundance by $p$ percents, at the expense of species $1$ and $2$:
	\begin{equation}
	z_t = y_t + (-c \cdot p, (c-1) \cdot p , p).
	\end{equation}
with $c \in [0;1]$ indicating on how the two first species are affected by the previous perturbation of the ecosystem.

We compute
	% \begin{equation}
	% \begin{array}{ll}
	% \dfrac{MR(1,2,z_t)}{MR(1,2,y_t)} & =\exp \left(p \left( c\left( \hat{A}_{12}+\hat{A}_{21}-\hat{A}_{11}-\hat{A}_{22}\right) \right) + \hat{A}_{22}-\hat{A}_{12}\right) \\
	%  & = \exp(p(-2.55 c +1.7))
	%  \end{array},
	% \end{equation}

 	\begin{equation} \label{eq:6.1.4}
	\begin{array}{ll}
	\log \left(\dfrac{MR(1,2,z_t)}{MR(1,2,y_t)}\right) & =p \left( c\left( \hat{A}_{12}+\hat{A}_{21}-\hat{A}_{11}-\hat{A}_{22}\right) \right) + \hat{A}_{22}-\hat{A}_{12}\\
	 & = p(-3.93 c +1.78)
	 \end{array}.
	\end{equation}

Let us recall that if this last quantity is greater than zero, then the perturbation is more beneficial to species 1 than it is to species 2.

\medskip

In Figure \ref{meanratio}, we draw the curve of (\ref{eq:6.1.4}) as a function of $c$ for different values of $p$.

\begin{figure}[H]
	\centering
        \begin{subfigure}[b]{0.45\textwidth}
	\includegraphics[scale=0.5]{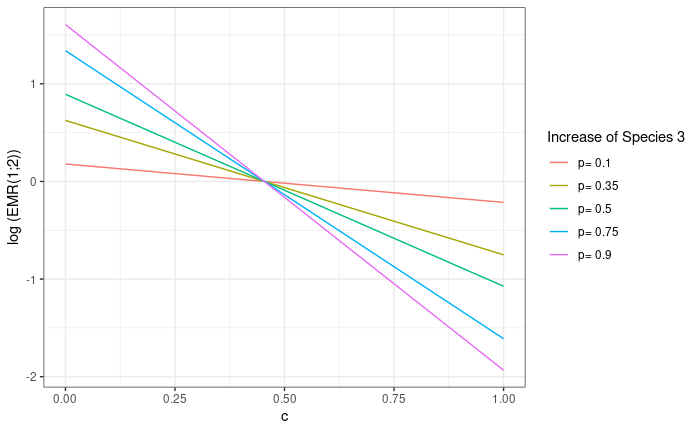}
	\caption{Evolution of the expected means ratio, Dirichlet model \label{meanratio}}
        \end{subfigure}
        \hfill
        \begin{subfigure}[b]{0.45\textwidth}
            \includegraphics[scale=0.5]{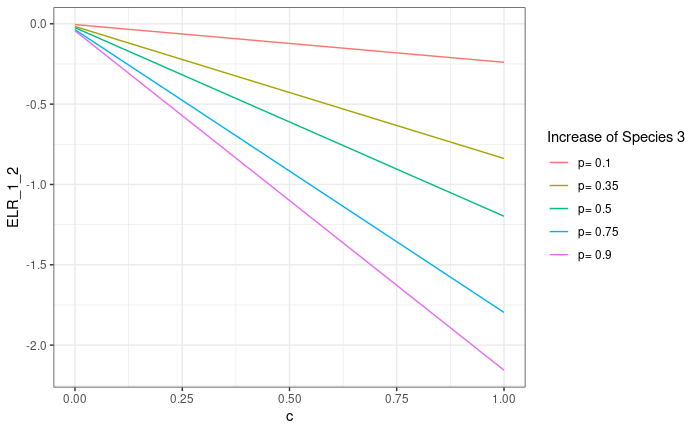}
	\caption{Evolution of the  expected log-ratio, logistic-normal model \label{logratio}}
        \end{subfigure}
    \caption{Evolution of the expected means ratio and log-ratio between species 1 and 2}
\end{figure}

When $c=0$, meaning that only the second species is affected by the perturbation, one can see that the new ecosystem will benefit to species 1 compared to species 2. It is the contrary if $c=1$.

\medskip

% Besides, it is interesting to consider what is happening when $c = \dfrac{1}{2}$. Indeed, the two first species are equally affected by the perturbation in this case. This scenario occurs for example if we introduce a certain number of individuals of species 3 in the ecosystem, without interfering with the other species.

% We can see that in this case, the evolution of the means ratio is greater than one, which means that the first species is advantaged by this new configuration compared to species 2. We can thus deduce that species 2 and 3 are more in competition in the ecosystem than species 1 and 3 are.

% \medskip

% Finally, for all four curves in Figure (\ref{meanratio}), there is no evolution in the means ratio for a value of $c$ greater than $\dfrac{1}{2}$, meaning that if we want to introduce some individuals of the last species without modifying the balance between the two other species, the first species should be more impacted by the perturbation than the second one. Therefore, we would have to remove some individuals of the first species to conserve the balance, because we introduced more competition for the second species.

It is interesting to take a look at the <<equilibrium point>>, i.e. when the ratio is equal to $0$. One can see on Figure \ref{meanratio} that this point is reached for a value of $c < \dfrac{1}{2}$, meaning that in order to conserve the same balance between species 1 and 2, it is necessary that species 1 should be less impacted by the perturbation. It therefore indicates that increasing the proportion of species 3 is more beneficial to species 2 than it is to species 1.

\medskip

A similar study with the logistic-normal model produced results in accordance with the Dirichlet model, even if the interpretation of the parameters is more radical. Indeed, for the logistic-normal model, we obtain a risk measure given by
    \begin{equation}
        \Delta LR (1,2,\gamma) = p \times (-2.35 c - 0.05),
    \end{equation}
which is negative for any $p \in [0,1]$ (see Figure \ref{logratio}). 

With this model, we would conclude that an increase of species 3 would always be beneficial to species 2 compared to species 1, regardless of how this increase would initially impact species 1 and 2. The Dirichlet model is more nuanced in the sense that if species 2 suffers too hard from the initial perturbation, species 1 will benefit from it.

\medskip

All of these interpretations should however be taken with caution, due to the shortness of our time series, and a larger data set would be necessary to confirm our findings.
%%%% FIN MODIFICATION

\subsection{Prediction}\label{sec:6.2}

We end this section by providing some guidelines for forecasting future abundances from the fitted Dirichlet model. Once again, our analysis is more illustrative for this small sample data set. 

To do so, we estimate the parameters of the model  with the maximum of conditional likelihood computed for the first thirty values \emph{(from 1964 to 1993)}.

\medskip

With the estimation $\hat{\theta}^{MLE}$ obtained, we perform our previsions on the last eight values \emph{(from 1994 to 2001)} and compare them to the true values. Since the model is parametric, it is simply necessary to simulate $b$ realizations of $\left( Z_t \right)_{31 \leqslant t \leqslant 38}$ using the Dirichlet model with the estimated parameters and with initial value $Z_{30}=Y_{30}$. We then compute the mean values of these realizations as well as 
the empirical quantiles of order $\alpha/2$ and $1-\alpha/2$ for $\alpha=5\%$. Of course, it is expected that the mean obtained in this way approximated the values of $\lambda_t(\hat{\theta})$ and then $\E\left(Y_t\vert Y_{30}\right)$ for $t=1,\ldots,8$.
The number of simulations is set to $b=10.000$.
The results are stored in Table \ref{prediction}.

\begin{table}[H]
	\centering
	\begin{tabular}{ccccc}
	\hline
	Year & Real values & Predicted values & Quantile 2.5\% & Quantile 97.5 \% \\
	\hline
	1994 & 0.543 & 0.617& 0.489 & 0.736\\
	1995 & 0.513 & 0.604 & 0.469 & 0.739\\
	1996 & 0.676 & 0.597 & 0.456 & 0.738\\
	1997 & 0.548 & 0.591 & 0.451 & 0.736\\
	1998 & 0.500 & 0.586 & 0.441 & 0.730\\
	1999 & 0.494 & 0.584 & 0.440 & 0.730\\
	2000 & 0.606 & 0.582 & 0.436 & 0.727\\
	2001 & 0.575 & 0.581 & 0.440 & 0.724\\
	\hline
	\end{tabular}
	\caption{Abundance of Anthus pratensis from 1994 to 2001 \label{prediction}}
\end{table}

\begin{table}[H]
	\centering
	\begin{tabular}{ccccc}
	\hline
	Year & Real values & Predicted values & Quantile 2.5\% & Quantile 97.5 \% \\
	\hline
	1994 & 0.383 & 0.219 & 0.125 & 0.333\\
	1995 & 0.400 & 0.242 & 0.128 & 0.376\\
	1996 & 0.162 & 0.256 & 0.132 & 0.399\\
	1997 & 0.274 & 0.265 & 0.136 & 0.414\\
	1998 & 0.375 & 0.271 & 0.138 & 0.425\\
	1999 & 0.414 & 0.276 & 0.141 & 0.430\\
	2000 & 0.318 & 0.279 & 0.143 & 0.439\\
	2001 & 0.288 & 0.281 & 0.146 & 0.442\\
	\hline
	\end{tabular}
	\caption{Abundance of Calcarius lapponicus from 1994 to 2001}
\end{table}

\begin{table}[H]
	\centering
	\begin{tabular}{ccccc}
	\hline
	Year & Real values & Predicted values & Quantile 2.5\% & Quantile 97.5 \% \\
	\hline
	1994 & 0.074 & 0.164 & 0.061 & 0.268\\
	1995 & 0.088 & 0.154 & 0.061 & 0.275\\
	1996 & 0.162 & 0.148 & 0.055 & 0.271\\
	1997 & 0.178 & 0.144 & 0.053 & 0.265\\
	1998 & 0.125 & 0.143 & 0.051 & 0.264\\
	1999 & 0.092 & 0.141 & 0.052 & 0.260\\
	2000 & 0.076 & 0.140 & 0.051 & 0.260\\
	2001 & 0.138 & 0.138 & 0.050 & 0.256\\
	\hline
	\end{tabular}
	\caption{Abundance of Oenanthe oenanthe from 1994 to 2001}
\end{table}
%%%% FIN MODIFICATION

In Figure \ref{Scandi} below, we plot in full line the observed values of abundance, and in dashed line the predicted values. The ribbons symbolize the confidence intervals.

% \begin{figure}[H]
% 	\centering
% 	\includegraphics[scale=0.5]{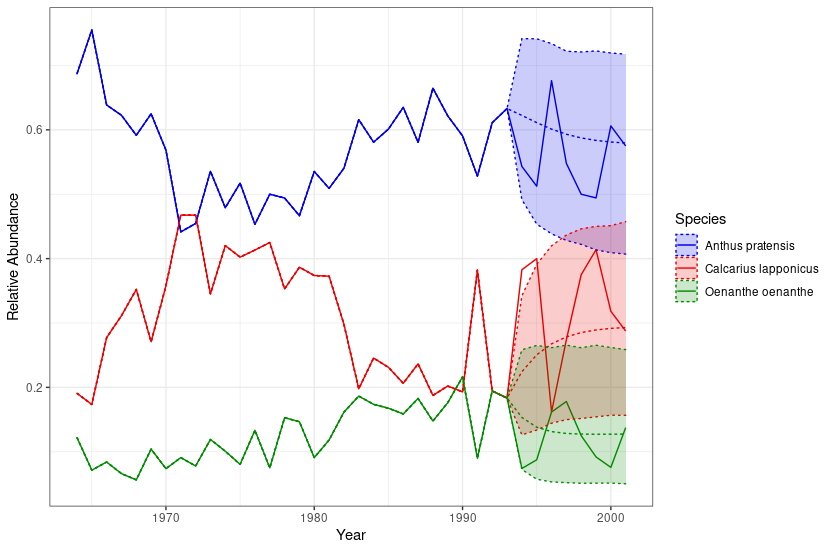}
% 	\caption{Prevision of the abundance for Scandinavian Birds \label{Scandi}}
% \end{figure}

\begin{figure}[H]
	\centering
	\includegraphics[scale=0.5]{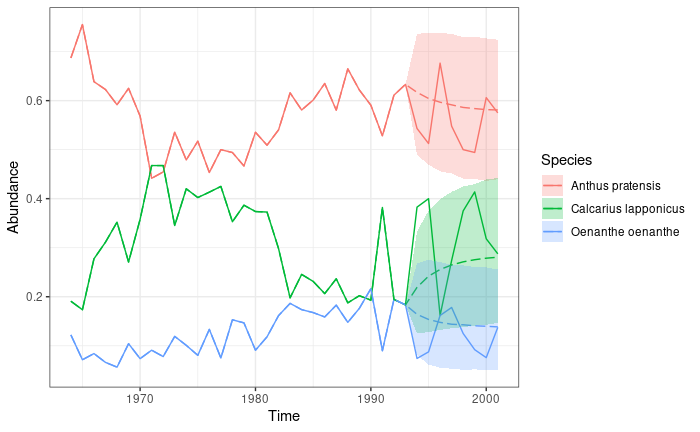}
	\caption{Prevision of the abundance for Scandinavian Birds, Dirichlet model \label{Scandi}}
\end{figure}

The first remark we can make about Figure \ref{Scandi} is that our previsions capture quite well the trend of the three abundances. Naturally, discrepancies occur due to the large variability of the observed time series.

Nevertheless, the observed time series stays in the confidence region, except for two consecutive of the species Calcarius lapponicus, showing that our model could apply well to this data set.

\section{Conclusion}\label{sec:7}

In this paper, we proposed a general probabilistic approach to construct stationary time series models on bounded state spaces. 
Many examples of time series models can be obtained in that way on the simplex, the sphere of the hypercube, with quite simple conditions to check on the model. As a specific case, we considered two models with a Dirichlet or a multivariate logistic-normal conditional distribution for modeling time series on the simplex. These two models have interesting interpretations, especially for studying the dynamic of relative abundance in Ecology. Efficient estimation procedures have also been presented. 

A further step for exploring this kind of models would be to consider additional probability distributions for which our assumptions are valid. For instance, on the simplex, the generalized Liouville distributions investigated in \citet{rayens1994dependence}
is one possibility since they offer a richer modeling than the simple Dirichlet distribution. However, interpretation of the parameters of these distributions could be more tricky. Copula-based models on the hypercube with beta marginals could be also interesting to investigate and there also exist other interesting probability distributions on the sphere. 

One of the challenging issues would be also to relax Assumption {\bf H3} and to consider latent processes $\alpha_t$ depending on unbounded functions of the response (such as the logit of lag-values for time series on the unit interval). This extension is generally much harder to obtain. See \citet{fernandez2005chains} for various general results which guaranty existence and uniqueness of stationary chains with complete connections. 

Finally, the use of (stationary) exogenous covariates in this kind of models is of practical interest but the probabilistic guarantees ensuring existence of stationary solutions are limited. \citet{truquet2020coupling} provided some guarantees for categorical time series.

\section{Proofs of the results}\label{sec:8}

\subsection{Proof of Proposition \ref{ODC}}

The result will be a consequence of Corollary \ref{corexist}. 
We first show that $\left((Y_t,\alpha_t)\right)_{t\in\Z}$ is a stationary solution of (\ref{OD}) if and only if $(Y_t)_{t\in \Z}$
is a stationary solution of 
\begin{equation}\label{equi}
\P\left(Y_t\in A\vert Y_{t-1}^{-}\right)=\mu_{\alpha(Y_{t-1}^-)}(A),
\end{equation}
with $\alpha(y)=g^{-1}\circ \zeta(y)$ for $y\in X^{\N}$.
Note that since $\rho(E)<1$ and $h$ is a bounded mapping, it is straightforward to show that $\overline{\mbox{Range }(\zeta)}$ is a compact subset of $K'$. Indeed, there exist $(C,\beta)\in \R_+\times (0,1)$ such that $\kappa_j:=\vert E^j\vert\leq C\beta^j$ for any $j\geq 1$ and $\mbox{Range }(\zeta)$ is a bounded subset of $\R^k$. 
Moreover, from the Lipschitz properties of $g^{-1}$, the mapping $\alpha$ defined above satisfies {\bf H3} and is is immediate to show that $b_m=O(\beta^m)$.

Suppose first that $\left((Y_t,\alpha_t)\right)_{t\in\Z}$ is a stationary solution of (\ref{OD}). We are going to show that $g(\alpha_t)$ is a bounded random variable. We have for any $j\geq 2$
\begin{equation}\label{precis}
g(\alpha_t)=E^j g(\alpha_{t-j})+\sum_{s=1}^{j-1}E^{s-1}\left(F+D h(Y_{t-s})\right).
\end{equation}
This leads to the bound $\left\vert g(\alpha_t)\right\vert\leq \kappa_j \left\vert g(\alpha_{t-j})\right\vert+v$, where 
$$v=\sum_{s\geq 1}\kappa_s\left(\vert F\vert+\vert D\vert\cdot \sup_{y\in X}\vert h(y)\vert\right).$$
From stationarity, we then obtain for $M>v$,
$$\P\left(\vert g(\alpha_0)\vert>M\right)\leq \P\left(\kappa_j\vert g(\alpha_0)\vert+v>M\right)$$
and the latter probability goes to $0$ if $j\rightarrow \infty$.
This shows that $g(\alpha_t)$ is bounded. From (\ref{precis}), by letting $j\rightarrow \infty$, we deduce that $g(\alpha_t)=\zeta\left(Y_{t-1}^{-}\right)$ a.s. and then $\alpha_t=g^{-1}\circ \zeta\left(Y_{t-1}^{-}\right)$ a.s. 
We conclude that the process $(Y_t)_{t\in\Z}$ is a stationary solution of (\ref{equi}). 

Conversely, suppose that $(Y_t)_{t\in\Z}$ is a stationary solution of (\ref{equi}). Setting $\alpha_t=\alpha(Y_{t-1}^{-})$, the process $\left((Y_t,\alpha_t)\right)_{t\in\Z}$ is clearly stationary. Moreover, it is not difficult to get the recursive equations
$$g(\alpha_t)=F+Dh(Y_{t-1})+Eg(\alpha_{t-1}),\quad t\in \Z.$$
We then conclude that the process $\left((Y_t,\alpha_t)\right)_{t\in\Z}$ is a stationary solution of (\ref{OD}).

From Corollary \ref{corexist}, we conclude that there exists a stationary solution to (\ref{OD}) and that the distribution of such process is unique. Since the solution of (\ref{equi}) is ergodic, we also deduce that the process $\left((Y_t,\alpha_t)\right)_{t\in\Z}$ is ergodic and the proof is complete.

\hfill \qedsymbol

\subsection{Proof of Proposition \ref{PDirichlet1}}

The result will follow from an application of Corollary \ref{corexist}. We have to check Assumptions {\bf H1-H3}. 
Let us define a compact $K$ of the form $[1/L,L]^{d+1}$ for some $L>1$ such that $\alpha(y)=(\lambda(y),\phi(y))\in K$ for any $y\in X^{\N}$ with 
$$\phi(y)=\exp\left(a_0+\sum_{k\geq 1}a_k h_2(y_k)\right),\quad \mbox{alr }(\lambda(y))=\mu(y),$$
$$\mu(y)=A_0+\sum_{k\geq 1} A_k y_k.$$
This is possible since the functions $h_1$ and $h_2$ are bounded and the functions $\exp$ and alr$^{-1}$ are upper and lower bounded on compact sets. Assumption {\bf H2} follows directly from Lemma \ref{DirichletLip} and Assumption {\bf H3} follows from our conditions on the coefficients and the fact that the functions $\exp$ and alr$^{-1}$ are Lipschitz on any compact sets. 

It remains to show {\bf H1}. We first note that for any compact interval $I\subset (0,\infty)$, the $\Gamma$ function satisfies $0<\inf_{z\in I}\Gamma(z)\leq \sup_{z\in I}\Gamma(z)<\infty$. Moreover $\prod_{i=1}^dx_i^{\alpha_i-1}\geq \prod_{i=1}^dx_i^{L-1}$ for $x\in [0,1]^d$.
It is now clear $\inf_{\alpha\in K}f_{\alpha}(x)>0$ except if $x_i=0$ for some $1\leq i\leq d$. This entails {\bf H1} and the proof is now complete.

\hfill \qedsymbol

\subsection{Proof of Proposition \ref{PDirichlet2}}

The proof will follow from Proposition \ref{ODC}. 
One can set $F=(C',c)'$, $D=\begin{pmatrix} A&0\\0&a\end{pmatrix}$ and $E=\begin{pmatrix} B& 0\\0&b\end{pmatrix}$. 
Then $\overline{\mbox{Range }(\zeta)}$ is a compact set $K''$ of $\R^d$. Because $g^{-1}$ is continuous, one can find $r>1$ such that $g^{-1}(K'')\subset K:=K_r$. It is then clear that the restriction of $g$ to $K$ is a one-to-one continuous mapping from $K$ to $g(K)$ and with a 
Lipschitz continuous inverse (recall that $g(K)$ is compact).
Moreover, the validity of {\bf H1-H2} has been shown in the proof of Proposition \ref{PDirichlet1}. The proof is now complete.$\square$

\subsection{Proof of Proposition \ref{plogistic}}

The result will follow from Corollary \ref{corexist}.
From our assumptions, the mapping $\alpha:=(\mu,\phi)$ such that 
$$\mu(y)=A_0+\sum_{k\geq 1}A_k h_1(Y_{t-k}),\quad \log\phi(y)=\sum_{k\geq 1}a_k h_2(y_k),$$
takes values in a compact set $K$ of the form $[-a,a]^{d-1}\times [1/b,b]$ for some $a>0$ and $b>1$. From Lemma \ref{LogisticLip}, {\bf H2} is satisfied.
Due to the fact that $\alpha$ takes values in the compact set $K$, {\bf H1} is easy to check. Finally {\bf H3} directly follows from our assumptions on the coefficients and the Lipschitz property of the exponential function on a compact interval.

\hfill \qedsymbol

\subsection{Proof of Proposition \ref{plogistic2}}

The result will follow from Proposition \ref{ODC}.
One can set $F=(C',c)'$, $D=\begin{pmatrix} A&0\\0&a\end{pmatrix}$ and $E=\begin{pmatrix} B& 0\\0&b\end{pmatrix}$. 
Here, we have $g(\alpha)=g(\mu,\phi)=(\mu,\log \phi)$.
 $\overline{\mbox{Range }(\zeta)}$ is contained in a compact set $K'$ 
 the form $[-a,a]^{d-1}\times [-c,c]$ where $a,c>0$ are large enough. 
 One can then set $K=[-a,a]^{d-1}\times [1/b,b]$ with $b=\exp(c)$.
 of $\R^d$. The validity of {\bf H1-H2} has been shown in the proof of Proposition \ref{plogistic} which completes the proof.

 \hfill \qedsymbol

\subsection{Proof of Theorem \ref{theo:2.1}}

The proof of this result is a quite straightforward extension of some results about chains with complete connections on finite set. See in particular \citet{bressaud1999decay}. We adapt here the different  steps of this extension, following \citet{truquet2020coupling}.

\medskip

We actually build a compatible family of finite-dimensional probability measures defined on the cylinders of $X^{\Z}$. Kolmogorov's extension theorem then guarantees the existence and the uniqueness of a probability measure on the infinite product $X^{\Z}$ and compatible with these finite-dimensional probability measures. Existence of a process $(Y_t)_{t\in \Z}$ with law $\pi$ easily follows and we will prove that it is stationary and ergodic.

\medskip

%%%% MODIFICATION 16/01/23

The first step of our proof relies on the maximal coupling applied recursively in order to construct two paths of the same process but with different initial conditions. %We defer the reader to \citet{den2012probability}, Theorem $2.12$ for definition of such a coupling. The proof of the following lemma is then straightforward.

Consider two probability measures $\mu$ and $\nu$ defined on $\mathcal{X}$ that are absolutely continuous with respect to a measure $\eta$:
    \begin{equation}
        \mu = f \cdot \eta \quad \text{and} \quad \nu = g \cdot \eta
    \end{equation}
where $f$ and $g$ are two non-negative functions defined on $X$.
A maximal coupling $\gamma$ of $\mu$ and $\nu$ (see \citet{den2012probability} for details) is given by
    \begin{equation} \label{eq:max_coup}
        \gamma (\d u , \d v) = f(u) \wedge g(u) \ \delta_u (\d v) \eta (\d u) + \dfrac{(f(u)-f(u)\wedge g(u))\times (g(v)-f(v)\wedge g(v))}{d_{TV}(\mu , \nu)} \ \eta (\d u) \eta (\d v).
    \end{equation}

\begin{lem} \label{lem:8.1}

Consider $x=(x_n)_{n\in \N}$ and $y=(y_n)_{n\in \N}$ two elements of $X^{\N}$.

There exists a process $(U_n^{x,y} ; V_n^{x,y})_{n\geqslant 1}$ valued in $\left( X^2 ; \X \otimes \X \right)$ such that for any integer $n\geq 1$ and $A\in \mathcal{B}(X)$, %$\ell\geq 0$
	\begin{gather}
%U_{-\ell}^{x,y}&=&x_{\ell},\quad V_{-\ell}^{x,y}=y_{\ell}\\
% \P\left(U_n^{x,y}\in A\vert U_{n-j}^{x,y}; j\geq 1\right)&=&P\left(A\vert (U_{n-j}^{x,y})_{j\geq 1}\right)\\
            \P\left(U_n^{x,y}\in A \mid U_{n-1}^{x,y}, \ldots , U_1^{x,y} \right)=P\left(A \mid U_{n-1}^{x,y}, \ldots , U_1^{x,y}\right) \label{eq:8.6.3} \\
 %\P\left(V_n^{x,y}\in A\vert V_{n-j}^{x,y}; j\geq 1\right)&=&P\left(A\vert (V_{n-j}^{x,y})_{j\geq 1}\right)\\
            \P\left(V_n^{x,y}\in A \mid V_{n-1}^{x,y}, \ldots , V_1^{x,y}\right)=P\left(A \mid V_{n-1}^{x,y}, \ldots , V_1^{x,y} \right) \label{eq:8.6.4} \\
            \P \left( U_n^{x,y} \neq V_n^{x,y} \mid (U_{n-j}^{x,y},V_{n-j}^{x,y})_{1 \leqslant j \leqslant n-1}\right)= d_{TV} \left( P\left(\cdot \mid (U_{n-j}^{x,y})_{1 \leqslant j \leqslant n-1}\right), P\left(\cdot \mid (V_{n-j}^{x,y})_{1 \leqslant j \leqslant n-1}\right)\right). \label{eq:8.6.5}
	\end{gather}

\end{lem}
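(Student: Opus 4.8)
The plan is to build the coupled process $(U_n^{x,y},V_n^{x,y})_{n\geq 1}$ one coordinate at a time, at each step gluing together the two one-point conditional laws with the maximal coupling \eqref{eq:max_coup}, and then to promote this family of one-step kernels to a genuine process via the Ionescu--Tulcea theorem. The first thing to get right is the bookkeeping of the conditioning past. Given realized values $u_1,\dots,u_{n-1}$ and $v_1,\dots,v_{n-1}$ in $X$, I form the two infinite past sequences
$$\bar u^{(n)}=(u_{n-1},u_{n-2},\dots,u_1,x_0,x_1,x_2,\dots)\in X^{\N},\qquad \bar v^{(n)}=(v_{n-1},\dots,v_1,y_0,y_1,\dots)\in X^{\N},$$
i.e. the most recent realized coordinates are placed first and the fixed tails $x$ and $y$ complete each sequence into the infinite product on which $P$ is defined (for $n=1$ these are simply $x$ and $y$).

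Next I define the step-$n$ transition kernel on $X^2$ to be the maximal coupling of $P(\cdot\mid\bar u^{(n)})$ and $P(\cdot\mid\bar v^{(n)})$. By {\bf A1} both measures are absolutely continuous with respect to the common reference measure $\xi$, with densities $p(\cdot\mid\bar u^{(n)})$ and $p(\cdot\mid\bar v^{(n)})$, so the coupling is exactly the measure $\gamma$ of \eqref{eq:max_coup} with $\eta=\xi$, $f=p(\cdot\mid\bar u^{(n)})$ and $g=p(\cdot\mid\bar v^{(n)})$. I would then check that this is a bona fide probability kernel: by {\bf A2} the map $(y,w)\mapsto p(w\mid y)$ is jointly measurable, and the maps $(u_1,\dots,u_{n-1})\mapsto\bar u^{(n)}$ are measurable, so $\gamma$ depends measurably on $(u_1,\dots,u_{n-1},v_1,\dots,v_{n-1})$. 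This measurability is the one genuinely delicate point; once it is secured, the Ionescu--Tulcea theorem applied to this consistent family of kernels yields a probability measure on $(X^2)^{\N}$, hence the desired process, whose conditional law at step $n$ given the strict past is $\gamma$.

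It remains to read off the three asserted identities from the structure of $\gamma$. Properties \eqref{eq:8.6.3} and \eqref{eq:8.6.4} are immediate because a coupling has the prescribed marginals: integrating out $v$ in \eqref{eq:max_coup} returns $f\cdot\xi=P(\cdot\mid\bar u^{(n)})$, and symmetrically the $v$-marginal is $P(\cdot\mid\bar v^{(n)})$. For \eqref{eq:8.6.5}, the diagonal component of $\gamma$ carries mass $\int (f\wedge g)\,d\xi=1-d_{TV}\left(P(\cdot\mid\bar u^{(n)}),P(\cdot\mid\bar v^{(n)})\right)$, whereas the product component is supported off the diagonal, since $f-f\wedge g$ and $g-f\wedge g$ vanish outside the disjoint sets $\{f>g\}$ and $\{g>f\}$, and it carries the complementary mass $d_{TV}$. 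Consequently the conditional probability that $U_n^{x,y}\neq V_n^{x,y}$ given the past equals $d_{TV}\left(P(\cdot\mid\bar u^{(n)}),P(\cdot\mid\bar v^{(n)})\right)$, which is precisely \eqref{eq:8.6.5}.

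Beyond the measurability issue, the only subtlety is the degenerate case $d_{TV}=0$, where the denominator in \eqref{eq:max_coup} vanishes; there one reads the product term as $0$, the coupling reduces to its diagonal part, and $U_n^{x,y}=V_n^{x,y}$ almost surely, consistently with \eqref{eq:8.6.5}. I expect the recursion, the marginal check and the total-variation computation to be routine, with the joint measurability of the maximal coupling in the conditioning variables --- resting on {\bf A2} --- being the main thing to argue with care.
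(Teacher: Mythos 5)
Your proposal is correct and follows essentially the same route as the paper: build the step-$n$ kernel as the maximal coupling \eqref{eq:max_coup} of $P(\cdot\mid u_{n-1},\dots,u_1,x)$ and $P(\cdot\mid v_{n-1},\dots,v_1,y)$, secure its measurability via \textbf{A2}, extend to a process on $(X^2)^{\N}$, and read off \eqref{eq:8.6.3}--\eqref{eq:8.6.5} from the marginal and diagonal structure of the coupling. The only (immaterial) difference is that you invoke the Ionescu--Tulcea theorem where the paper applies Kolmogorov's extension theorem to the consistent family $\mu_{n+1}=K_n^{x,y}\cdot\mu_n$; your variant is if anything slightly cleaner, as it needs no topological regularity of $X$.
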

    
\begin{proof}
    
    The main idea is to build by induction for $n \geqslant 1$ a probability measure $\mu_n$ defined on the measurable space $\left( \left((X^2\right)^n , \left(\mathcal{X}\otimes \mathcal{X}\right)^{\otimes n} \right)$ such that
        \begin{equation}
            \forall n \geqslant 1, \ \mu_{n+1} = K_n^{x,y} \cdot \mu_n,
        \end{equation}
    where $K_n^{x,y}$ is the transition kernel defined as follows.

    We set 
        \begin{equation}
            K_n^{x,y} : \begin{array}[t]{l}
                \mathcal{X}\otimes \mathcal{X} \times \left( X^2 \right)^n \longrightarrow [0 , 1]  \\
                C \times \left( (u_n,v_n),\mydots (u_1,v_1) \right) \longmapsto \gamma_{(u_n,v_n),\dots ,(u_1,v_1)}^{x,y} (C)
            \end{array}
        \end{equation}
        where $\gamma_{(u_n,v_n), \dots , (u_1,v_1)}^{x,y}$ is the maximal coupling of $P(\cdot \mid u_n,\ldots , u_1, x)$ and $P (\cdot \mid v_n, \ldots , v_1 , y)$ given by equation \ref{eq:max_coup}.
    
    Given a measurable set $C \in \mathcal{X}\otimes \mathcal{X}$, the measurability of $K_n^{x,y} (C , \cdot)$ relies on the measurability of function $p$ from assumption \ref{assumptionA2} and the measurability of integrals with a parameter.

    \medskip

    Furthermore, we initialize our sequence $(\mu_n)_{n \geqslant 1}$ with $\mu_1$ being the maximal coupling of $P(\cdot \mid x)$ and $P(\cdot \mid y)$ given by equation \ref{eq:max_coup}.

    \medskip
    
    Kolmogorov's consistency conditions are immediately satisfied for the sequence of measures $(\mu_n)_{n \geqslant 1}$, and the application of Kolmogorov's extension theorem leads to the desired result. Equations \ref{eq:8.6.3} to \ref{eq:8.6.5} are automatically satisfied by construction.
\end{proof}

    %%%% FIN MODIFICATION

We next define the Markov chain $(S_n)_{n \in \N}$ valued in $\N$ such that $S_0=0$ almost surely and for all $n$ and $i$ in $\N$:
	\begin{equation}
	\left\lbrace
	\begin{array}{l}
	\P (S_{n+1}=i+1 \ \mid \ S_n=i) = 1-b_i \\
	\P(S_{n+1}=0 \ \mid S_n = i ) = b_i.
	\end{array}
	\right.
	\end{equation}
    %%%% MODIFICATION 18/01/23
We denote for all $n \in \N$:
	\begin{equation}
            \begin{array}{l}
                b_0^* = b_0\\
                \forall n \geqslant 1, \ b_n^* = \P (S_n = 0).
            \end{array}
	\end{equation}
	
\begin{lem} \label{lem:8.2}
Let $x,y$ be two elements in $X^{\N}$, and define for all $n \in \N^*$:
	\begin{equation}
		T_n^{x,y} := \inf \ \left\lbrace m \in \left\lbrace 0,\ldots ,n-1 \right\rbrace \ \bigm\mid \ U_{n-m}^{x,y} \neq V_{n-m}^{x,y} \right\rbrace
	\end{equation}
with the convention
    \[\inf \emptyset = + \infty.\]

Then for all $n \in \N$:
	\begin{equation}
	\P \left( T_n^{x,y}=0 \right) = \P \left( U_n^{x,y} \neq V_n^{x,y} \right) \leqslant b_n^{*}.
	\end{equation}
\end{lem}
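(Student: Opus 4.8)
The first identity is immediate from the definition of $T_n^{x,y}$: the value $m=0$ is admissible in the infimum precisely when $U_n^{x,y}\neq V_n^{x,y}$, so $\{T_n^{x,y}=0\}=\{U_n^{x,y}\neq V_n^{x,y}\}$. The whole content is therefore the inequality $\P(U_n^{x,y}\neq V_n^{x,y})\le b_n^{*}$, and the plan is to compare the backward ``age'' process $(T_n^{x,y})$ with the Markov chain $(S_n)$ by a coupling. Two preliminary remarks drive the argument. First, $(b_m)_{m\ge 0}$ is non-increasing: if $y\overset{m+1}{=}z$ then $y\overset{m}{=}z$, so the supremum defining $b_{m+1}$ is taken over a smaller set of pairs than that defining $b_m$. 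Second, writing $\mathcal{F}_n$ for the $\sigma$-field generated by $(U_j^{x,y},V_j^{x,y})_{1\le j\le n}$, the process $(T_n^{x,y})$ obeys the simple rule $T_n^{x,y}=T_{n-1}^{x,y}+1$ on the agreement event $\{U_n^{x,y}=V_n^{x,y}\}$ and $T_n^{x,y}=0$ on the disagreement event, with the convention $T_0^{x,y}=+\infty$.

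The key step is to control the conditional disagreement probability $q_n:=\P(U_n^{x,y}\neq V_n^{x,y}\mid \mathcal{F}_{n-1})$, which by (\ref{eq:8.6.5}) equals the total variation distance between the two one-step kernels evaluated at the pasts $(U_{n-1}^{x,y},\dots,U_1^{x,y},x)$ and $(V_{n-1}^{x,y},\dots,V_1^{x,y},y)$. On the event $\{T_{n-1}^{x,y}=i\}$, including $i=+\infty$, these two pasts coincide on their first $\min(i,n-1)$ coordinates, whence $q_n\le b_{\min(T_{n-1}^{x,y},\,n-1)}$. I then build, on a possibly enlarged space carrying an independent sequence of uniform variables, a copy of $(S_n)$ coupled to $(T_n^{x,y})$ so that $S_n\le T_n^{x,y}$ for every $n$, arguing by induction. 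Assuming $S_{n-1}\le T_{n-1}^{x,y}$, I force $S$ to reset to $0$ whenever $T^{x,y}$ resets, that is on $\{U_n^{x,y}\neq V_n^{x,y}\}$, and, using the auxiliary uniform variable, I let $S$ additionally reset on the agreement event (of conditional probability $1-q_n$) with conditional probability $(b_{S_{n-1}}-q_n)/(1-q_n)$, so that $S$ resets with total conditional probability exactly $b_{S_{n-1}}$ and otherwise increments by one; this gives $(S_n)$ the prescribed transition law. Since $b$ is non-increasing and $S_{n-1}\le\min(T_{n-1}^{x,y},n-1)$ (combining the inductive hypothesis with the trivial bound $S_{n-1}\le n-1$), we get $q_n\le b_{\min(T_{n-1}^{x,y},n-1)}\le b_{S_{n-1}}$, so the above conditional probability lies in $[0,1]$ and the construction is legitimate.

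It remains to check that the invariant $S_n\le T_n^{x,y}$ is preserved. On the disagreement event both processes reset to $0$; on the agreement event $T^{x,y}$ increases by one while $S$ either increments, with $S_{n-1}+1\le T_{n-1}^{x,y}+1$, or resets to $0\le T_n^{x,y}$. Hence $S_n\le T_n^{x,y}$ for all $n$, so $\{T_n^{x,y}=0\}\subseteq\{S_n=0\}$ and therefore $\P(U_n^{x,y}\neq V_n^{x,y})=\P(T_n^{x,y}=0)\le\P(S_n=0)=b_n^{*}$, which is the claim. The main obstacle I anticipate is not the algebra but the bookkeeping: one must verify that the enlarged filtration keeps $(S_n)$ Markov with exactly the prescribed transition probabilities while simultaneously enforcing the domination, and handle cleanly the boundary case $T_{n-1}^{x,y}=+\infty$ where the bound $q_n\le b_{n-1}$ is what keeps the comparison with $b_{S_{n-1}}$ valid.
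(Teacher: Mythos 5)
Your proof is correct, and it establishes the same comparison with the renewal chain $(S_n)$ that the paper uses, but by a genuinely different mechanism. The paper proves the stochastic domination $\P(S_n \geqslant k) \leqslant \P(T_n^{x,y} \geqslant k)$ for all $k$ by induction on $n$: it first extracts the transition bounds $\P(T_{n+1}^{x,y}=k+1 \mid T_n^{x,y}=k) \geqslant 1-b_k$ and $\P(T_{n+1}^{x,y}=0 \mid T_n^{x,y}=k) \leqslant b_k$ from the maximal-coupling identity, then propagates the domination through a summation-by-parts rearrangement of $\P(T_{n+1}^{x,y}\geqslant k)$, invoking (as you do) the monotonicity of $(b_m)$, and finally reads off the claim at $k=1$. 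You instead realize the domination pathwise: on an enlarged space carrying independent uniforms you build a copy of $(S_n)$ whose reset probability given the enlarged past is exactly $b_{S_{n-1}}$, which is legitimate because $q_n \leqslant b_{\min(T_{n-1}^{x,y},\,n-1)} \leqslant b_{S_{n-1}}$ by your invariant $S_{n-1}\leqslant T_{n-1}^{x,y}$, the trivial bound $S_{n-1}\leqslant n-1$, and the monotonicity of $b$; note also that $q_n \leqslant b_0 <1$ under the theorem's hypothesis, so the randomization ratio $(b_{S_{n-1}}-q_n)/(1-q_n)$ never degenerates, and the tower property shows $(S_n)$ does have the prescribed Markov transitions since $b_{S_{n-1}}$ is measurable with respect to $\sigma(S_0,\dots,S_{n-1})$. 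The inclusion $\{T_n^{x,y}=0\}\subseteq\{S_n=0\}$ then gives the claim. What each route buys: your coupling is shorter, yields the stronger almost-sure ordering $S_n \leqslant T_n^{x,y}$ rather than ordering of laws, and avoids the paper's laborious Abel-summation induction; the paper's argument in exchange never enlarges the probability space and stays purely at the level of distributions. Two details of your write-up improve on the paper's: you actually prove that $(b_m)$ is non-increasing, which the paper uses silently inside its induction, and you isolate the boundary case $T_{n-1}^{x,y}=+\infty$, where only the bound $q_n\leqslant b_{n-1}$ is available because the two conditioning pasts agree on their first $n-1$ coordinates but then continue with $x$ and $y$ respectively --- precisely the point where $S_{n-1}\leqslant n-1$ is needed.
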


\begin{proof} In all that follows, we keep the same notations we used for the proof of Lemma \ref{lem:8.1}
Let us first remark that for $0 \leqslant k \leqslant n$, if $u_{n}=v_{n}, \dots , u_{n-k+1}=v_{n-k+1}$, we have by definition
	\begin{align}
	K_{n}^{x,y}(\Delta^c \mid (u_{n},v_{n}),\dots (u_1,v_1)) & = d_{TV}(P(\cdot \mid u_{n},\dots u_1,x) ; P(\cdot \mid v_{n},\dots , v_1,y)) \notag \\
 & \leqslant b_{k}.
	\end{align}
Thus, for any $n \in \N^*$ and $0 \leqslant k \leqslant n-1$
	\begin{align}
	\P(T_{n+1}^{x,y}=k+1 \mid T_n^{x,y}=k) & = \dfrac{\P(U_{n+1}^{x,y}=V_{n+1}^{x,y},\dots , U_{n+1-k}^{x,y}=V_{n+1-k}^{x,y},U_{n-k}^{x,y}\neq V_{n-k}^{x,y})}{\P(T_n^{x,y}=k)} \notag \\
 & = \dfrac{1}{\P(T_n^{x,y}=k)} \times \int \mathds{1}_{\Delta \times \dots \times \Delta \times \Delta^c}((u_n,v_n),\dots (u_{n-k+1},v_{n-k+1}),(u_{n-k},v_{n-k})) \notag \\
 & \hspace*{4cm} \times K_n^{x,y} ( \Delta \mid (u_n,v_n)\dots ,(u_1,v_1)) \mu_n ( d(u_n,v_n),\dots d(u_1,v_1)) \notag \\
 & \geqslant \dfrac{1}{\P(T_n^{x,y}=k)} \times (1-b_{k}) \times \P(U_n^{x,y}=V_n^{x,y},\dots , U_{n+1-k}^{x,y}=V_{n+1-k}^{x,y},U_{n-k}^{x,y}\neq V_{n-k}^{x,y}) \notag \\
 & =1-b_{k}
	\end{align}
and
	\begin{align}
	\P(T_{n+1}^{x,y}=0 \mid T_n^{x,y}=k) & \leqslant 1- \P(T_n^{x,y}=k+1 \mid T_n^{x,y}=k) \notag \\
& \leqslant b_{k}.
	\end{align}
We then show by induction over $n$:
	\begin{equation} \label{eq:8.6.15}
	\forall n \in \N^*, \ H_n : \ \forall k \in \N, \ \P(S_n \geqslant k) \leqslant \P(T_n^{x,y}\geqslant k).
	\end{equation}
\begin{enumerate}[label=$\triangleright$]
\item For $n=1$, we have for $k=0$
    \begin{equation}
        \P (S_1 \geqslant 0 ) = \P (T_1^{x,y}\geqslant 0) = 1.
    \end{equation}
For $k=1$, we have
    \begin{equation}
        \P (S_1 \geqslant 1) = \P (S_1=1) = 1-b_0
    \end{equation}
and
    \begin{equation}
        \P (T_1^{x,y}\geqslant 1) = \P (T_1^{x,y}=+\infty)=\P(U_1^{x,y}=V_1^{x,y})\geqslant 1-b_0
    \end{equation}
by construction of the process $\left( (U_n^{x,y},V_n^{x,y}) \right)_{n \geqslant 0}$.

Finally, for $k\geqslant 2$, we have $P(S_1 \geqslant k)=0 \leqslant \P (T_1^{x,y}\geqslant k)$, therefore proposition $H_0$ is true.
\item Assume $H_n$ is true for some $n \in \N$ fixed, let us show that $H_{n+1}$ is also true.

For $k=0$, the inequality is obvious, and for $k \geqslant n+2, \ \P(S_{n+1}\geqslant k)=0$, so the inequality is also trivial.

For $k=n+1$, we have
    \begin{equation}
        \P (S_{n+1} \geqslant n+1) = \P (S_{n+1}=n+1) = \prod_{j=0}^{n} (1-b_j)
    \end{equation}
and
    \begin{equation}
        \P (T_{n+1}^{x,y} \geqslant n+1) = \P (T_{n+1}^{x,y}=+\infty)=\P (U_{n+1}^{x,y}=V_{n+1}^{x,y},\ldots , U_1^{x,y}=V_1^{x,y})\geqslant \prod_{j=0}^k (1-b_j)
    \end{equation}
by construction of the process $\left( (U_n^{x,y},V_n^{x,y}) \right)_{n \geqslant 0}$, so the inequality is satisfied.

Take now $1 \leqslant k \leqslant n$, we have 
	\begin{align}
	\P(T_{n+1}^{x,y} \geqslant k) & = \sum_{m = k}^{n} \P(T_{n+1}^{x,y}=m) + \P(T_{n+1}^{x,y}=+\infty) \notag \\
	 & = \sum_{m = k}^n \P(T_{n+1}^{x,y}=m \mid T_n^{x,y}=m-1) \times \P(T_n^{x,y}=m-1) + \P (T_{n+1}^{x,y}=+\infty) \notag \\
	 & \geqslant \sum_{m = k}^n (1-b_{m-1}) \P(T_n^{x,y}=m-1) + \P (T_{n+1}^{x,y}=+\infty) \notag \\
	 & = \sum_{m = k}^{n} (1-b_{m-1})\left( \P (T_n^{x,y} \geqslant m-1)-\P(T_n^{x,y} \geqslant m) \right) + \P (T_{n+1}^{x,y}=+\infty) \notag \\
	 & = \sum_{m =k}^n \P ( T_n^{x,y} \geqslant m-1) - \sum_{m = k}^n b_{m-1} \P (T_n^{x,y} \geqslant m-1 ) \notag \\
	 & \hspace*{2cm} - \sum_{m = k}^n \P( T_n^{x,y} \geqslant m) + \sum_{ m = k}^n b_{m-1} \P (T_n^{x,y} \geqslant m ) + \P (T_{n+1}^{x,y} = + \infty) \notag \\
      & = (1-b_{k-1}) \P (T_n^{x,y} \geqslant k-1) + \sum_{m =k}^{n-1} (b_{m-1}-b_m) \P (T_n^{x,y} \geqslant m) + b_{n-1} \P (T_n^{x,y} \geqslant n) \notag \\
      & \hspace*{2cm}- \P (T_n^{x,y} \geqslant n)+\P (T_{n+1}^{x,y} =+ \infty) \notag \\
      & = (1-b_{k-1}) \P (T_n^{x,y} \geqslant k-1) + \sum_{m =k}^{n-1} (b_{m-1}-b_m) \P (T_n^{x,y} \geqslant m) + b_{n-1} \P (T_n^{x,y} \geqslant n) \notag \\
       & \hspace*{2cm} +\prod_{j=0}^{n-1} (1-b_j)(1-b_n-1) \notag \\
      & = (1-b_{k-1}) \P (T_n^{x,y} \geqslant k-1) + \sum_{m =k}^{n} (b_{m-1}-b_m) \P (T_n^{x,y} \geqslant m)
    \end{align}
Similarly
	\begin{align}
	\P (S_{n+1} \geqslant k) & = (1-b_{k-1}) \P (S_n \geqslant k-1) + \sum_{m = k}^n (b_{m-1}-b_m) \P (S_n \geqslant m) \notag \\
 	& \leqslant (1-b_{k-1}) \P (T_n^{x,y} \geqslant k-1) + \sum_{m = k}^n (b_{m-1}-b_m) \P (T_n^{x,,y} \geqslant m) \notag \\
 	& \hspace*{3cm} \text{since $H_n$ is true and $(b_m)_{m\geqslant 0}$ is decreasing} \notag \\
 	&  \leqslant \P (T_{n+1}^{x,y} \geqslant k),
	\end{align}
which concludes the induction.
\end{enumerate}
%%%% FIN MODIFICATION
Applying inequality \ref{eq:8.6.15} for $k=1$, we obtain for all $n \in \N$:
	\begin{equation}
	\P (S_n \geqslant 1) \leqslant \P (T_n^{x,y} \geqslant 1)
	\end{equation}
Thus
	\begin{equation}
	\P ( T_n^{x,y} =0) \leqslant b_n^*,
	\end{equation}
i.e. 
	\begin{equation}
	\P (U_n^{x,y}\neq V_n^{x,y}) \leqslant b_n^*.
	\end{equation}
\end{proof}
%%%%% MODIFICATION 19 / 01 / 23
Given $k \in \Z$, $l \in \N$ and $n \in \N$ such that $n+k \geqslant 1$, as well as a past $y=(y_0,y_{1},\dots ) \in X^{\N}$, we define the probability measure $\pi_{k,l}^{(n,y)}$ on $\left( X^{l+1}, \X^{\otimes l+1}\right)$ for all $A_k,\dots ,A_{k+l} \in \X$ by
	\begin{equation}
	\pi_{k,l}^{(n,y)}(A_k \times \dots \times A_{k+l}) = \underbrace{\int_{\X} \dots \int_{\X}}_{n+k-1}  \underbrace{\int_{A_k} \dots \int_{A_{k+l}}}_{l+1} P( \d z_{n+k+l} \mid z_{n+k+l-1},\dots ,z_{1},y ) \dots P(\d z_{1} \mid y).
	\end{equation}
Note that if $(Z_n^y)_{n\geqslant 1}$ is a process satisfying
	\begin{equation} \label{eq:8.6.27}
	\P (Z_1^{y} \in A_1) = P ( A_1 \mid y),
	\end{equation}
and
	\begin{equation}
	\forall n \in \N^*, \ \P (Z_{n+1}^y \in A_{n+1} \ \mid \  Z_n^y=z_n, \dots Z_1^y=z_1) = P (A \mid z_n, \dots z_1,y),
	\end{equation}
then
	\begin{equation}\label{eq:8.6.29}
	(Z_{n+k}^y,\ldots , Z_{n+k+l}^y) \sim \pi_{k,l}^{n,y}.
	\end{equation}
\begin{lem}
The sequence $\left( \pi_{k,l}^{(n,y)} (\cdot ) \right)_{n \in \N}$ is a Cauchy sequence in $\left(\mathcal{M}_1 \left( \X^{\otimes l+1}\right), d_{TV} \right)$.
\end{lem}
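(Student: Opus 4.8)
The plan is to exploit the coupling constructed in Lemmas \ref{lem:8.1} and \ref{lem:8.2}, together with the time-homogeneity of the kernel $P$, to obtain a bound on $d_{TV}\left(\pi_{k,l}^{(n,y)},\pi_{k,l}^{(m,y)}\right)$ that is uniform in $m>n$ and tends to $0$ as $n\to\infty$. Recalling (\ref{eq:8.6.29}), $\pi_{k,l}^{(n,y)}$ is the law of the window $\left(Z_{n+k}^y,\ldots,Z_{n+k+l}^y\right)$ of a process started from the past $y$. First I would record the key consequence of the two lemmas: for \emph{any} two pasts $x,y\in X^{\N}$, coupling the processes $U^{x,y}\sim Z^x$ and $V^{x,y}\sim Z^y$ and taking a union bound over the $l+1$ indices of the window gives
$$d_{TV}\left(\pi_{k,l}^{(n,x)},\pi_{k,l}^{(n,y)}\right)\le \P\left(\exists\, j\in\{n+k,\ldots,n+k+l\}:U_j^{x,y}\neq V_j^{x,y}\right)\le \sum_{j=n+k}^{n+k+l}b_j^{*}.$$
The crucial point is that this bound is uniform over all pairs of pasts $x,y$, since $b_j^{*}=\P(S_j=0)$ depends only on the sequence $(b_m)$ and not on $x,y$.

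Next I would reduce the comparison of $\pi_{k,l}^{(n,y)}$ and $\pi_{k,l}^{(m,y)}$ (same past, different times) to this uniform bound by a shift argument. Writing $m=n+s$ with $s\ge 1$ and conditioning on the first $s$ generated values $\left(Z_1^y,\ldots,Z_s^y\right)$, the time-homogeneity of the chain with complete connections shows that, given the resulting random past $W=\left(Z_s^y,\ldots,Z_1^y,y_0,y_1,\ldots\right)$, the block $\left(Z_{m+k}^y,\ldots,Z_{m+k+l}^y\right)$ has the same law as $\left(Z_{n+k}^W,\ldots,Z_{n+k+l}^W\right)$. This yields the mixture representation $\pi_{k,l}^{(m,y)}=\E_W\left[\pi_{k,l}^{(n,W)}\right]$. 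Using the convexity of the total variation distance, $d_{TV}\left(\mu,\E_W[\nu_W]\right)\le \E_W\left[d_{TV}(\mu,\nu_W)\right]$, followed by the uniform bound above, I obtain
$$d_{TV}\left(\pi_{k,l}^{(n,y)},\pi_{k,l}^{(m,y)}\right)\le \E_W\left[d_{TV}\left(\pi_{k,l}^{(n,y)},\pi_{k,l}^{(n,W)}\right)\right]\le \sum_{j=n+k}^{n+k+l}b_j^{*},$$
a bound that does not depend on $m$.

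It then remains to check that $b_j^{*}\to 0$, so that the finite sum of $l+1$ terms tends to $0$ as $n\to\infty$, giving the Cauchy property uniformly in $m$. For this I would analyze the renewal chain $(S_n)$: since $b_0<1$ and $(b_m)$ is non-increasing, every $b_i<1$, and $\sum_m b_m<\infty$ is equivalent to $\prod_{i\ge 0}(1-b_i)>0$. Hence, started from state $0$, the probability of climbing forever without resetting is positive, the return probability to $0$ equals $1-\prod_{i\ge 0}(1-b_i)<1$, and state $0$ is transient. Consequently $\sum_n\P(S_n=0)<\infty$, so in particular $b_n^{*}=\P(S_n=0)\to 0$.

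The main obstacle is the shift step: one must justify carefully that, by time-homogeneity of the infinite-memory kernel, the future of the process after $s$ steps is distributed as a fresh process started from the random past $W$, so that the mixture representation holds and the problem of comparing two \emph{times} reduces to the already-controlled problem of comparing two \emph{initial pasts}. Once this is in place, everything else is a union bound over the fixed-length window and the standard transience computation for the renewal chain $(S_n)$.
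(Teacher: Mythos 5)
Your proposal is correct and follows essentially the same route as the paper: the coupling bound $d_{TV}\bigl(\pi_{k,l}^{(n,x)},\pi_{k,l}^{(n,y)}\bigr)\leqslant \sum_{j=0}^{l}b_{n+k+j}^{*}$ uniform over pasts (Lemmas \ref{lem:8.1} and \ref{lem:8.2} plus a union bound), then the shift/conditioning identity reducing the comparison of two \emph{times} to a mixture over random \emph{pasts} $[Z^y\cdot y]_p$ together with convexity of total variation, exactly as in the paper's display culminating in (\ref{eq:8.6.35}). The only divergence is at the final step, where the paper invokes Proposition 2 of \citet{bressaud1999decay} to get $\sum_n b_n^*<\infty$, whereas you prove it directly by observing that $b_m\leqslant b_0<1$ and $\sum_m b_m<\infty$ force $\prod_{i\geqslant 0}(1-b_i)>0$, making state $0$ transient for the renewal chain $(S_n)$ and hence $\sum_n \P(S_n=0)<\infty$; this is a valid, self-contained substitute for the citation.
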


\begin{proof}

Let $n, p \in \N$ with $n+k \geqslant 1$, we have for any measurable function $h :X^{l+1} \longrightarrow [0,1]$
    \begin{align}
        \int h \ \d \pi_{k,l}^{n+p,y} - \int h \ \d \pi_{k,l}^{n,y} & = \E \left( h(Z_{n+p+k}^y,\ldots , Z_{n+p+k+l}^y) \right)- \E \left( h(Z_{n+k}^y,\ldots ,Z_{n+k+l}^y) \right) \notag \\
         & = \E \left( \E \left( h(Z_{n+p+k}^y,\ldots ,Z_{n+p+k+l}^y) \mid Z_p^y,\ldots ,Z_1^y\right) \right) - \E \left( h(Z_{n+k}^y,\ldots ,Z_{n+k+l}^y) \right).
    \end{align}
However, one can show that for any $z_1,\ldots ,z_p \in X$, we have
    \begin{equation} 
        \E \left( h(Z_{n+p+k}^y,\ldots Z_{n+p+k+l}^y) \mid Z_p^{y}=z_p,\ldots Z_1^y=z_1 \right) = \E \left( h(Z_{n+k}^{[z\cdot y]_p},\ldots , Z_{n+k+l}^{[z\cdot y]_p})\right)
    \end{equation}
where $[z \cdot y]_p = (z_p,\ldots, z_1,y)$.

Hence
    \begin{equation}\label{eq:8.6.32}
        \int h \ \d \pi_{k,l}^{n+p,y} - \int h \ \d \pi_{k,l}^{n,y} = \E \left( h(Z_{n+k}^{[Z^y\cdot y]_p},\ldots , Z_{n+k+l}^{[Z^y\cdot y]_p})\right) - \E \left( h(Z_{n+k}^y,\ldots ,Z_{n+k+l}^y) \right),
    \end{equation}
and by taking the expectancy in both sides of \ref{eq:8.6.32}, we obtain
    \begin{align} \label{eq:8.6.33}
        \int h \ \d \pi_{k,l}^{n+p,y} - \int h \ \d \pi_{k,l}^{n,y} \leqslant \E \left( d_{TV} \left( \pi_{k,l}^{n,[Z^y \cdot y]_p},\pi_{k,l}^{n,y} \right) \right).
    \end{align}

However, we have according to remarks \ref{eq:8.6.27} to \ref{eq:8.6.29} and the coupling inequality, we have for any $y,z \in X^{\N}$
	\begin{align} \label{eq:8.6.34}
	d_{TV}\left( \pi_{k,l}^{(n,y)} ; \pi_{k,l}^{(n,z)} \right) & \leqslant \P \left( \left( U_{n+k}^{y,z}, \dots , U_{n+k+l}^{y,z} \right) \neq \left( V_{n+k}^{y,z}, \dots ,V_{n+k+l}^{y,z} \right) \right) \notag \\
	 & = \P \left( \bigcup_{j=0}^{l} \left\lbrace U_{n+k+j}^{y,z} \neq V_{n+k+j}^{y,z} \right\rbrace \right) \notag \\
	 & \leqslant \sum_{j=0}^l \P \left( U_{n+k+j}^{y,z} \neq V_{n+k+j}^l \right) \notag \\
	 & \leqslant \sum_{j=0}^l b_{n+k+j}^*,
	\end{align}
where the last inequality is obtained by Lemma \ref{lem:8.2}.

Therefore, returning to equation \ref{eq:8.6.33} and taking the supremum on the functions $h$, we obtain
    \begin{equation}\label{eq:8.6.35}
        d_{TV} \left( \pi_{k,l}^{n+p,y},\pi_{k,l}^{n,y} \right) \leqslant \sum_{j=0}^l b_{n+k+j}^*.
    \end{equation}

According to Proposition 2 in \citet{bressaud1999decay}, since $b_0 <1$ and $\sum_{n \geqslant 0} b_n < +\infty$, we also have $\sum_{n \geqslant 0} b_n^* <+ \infty$, and for $n$ large enough, the last quantity in \ref{eq:8.6.35} is close to zero.

% Let us denote the signed measure $\mu_{k,l}^{n,p}=\pi_{k,l}^{(n,y)}-\pi_{k,l}^{(n+p,y)}$. Combining equations and , if $\varepsilon >0$ is fixed, there exists $n_0 \in \N$ such that for $n\geqslant n_0$, $p \in \N$ and any $A_k,\dots , A_{k+l} \in \X$:
% 	\begin{equation}
% 	\mu_{k,l}^{n,p}(A_k \times \dots \times A_{k+l} ) < \varepsilon.
% 	\end{equation}
% Yet, the measure $\mu_{k,l}^{n,p}$ is bounded, thus for any $B \in \X^{\otimes (l+1)}$, there exists $A^{(n)}=A_k^{(n)} \times \dots \times A_{k+l}^{(n)}$ in $\X^{l+1}$ such that:
% 	\begin{equation}
% 	\left| \mu_{k,l}^{n,p} \left( A^{(n)} \Delta B\right) \right| \leqslant \left|\mu_{k,l}^{n,p}\right| \left( A^{(n)} \Delta B \right) < \varepsilon.
% 	\end{equation}
% Thus, for $n\geqslant n_0$, $p\in \N$ and any $B \in \X^{\otimes (l+1)}$:
% 	\begin{equation}
% 	\left| \mu`_{k,l}^{n,p} (B)\right| \leqslant \left| \mu_{k,l}^{n,p} (B) - \mu_{k,l}^{n,p} (A^{(n)}) \right| + \left| \mu_{k,l}^{n,p} (A^{(n)}) \right| < 2\varepsilon
% 	\end{equation}
% which means that for $n\geqslant n_0$ and $p \in \N$:
% 	\begin{equation}
% 	d_{TV} \left( \pi_{k,l}^{(n,y)} ; \pi_{k,l}^{(n+p,y)} \right) \leqslant 2 \varepsilon.
% 	\end{equation}
Thus, $\left( \pi_{k,l}^{(n,y)} \right)_{n \geqslant 0}$ is a Cauchy sequence in $\left( \mathcal{M}_1(\X^{\otimes (l+1)}), d_{TV} \right)$.
\end{proof}

%%%% FIN MODIFICATION

Since $\left( \mathcal{M}_1 \left( \X^{\otimes (l+1)} \right), d_{TV} \right)$ is complete, there exists for $k \in \Z,\ l\in \N$ and $y \in X^{\N}$ fixed, a probability measure $\pi_{k,l}^y$ such that:
	\begin{equation}
	\pi_{k,l}^{(n,y)} \underset{d_{TV}}{\longrightarrow} \pi_{k,l}^y.
	\end{equation}
\begin{lem}\label{convuni}
There exists $\pi_{k,l} \in \mathcal{M}_1( \X^{\otimes (l+1)} )$ such that:
	\begin{equation}
	\underset{y \in X^{\N}}{\sup} \ d_{TV} \left( \pi_{k,l}^{(n,y)} ; \pi_{k,l}\right) \underset{n \to +\infty}{\longrightarrow} 0.
	\end{equation}
\end{lem}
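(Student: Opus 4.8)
The plan is to show that the family of $y$-dependent limits $\pi_{k,l}^{y}$ constructed in the previous lemma does not actually depend on the past $y$, and then to promote the pointwise total-variation convergence to uniform convergence by reusing the estimate (\ref{eq:8.6.35}), whose right-hand side is already free of $y$. All the quantitative work has been done in the two preceding lemmas; what remains is to organize those bounds correctly.

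First I would establish $y$-independence of the limit. Starting from the coupling bound (\ref{eq:8.6.34}), namely $d_{TV}\left(\pi_{k,l}^{(n,y)};\pi_{k,l}^{(n,z)}\right)\leqslant \sum_{j=0}^{l}b_{n+k+j}^{*}$ valid for all $y,z\in X^{\N}$, I would let $n\to\infty$. Since $\pi_{k,l}^{(n,y)}\to\pi_{k,l}^{y}$ and $\pi_{k,l}^{(n,z)}\to\pi_{k,l}^{z}$ in total variation and $d_{TV}$ is continuous as a metric, the left-hand side converges to $d_{TV}\left(\pi_{k,l}^{y};\pi_{k,l}^{z}\right)$, while the right-hand side tends to $0$: indeed $\sum_{n\geqslant 0}b_{n}^{*}<\infty$ by Proposition 2 in \citet{bressaud1999decay}, so the tail $\sum_{j=0}^{l}b_{n+k+j}^{*}$ vanishes. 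Hence $\pi_{k,l}^{y}=\pi_{k,l}^{z}$ for every pair $y,z$, and I would denote the common value by $\pi_{k,l}$.

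Next I would derive the uniform bound. Fixing $y$ and $n$, inequality (\ref{eq:8.6.35}) gives $d_{TV}\left(\pi_{k,l}^{n+p,y};\pi_{k,l}^{n,y}\right)\leqslant \sum_{j=0}^{l}b_{n+k+j}^{*}$ for every $p$, with a right-hand side independent of $p$. Letting $p\to\infty$ and using that $\pi_{k,l}^{n+p,y}\to\pi_{k,l}$ together with continuity of $d_{TV}$, I obtain $d_{TV}\left(\pi_{k,l};\pi_{k,l}^{(n,y)}\right)\leqslant \sum_{j=0}^{l}b_{n+k+j}^{*}$. Since this controlling quantity is the same for every $y$, taking the supremum over $y\in X^{\N}$ yields $\sup_{y}d_{TV}\left(\pi_{k,l}^{(n,y)};\pi_{k,l}\right)\leqslant \sum_{j=0}^{l}b_{n+k+j}^{*}$, and the conclusion follows by letting $n\to\infty$.

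There is essentially no analytical obstacle: the only point requiring care is that the bounding sequence $\sum_{j=0}^{l}b_{n+k+j}^{*}$ is genuinely $y$-free, which is precisely what the maximal-coupling construction was designed to guarantee, so that the pointwise estimates promote to a supremum bound without any loss. If one preferred to avoid proving $y$-independence first, one could equivalently fix a reference past $y_{0}$, set $\pi_{k,l}:=\pi_{k,l}^{y_{0}}$, and bound $d_{TV}\left(\pi_{k,l}^{(n,y)};\pi_{k,l}^{y_{0}}\right)$ by the triangle inequality through $\pi_{k,l}^{(n,y_{0})}$, picking up a harmless factor of two.
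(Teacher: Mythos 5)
Your proposal is correct and follows essentially the same route as the paper: both establish $y$-independence of the limit from the $y$-free coupling bound (\ref{eq:8.6.34}) combined with $\sum_n b_n^*<\infty$, and then obtain the uniform estimate $\sup_y d_{TV}\bigl(\pi_{k,l}^{(n,y)};\pi_{k,l}\bigr)\leqslant$ a tail of $\sum_j b^*_{n+k+j}$-type that vanishes as $n\to\infty$. Your passage to the limit $p\to\infty$ in (\ref{eq:8.6.35}) is a cosmetic variant of the paper's triangle inequality through a fixed reference past $\tilde{x}$ — which is precisely the alternative you mention in your final remark.
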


\begin{proof}
Take any $y,z \in X^{\N}$, we have for all $n \in \N$:
	\begin{equation}
	d_{TV} \left( \pi_{k,l}^y ; \pi_{k,l}^z \right) \leqslant \underbrace{d_{TV} \left( \pi_{k,l}^y ; \pi_{k,l}^{(n,y)} \right)}_{\underset{n \to +\infty}{\longrightarrow} 0} + d_{TV} \left( \pi_{k,l}^{(n,y)} ; \pi_{k,l}^{(n,z)} \right) + \underbrace{d_{TV} \left( \pi_{k,l}^{(n,z)} ; \pi_{k,l}^{z} \right)}_{\underset{n \to +\infty}{\longrightarrow} 0}.
	\end{equation}
Moreover, we already proved with inequality \ref{eq:8.6.34} that $d_{TV} \left( \pi_{k,l}^{(n,y)} ; \pi_{k,l}^{(n,z)} \right) \underset{n \to +\infty}{\longrightarrow} 0$.

Thus, for any $y, z \in X^{\N}$, we have $\pi_{k,l}^y = \pi_{k,l}^z$. Let us denote $\pi_{k,l} = \pi_{k,l}^{z}$ for some $z$ fixed and $\tilde{x}=(x,x,\dots ) \in X^{\N}$ for an element $x \in X$ also fixed.

Let $\varepsilon >0$, there exists $n_0 \in \N$ such that for all $n\geqslant n_0$:
	\begin{equation}
	\sum_{j=0}^l b_{n+k+j}^* < \varepsilon \quad \text{and} \quad d_{TV} \left( \pi_{k,l}^{(n,\tilde{x})} ; \pi_{k,l} \right) < \varepsilon.
	\end{equation}
Thus, according to inequality \ref{eq:8.6.34}, we have for any $y \in X^{\N}$ and $n \geqslant n_0$:
	\begin{align}
	d_{TV} \left( \pi_{k,l}^{(n,y)} ; \pi_{k,l} \right) & \leqslant d_{TV} \left( \pi_{k,l}^{(n,y)} ; \pi_{k,l}^{(n,\tilde{x})} \right) + d_{TV} \left( \pi_{k,l}^{(n,\tilde{x})} ; \pi_{k,l} \right) \notag \\
	& \leqslant \sum_{j=0}^l b_{n+k+j}^* + \varepsilon \notag \\
	& < 2\varepsilon.
	\end{align}
So, for $n \geqslant n_0$, we have:
	\begin{equation}
	\underset{ y \in X^{\N}}{\sup} \ d_{TV} \left( \pi_{k,l}^{(n,y)} ; \pi_{k,l} \right) \leqslant 2\varepsilon.
	\end{equation}
Hence the result.
\end{proof}
It is therefore possible to define a probability measure $\pi$ on $\X^{\Z}$ by defining it for any cylinder $A = \prod_{n \in \Z} A_n$ of $X ^{\Z}$:
	\begin{equation}
	\pi (A) = \pi_{k,l-k} (A_k \times \dots A_{l})
	\end{equation}
where $k = \inf \ \left\lbrace n \in \Z \ \bigm\mid \ A_n \neq X \right\rbrace$ and $l = \sup \ \left\lbrace n \in \Z \ \bigm\mid \ A_n \neq X \right\rbrace$.

\medskip

As a consequence, there exists a stochastic process $(Y_t)_{t\in \Z}$ valued in $X$ with distribution $\P_Y = \pi$.

The first natural property to verify is if $(Y_t)_{t\in \Z}$ satisfies equation \ref{eq:2.1.10}, as it is proven in the following lemma.

\begin{lem}
For all $t \in \Z$ and $A \in \X$:
	\begin{equation}
	\P\left( Y_t \in A \ \mid \ Y_{t-1}^-=y_{t-1}^-\right) = P (A \mid y_{t-1}^-).
	\end{equation}
\end{lem}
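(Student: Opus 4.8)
The plan is to identify the map $w\mapsto P(A\mid w)$ as a version of the regular conditional probability of $Y_t$ given $\mathcal F_{t-1}:=\sigma(Y_s:\,s\leq t-1)$. Since $w\mapsto P(A\mid w)$ is measurable by {\bf A2} and $Y_{t-1}^-$ is $\mathcal F_{t-1}$-measurable, the candidate $P(A\mid Y_{t-1}^-)$ is itself $\mathcal F_{t-1}$-measurable, so by the monotone class theorem it suffices to verify, for every $m\geq 1$ and all $A_{t-1},\ldots,A_{t-m}\in\X$,
\begin{equation*}
\E\!\left[\mathbf 1_A(Y_t)\prod_{j=1}^m\mathbf 1_{A_{t-j}}(Y_{t-j})\right]=\E\!\left[P(A\mid Y_{t-1}^-)\prod_{j=1}^m\mathbf 1_{A_{t-j}}(Y_{t-j})\right],
\end{equation*}
since such cylinders form a $\pi$-system generating $\mathcal F_{t-1}$. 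Abbreviating $\phi$ for the product of indicators and $h=\mathbf 1_A$, I must match $\E[h(Y_t)\phi(Y_{t-1},\ldots,Y_{t-m})]$ with $\E\big[\big(\int h\,dP(\cdot\mid Y_{t-1}^-)\big)\phi(Y_{t-1},\ldots,Y_{t-m})\big]$.

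The bridge is the approximating construction. For fixed $y$, a process $(Z_n^y)_{n\geq 1}$ obeying (\ref{eq:8.6.27}) and the one-step relation $\P(Z^y_{n+1}\in\cdot\mid Z^y_n,\ldots,Z^y_1)=P(\cdot\mid Z^y_n,\ldots,Z^y_1,y)$ satisfies, for $n$ large enough that $n+t-m\geq 1$, the exact identity
\begin{equation*}
\E\!\left[h(Z^y_{n+t})\,\phi(Z^y_{n+t-1},\ldots,Z^y_{n+t-m})\right]=\E\!\left[\Big(\textstyle\int h\,dP(\cdot\mid Z^y_{n+t-1},\ldots,Z^y_1,y)\Big)\,\phi(Z^y_{n+t-1},\ldots,Z^y_{n+t-m})\right],
\end{equation*}
obtained by conditioning on $(Z^y_{n+t-1},\ldots,Z^y_1)$, inside of which $\phi$ is measurable. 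Denote these two equal numbers by $L_n$ and $R_n$. By (\ref{eq:8.6.29}) the block $(Z^y_{n+t-m},\ldots,Z^y_{n+t})$ has law $\pi^{(n,y)}_{t-m,m}$, so Lemma \ref{convuni} gives $L_n\to\E[h(Y_t)\phi(Y_{t-1},\ldots,Y_{t-m})]$, the left-hand target; in particular $\lim_nR_n=\lim_nL_n$ exists.

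It remains to pass $R_n$ to the limit, and this is the delicate point, because the kernel argument $(Z^y_{n+t-1},\ldots,Z^y_1,y)$ carries an unbounded amount of past. Here I exploit that $w\mapsto\int h\,dP(\cdot\mid w)$ is $(b_j)$-continuous: if $w,w'\in X^{\N}$ agree on their first $j$ coordinates then, using $0\leq h\leq 1$, $\big|\int h\,dP(\cdot\mid w)-\int h\,dP(\cdot\mid w')\big|\leq d_{TV}\!\big(P(\cdot\mid w),P(\cdot\mid w')\big)\leq b_j$. Freezing all coordinates beyond rank $j$ at a fixed value therefore changes $R_n$ by at most $b_j\|\phi\|_\infty$ and reduces the integrand to a bounded functional of the finite block $(Z^y_{n+t-1},\ldots,Z^y_{n+t-J})$, $J=\max(j,m)$, whose law converges by Lemma \ref{convuni} to that of $(Y_{t-1},\ldots,Y_{t-J})$ under $\pi$. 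Applying the same $(b_j)$-continuity estimate under $\pi$ shows the resulting finite-block expectation is within $b_j\|\phi\|_\infty$ of the right-hand target. Hence $\lim_nR_n$ differs from that target by at most $2b_j\|\phi\|_\infty$ for every $j$; letting $j\to\infty$ forces equality, which yields the claimed identity.

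The main obstacle is exactly this last step: interchanging the limit in $n$ with the kernel $P(\cdot\mid\cdot)$ whose dependence on remote coordinates must be controlled. The summability $\sum_{m}b_m<\infty$ assumed in Theorem \ref{theo:2.1} is far more than what is needed here; only $b_j\to 0$, combined with the uniform convergence of finite-dimensional marginals supplied by Lemma \ref{convuni}, is used to truncate the infinite memory and justify the exchange.
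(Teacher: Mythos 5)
Your proposal is correct and follows essentially the same route as the paper's proof: both reduce to a cylinder identity closed under a monotone class argument, use the approximating process whose finite blocks have the laws $\pi^{(n,y)}_{k,l}$, invoke the uniform total-variation convergence of Lemma \ref{convuni}, and control the infinite-memory kernel by freezing the remote past at a fixed tail $\tilde{x}$, paying a $b_j$-error on each side (the paper's $4\varepsilon$ bookkeeping is exactly your $2b_j\|\phi\|_\infty$ plus two vanishing TV terms). Your closing remark is also accurate: given Lemma \ref{convuni} as input, only $b_j\to 0$ is used at this step, the summability $\sum_m b_m<\infty$ being consumed earlier in establishing $\sum_n b_n^*<\infty$.
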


\begin{proof}
Consider $t \in \Z$ and $k \in \N$ fixed.

Let us denote by $\pi_t^-$ the distribution of $Y_t^-$, and for any $A_{0}, \dots , A_{0k} \in \X$, let us define the function
	\begin{equation}
	h : \begin{array}[t]{rl}
	X^{k+1} & \longrightarrow [0;1] \\
	(y_0,\dots ,y_{k}) & \longmapsto \mathds{1}_{A_0}(y_t) \mathds{1}_{A_{1}} (y_{1}) \dots \mathds{1}_{A_{k}}(y_{k}).
\end{array}
	\end{equation}
In a first instance, we aim to prove that
	\begin{equation} \label{eq:8.6.45}
	\int h(y_t, \dots ,y_{t-k}) \ \pi_t^- (\d y_t^-) = \int \int h(y_t, \dots , y_{t-k}) P(\d y_t \mid y_{t-1}^-) \ \pi_{t-1}^- (\d y_{t-1}^-)
	\end{equation}
i.e.
	\begin{equation} \label{eq:8.6.46}
	\P (Y_t \in A_0 , Y_{t-1}^- \in B_k) = \int_{B_k} P(A_0 \mid y_{t-1}^-) \pi_{t-1}^-(\d y_{t-1}^-),
	\end{equation}
where $B_k$ is the cylinder of $X^{\N}$: $B_k = A_{1} \times \dots \times A_{k} \times X \times \dots $.

Let $\varepsilon >0$, for $m  \in \N$ large enough, $m\geqslant k$, we have $b_m < \varepsilon$ and:
	\begin{equation}
	\forall n \geqslant m, \ d_{TV} \left( \pi_{t-k,k}; \pi_{t-k,k}^{(n,\tilde{x})} \right) + d_{TV}\left( \pi_{t-m,m-1} ; \pi_{t-m,m-1}^{(n,\tilde{x})}\right) < \varepsilon
	\end{equation}
where $\tilde{x}=(x,x,\dots )$ for a fixed element $x \in X$.

In the following, we take any $n \geqslant m$, so that the previous inequality is satisfied.

Let us define
	\begin{equation}
	C_m = \int \int h(y_{n+t},\dots y_{n+t-k}) P(\d y_{n+t} \mid y_{n+t-1}, \dots , y_{n+t-m}, \tilde{x}) \pi_{t-m,m-1}^{(n,\tilde{x})} (\d y_{n+t-m}, \dots ,\d y_{n+t-1}).
	\end{equation}
We have
	\begin{align}
 	& \ \left| C_m - \int h(y_{n+t}, \dots , y_{n+t-k}) \pi_{t-k,k}^{(n,\tilde{x})}(\d y_{n+t-k}, \dots ,\d y_{n+t}) \right| \notag \\
 	= &\ \left| \int \int h(y_{n+t}, \dots ,y_{n+t-k}) P(\d y_{n+t} \mid y_{n+t-1}, \dots y_{n+t-m},\tilde{x})\pi_{t-m,m-1}^{(n,\tilde{x})}(\d y_{n+t-m},\dots ,\d y_{n+t-1}) \right. \notag \\
 	& \hspace*{8cm} \left. - \int h(y_{n+t},\dots ,y_{n+t-k}) \pi_{t-k,k}^{(n,\tilde{x})}(\d y_{n+t-k}, \dots ,\d y_{n+t})\right| \notag \\
= & \ \Biggr| \underbrace{\int \dots \int}_{n+t-1} \left[ \int h(y_{n+t},\dots ,y_{n+t-k}) P(\d y_{n+t} \mid y_{n+t-1},\dots ,y_{n+t-m},\tilde{x})\right. \notag \\
 & \ - \left.\int h(y_{n+t},\dots ,y_{n+t-k}) P(\d y_{n+t} \mid y_{n+t-1}, \dots ,y_{1},\tilde{x}) \right] P(\d y_{n+t-1} \mid y_{n+t-2}, \dots ,y_{1},\tilde{x})\dots P(\d y_{1} \mid \tilde{x}) \Biggr| \notag \\
\leqslant & \underbrace{\int \dots \int}_{n+t-1} b_m P(\d y_{n+t-1} \mid y_{n+t-2}, \dots , y_{1},\tilde{x}) \dots P(\d y_{1} \mid \tilde{x}) \notag \\
= & \ b_m \notag \\
< & \ \varepsilon.
	\end{align}
Furthermore
	\begin{align}
	 & \left| C_m - \int \int h(y_{t},\dots ,y_{t-k}) P(\d y_{t} \mid y_{t-1}, \dots ,y_{t-m},\tilde{x}) \pi_{t-m,m-1}(\d y_{t-m},\dots ,\d y_{t-1})\right| \notag \\
	  \leqslant & \ d_{TV} \left( \pi_{t-m,m-1}^{(n,\tilde{x})};\pi_{t-m,m-1}\right) \notag \\
	   < & \varepsilon.
	\end{align}
Finally, we have for any $y_{t-1}^- \in X^{\N}$
	\begin{equation}
	\left| \int h(y_{t},\dots , y_{t-k}) P(\d y_t \mid y_{t-1}, \dots ,y_{t-m},\tilde{x}) - \int h(y_{t}, \dots , y_{t-k}) P(\d y_{t} \mid y_{t-1}^-) \right|  \leqslant b_m < \varepsilon,
	\end{equation}
which implies
	\begin{align}
& \ \Biggr| \int \int h(y_{t},\dots , y_{t-k}) P(\d y_{t} \mid y_{t-1},\dots ,y_{t-m},\tilde{x})\pi_{t-m,m-1}(\d y_{t-m},\dots ,\d y_{t-1}) \notag \\
 & \hspace*{6cm} -\int \int h(y_{t},\dots ,y_{t-k}) P(y_{t} \mid y_{t-1}^-) \pi_{t-1}^-(\d y_{t-1}^-) \Biggr| \notag \\
= & \ \Biggr| \int \int h(y_t,\dots , y_{t-k}) P( \d y_t \mid y_{t-1},\dots ,y_{t-m},\tilde{x})\pi_{t-1}^-(\d y_{t-1}^-)-\int \int h(y_t,\dots ,y_{t-k}) P(\d y_t \mid y_{t-1}^-) \pi_{t-1}^-(\d y_{t-1}^-) \Biggr| \notag \\
\leqslant & \ \int \left| \int h(y_t,\dots ,y_{t-k}) P( \d y_t \mid y_{t-1},\dots , y_{t-m},\tilde{x}) - \int h(y_t,\dots ,y_{t-k}) P(\d y_t \mid y_{t-1}^-) \right| \pi_{t-1}^-(\d y_{t-1}^-) \notag \\
\leqslant & \ \varepsilon.
	\end{align}
We thus obtain
	\begin{align}
	 & \ \left| \int h(y_t,\dots ,y_{t-k})\pi_{t-k,k} (\d y_{t-k},\dots \d y_t) - \int \int h(y_t,\dots y_{t-k})P(\d y_t \mid y_{t-1}^-) \pi_{t-1}^- (\d y_{t-1}^-)\right| \notag \\
\leqslant & \ \left| \int h(y_t,\dots y_{t-k}) \pi_{t-k,k} (\d y_{t-k},\dots ,\d y_t) - \int h(y_t,\dots y_{t-k}) \pi_{t-k,k}^{(n,\tilde{x})}(\d y_{t-k},\dots \d y_t) \right| \notag \\
 & \quad + \left| \int h(y_{n+t},\dots ,y_{n+t-k})\pi_{t-k,k}^{(n,\tilde{x})}(\d y_{n+t-k}, \dots ,\d y_{n+t}) -C_m \right| \notag \\
 & \quad + \left| C_m - \int \int h(y_t,\dots y_{t-k}) P(\d y_t \mid y_{t-1},\dots y_{t-m},\tilde{x})\pi_{t-m,m-1}(\d y_{t-m},\dots \d y_{t-1}) \right| \notag \\
 & \ \quad + \Biggr| \int \int h(y_t,\dots y_{t-k}) P (\d y_t \mid y_{t-1},\dots y_{t-m},\tilde{x})\pi_{t-m,m-1}(\d y_{t-m},\dots ,\d y_{t-1}) \notag \\
  & \hspace*{6cm} - \int \int h(y_t,\dots y_{t-k}) P(\d y_t \mid y_{t-1}^-) \pi_{t-1}^-(\d y_{t-1}^-) \Biggr| \notag \\
\leqslant & \  4\varepsilon.
	\end{align}
Since $\varepsilon >0$ is arbitrary, this gives us equations \ref{eq:8.6.45} and \ref{eq:8.6.46}.

By an argument of monotone class, equation \ref{eq:8.6.46} then leads to:
	\begin{equation}
	\forall A \in \X, \ \forall B \in \X^{\otimes \N}, \ \P(Y_t \in A, Y_{t-1}^- \in B ) = \int_B P(A  \mid y_{t-1}^-) \pi_{t-1}^-(\d y_{t-1}^-),
	\end{equation}
which means that:
	\begin{equation}
	\forall A \in \X, \ \forall y_{t-1}^- \in X^{\N}, \ \P(Y_t \in A  \ \mid \ Y_{t-1}^-=y_{t-1}^-)= P(A \mid y_{t-1}^-).
	\end{equation}
\end{proof}

\begin{lem}

The process $(Y_t)_{t\in \Z}$ is strictly stationary.

\end{lem}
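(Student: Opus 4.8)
The plan is to deduce strict stationarity from the shift-invariance of all finite-dimensional distributions of $(Y_t)_{t\in\Z}$. By construction the law $\pi$ of the process is determined on cylinders by the family $\left(\pi_{k,l}\right)_{k\in\Z,\,l\in\N}$, and $\pi_{t,l}$ is precisely the law of the window $(Y_t,Y_{t+1},\dots,Y_{t+l})$. Consequently, it suffices to show that $\pi_{k,l}$ does not depend on the starting index $k$, i.e. that $\pi_{k+1,l}=\pi_{k,l}$ for every $k\in\Z$ and $l\in\N$. A monotone class argument, together with the compatibility of the family $\left(\pi_{k,l}\right)$, then upgrades this invariance of contiguous windows to full shift-invariance of $\pi$ on the product $\sigma$-field $\X^{\otimes\Z}$, which is exactly strict stationarity.

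The key observation is a translation identity relating the burn-in index $n$ and the window position $k$ in the pre-limit measures. Using the representation (\ref{eq:8.6.29}), which identifies $\pi_{k,l}^{(n,y)}$ as the law of $(Z_{n+k}^y,\dots,Z_{n+k+l}^y)$ for the forward recursion $(Z_m^y)_{m\geq 1}$ started from the fixed past $y$, I would read off that for every fixed $y\in X^{\N}$, every $n$ with $n+k\geq 1$, and every $l\in\N$,
\begin{equation}
\pi_{k+1,l}^{(n,y)} = \pi_{k,l}^{(n+1,y)},
\end{equation}
since both sides equal the law of $(Z_{n+k+1}^y,\dots,Z_{n+k+1+l}^y)$. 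In words: advancing the window by one step is the same as running the recursion one additional step.

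I would then pass to the limit $n\to\infty$. Fixing any single $y\in X^{\N}$, the total-variation convergence $\pi_{k,l}^{(n,y)}\to\pi_{k,l}$ established in Lemma \ref{convuni} yields $\pi_{k+1,l}^{(n,y)}\to\pi_{k+1,l}$ and $\pi_{k,l}^{(n+1,y)}\to\pi_{k,l}$ as $n\to\infty$. Since the two sequences coincide term by term by the translation identity, their limits agree, giving $\pi_{k+1,l}=\pi_{k,l}$. As $k$ and $l$ are arbitrary, $\pi_{k,l}$ is independent of $k$, which is the desired shift-invariance of the finite-dimensional laws.

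I do not expect a genuine obstacle here, since the heavy lifting was already carried out in constructing the limit measures $\pi_{k,l}$ and establishing their convergence. The only points requiring care are the bookkeeping in the translation identity (checking that the reindexing of the iterated integral defining $\pi_{k,l}^{(n,y)}$ is correct and that the constraint $n+k\geq 1$ is respected for all large $n$) and the concluding monotone class step that transfers invariance of the contiguous windows to invariance of $\pi$ on all cylinders.
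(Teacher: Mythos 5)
Your proof is correct and follows essentially the same route as the paper: the core is the translation identity $\pi_{k+1,l}^{(n,y)}=\pi_{k,l}^{(n+1,y)}$ (the paper states the iterated form $\pi_{t+1,k-1}^{(n,\tilde{x})}=\pi_{1,k-1}^{(n+t,\tilde{x})}$ for a fixed constant past $\tilde{x}$), combined with the total-variation convergence from Lemma \ref{convuni} to identify the limits. The one-step-versus-$t$-step formulation and your explicit monotone class remark are only cosmetic differences.
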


\begin{proof}
Let $t \in \Z$ and $k \in \N^*$, one can show that for any $n \in \N$ such that $n+t \in \N$
	\begin{equation}
	\pi_{t+1,k-1}^{(n, \tilde{x})} = \pi_{1,k-1}^{(n+t,\tilde{x})},
	\end{equation}
where $\tilde{x}=(x,x,\dots )$ for a fixed $x \in X$.

Therefore, we have for any $h \in \mathcal{C}_b(X^k, \R)$
	\begin{align}
	\E \left( h(Y_{t+1},\dots , Y_{t+k}) \right) & \int h(y_1,\dots , y_k) \pi_{t+1,k-1}(\d y_{1},\dots , \d y_{k}) \notag \\
	 & = \lim_{n \to +\infty} \int h(y_1, \dots y_k) \pi_{t+1,k-1}^{(n,\tilde{x})} (\d y_1 , \dots , \d y_k) \notag \\
	 & = \lim_{n \to +\infty} \int h(y_1,\dots ,y_k) \pi_{1,k-1}^{(n+t,\tilde{x})}(\d y_1 ,\dots , \d y_k) \notag \\
	 & = \int h(y_1,\dots , y_k) \pi_{1,k-1}(\d y_1, \dots , \d y_k) \notag \\
	 & = \E \left( h (Y_1,\dots , Y_k) \right).
	\end{align}
Indeed, when considering a Borel $\sigma$-field, the convergence in total variation implies the convergence in distribution.

We thus obtain the desired result.
\end{proof}

We now show the uniqueness of a process satisfying the desired conditions mentioned in Theorem \ref{theo:2.1}.

\begin{lem}

The process $(Y_t)_{t\in \Z}$ is unique in the sense that, if $(Z_t)_{t \in \Z}$ is another strictly stationary process such that
	\begin{equation}
	\forall t \in \Z, \forall A \in \X, \  \ \P \left( Z_t \in A \ \mid \ Z_{t-1}^- = z_{t-1}^- \right) = P(A \mid z_{t-1}^-),
	\end{equation}
then $\P_Z = \pi$.

\end{lem}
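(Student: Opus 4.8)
The plan is to prove that $(Z_t)_{t\in\Z}$ shares all finite-dimensional distributions with the constructed process, i.e. that for every $k\in\Z$ and $l\in\N$ the law of $(Z_k,\ldots,Z_{k+l})$ equals $\pi_{k,l}$. Since a probability measure on $X^{\Z}$ is determined by its values on cylinders through a monotone class argument, identifying every finite-dimensional marginal forces $\P_Z=\pi=\P_Y$.

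To identify the law of $(Z_k,\ldots,Z_{k+l})$, the idea is to condition on a remote past and then send that past to infinity. Fix a measurable $h:X^{l+1}\to[0,1]$ and an integer $N\geq 0$. Conditioning on $Z_{k-1-N}^-$ and using the defining kernel property of $Z$ iteratively --- the tower property builds up $Z_{k-N},Z_{k-N+1},\ldots$ one coordinate at a time, each step contributing a factor $P(\cdot\mid\text{current past})$ --- shows that the conditional law of $(Z_k,\ldots,Z_{k+l})$ given $Z_{k-1-N}^-=y$ is exactly the forward window started from $y$, which by (\ref{eq:8.6.29}) equals $\pi_{k,l}^{(N+1-k,y)}$. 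Writing $\nu_N$ for the (well-defined) law of $Z_{k-1-N}^-$, we obtain
$$\E\big[h(Z_k,\ldots,Z_{k+l})\big]=\int\left(\int h\,d\pi_{k,l}^{(N+1-k,y)}\right)\nu_N(dy).$$

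Now let $N\to\infty$. By Lemma \ref{convuni}, $\sup_{y\in X^{\N}}d_{TV}\big(\pi_{k,l}^{(n,y)},\pi_{k,l}\big)\to 0$ as $n\to\infty$, so with $n=N+1-k$ and $|h|\leq 1$ the inner integral converges to $\int h\,d\pi_{k,l}$ uniformly in $y$. As each $\nu_N$ is a probability measure, the outer integral converges to the same limit, giving $\E[h(Z_k,\ldots,Z_{k+l})]=\int h\,d\pi_{k,l}$. Letting $h$ run over indicators of measurable rectangles identifies the law of $(Z_k,\ldots,Z_{k+l})$ with $\pi_{k,l}$, whence $\P_Z=\pi$.

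I expect the main obstacle to be the interchange of limit and expectation in the last step: the law $\nu_N$ of the conditioning variable is exactly the object we are trying to determine, so no property of it may be used beyond its being a probability measure. This is precisely where the uniformity in $y$ from Lemma \ref{convuni} is indispensable, as it lets the convergence pass through the unknown mixing measure $\nu_N$. A secondary, routine point is the rigorous verification that conditioning on the infinite past $Z_{k-1-N}^-$ and iterating the one-point kernels reproduces the forward law $\pi_{k,l}^{(N+1-k,y)}$, which follows from the tower property exactly as in the derivation of (\ref{eq:8.6.29}).
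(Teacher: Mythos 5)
Your proposal is correct and takes essentially the same route as the paper: condition on a remote past, identify the conditional law of the window as $\pi_{k,l}^{(n,\,\cdot)}$ by iterating the one-point kernel through the tower property, and pass to the limit using the uniform-in-$y$ total-variation convergence of Lemma \ref{convuni}, which (as you correctly stress) is exactly what lets the limit pass through the unknown law of the remote past. The only cosmetic difference is that the paper first uses stationarity of $(Z_t)$ to reduce to the canonical window $(Z_1,\dots,Z_k)$ and conditions on $Z_{-n}^-$, whereas you handle an arbitrary window $(Z_k,\dots,Z_{k+l})$ directly.
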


\begin{proof}
Since $(Z_t)_{t\in \Z}$ is strictly stationary, it is sufficient to show that for all $k \in \N^*$
	\begin{equation}
	(Z_1,\dots , Z_k) \sim \pi_{1,k-1}.
	\end{equation}
Consider $h \in \mathcal{C}_b (X^k , \R)$, we have for all $n \in \N$
	\begin{equation}
	\E \left( h(Z_1,\dots , Z_k) \right) = \E \left( \E \left( h(Z_1,\dots , Z_k) \ \mid Z_{-n}^- \right) \right).
	\end{equation}
However
	\begin{equation}
	\E \left( h(Z_1,\dots , Z_k) \mid Z_{-n}^-\right) = \underbrace{\int_X \dots \int_X}_{n+k} h(z_1,\dots , z_k) P(\d z_k \mid z_{k-1},\dots , z_{-n+1},Z_{-n}^-) \dots P(\d z_{-n+1} \mid Z_{-n}^-),
	\end{equation}
thus
	\begin{equation}
	\E \left( \E \left( h(Z_1,\dots , Z_k) \ \mid \ Z_{-n}^- \right) \right) = \E \left( \int h(z_1,\dots , z_k) \pi_{1,k-1}^{(n,Z_{-n}^-)}(\d z_1,\dots , \d z_k) \right).
	\end{equation}
By Lemma \ref{convuni} we have $ \displaystyle{ \underset{y \in \X^{\N}}{\sup} \ d_{TV} \left( \pi_{1,k-1}^{(n,y)} ; \pi_{1,k-1} \right) \underset{n \to +\infty}{\longrightarrow} 0}$, thus
	\begin{equation}
	\lim_{n \to +\infty} \int h(z_1,\dots , z_k) \pi_{1,k-1}^{(n,Z_{-n}^-)}(\d z_1,\dots, \d z_k) = \int h(z_1,\dots, z_k) \pi_{1,k-1}(\d z_1,\dots , \d z_k),
	\end{equation}
since the convergence in total variation implies the convergence in distribution.

Furthermore, since $h$ is a bounded function, we have
	\begin{equation}
	\int h(z_1,\dots , z_k) \pi_{1,k-1}^{(n,Z_{-n}^-)}(\d z_1,\dots, \d z_k) \leqslant \int \| h\|_{\infty} \pi_{1,k-1}^{(n,Z_{-n}^-)} (\d z_1, \dots \d z_k) = \| h \|_{\infty}.
	\end{equation}
Therefore, we have by dominated convergence
	\begin{equation}
	\lim_{n \to +\infty} \E \left( \E \left( h(Z_1,\dots , Z_k) \ \mid \ Z_{-n}^- \right) \right) = \E \left( h(Y_1,\dots , Y_k) \right)=\int h d\pi_{1,k-1},
	\end{equation}
which is the desired result.
\end{proof}

We end the proof of Theorem \ref{theo:2.1} with the following lemma.

\begin{lem}
The process $(Y_t)_{t\in \Z}$ is ergodic.
\end{lem}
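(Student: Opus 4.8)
The plan is to establish the stronger property of \emph{mixing}, from which ergodicity follows immediately. Writing $T$ for the shift on $X^{\Z}$, it suffices to prove that
$$\P\left(A\cap T^{-k}B\right)\xrightarrow[k\to+\infty]{}\P(A)\P(B)$$
for every pair of cylinder events $A,B$, since the cylinders form an algebra generating $\X^{\otimes\Z}$ and asymptotic independence on a generating algebra extends to the whole $\sigma$-field by a routine approximation argument; equivalently, the Cesàro averages of $\P(A\cap T^{-k}B)$ then converge to $\P(A)\P(B)$, which is the classical criterion for ergodicity of a stationary process. I therefore fix two cylinders: $A$ depending on the coordinates of index at most $p$, and $B=\{(Y_1,\dots,Y_q)\in B_1\times\cdots\times B_q\}$ (a general cylinder reduces to this form by stationarity of the shift). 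For $k$ large the window of $T^{-k}B$ lies entirely after time $p$.

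Next I condition on the past. With $\mathcal{F}_p=\sigma(Y_j:\ j\leq p)=\sigma\left(Y_p^-\right)$ and the tower property,
$$\P\left(A\cap T^{-k}B\right)=\E\left[\mathds{1}_A\,\P\left(Y_{k+1}\in B_1,\dots,Y_{k+q}\in B_q\mid \mathcal{F}_p\right)\right].$$
Iterating the one-point conditional property $\P(Y_t\in\cdot\mid Y_{t-1}^-=y)=P(\cdot\mid y)$ established above, the conditional law of the block $(Y_{k+1},\dots,Y_{k+q})$ given $\mathcal{F}_p$ depends on the past only through $Y_p^-$ and is obtained by running the kernel $P$ forward from that past. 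In the notation of the measures $\pi_{k,l}^{(n,y)}$, this conditional law is exactly $\pi_{1,q-1}^{(k-p,\,Y_p^-)}$, the length-$q$ block starting $k-p+1$ steps after the conditioning time $p$.

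I then invoke the uniform forgetting property, Lemma \ref{convuni}, which gives $\sup_{y\in X^{\N}}d_{TV}\left(\pi_{1,q-1}^{(n,y)};\pi_{1,q-1}\right)\to0$ as $n\to+\infty$. Since $k-p\to+\infty$, this yields $\P\left(Y_{k+1}\in B_1,\dots,Y_{k+q}\in B_q\mid\mathcal{F}_p\right)\to\pi_{1,q-1}(B_1\times\cdots\times B_q)$ uniformly, hence almost surely and in $L^1$; and by the very definition of $\pi$ one has $\pi_{1,q-1}(B_1\times\cdots\times B_q)=\P(B)$. Bounded convergence applied to the displayed expectation then gives $\P(A\cap T^{-k}B)\to\P(A)\P(B)$, which is the desired mixing relation on cylinders and completes the proof of ergodicity.

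The genuinely delicate points are two, both bookkeeping rather than conceptual. First, one must check carefully that the $\mathcal{F}_p$-conditional law of a future block coincides with $\pi_{1,q-1}^{(k-p,Y_p^-)}$: this requires iterating the one-point kernel identity and matching the index conventions of the $\pi_{k,l}^{(n,y)}$. Second, the passage from cylinders to the full product $\sigma$-field, or equivalently the verification of the Cesàro ergodicity criterion on the generating algebra, is standard but should be stated explicitly. The analytic heart — the decay that forces the future to forget the past — has already been supplied by Lemma \ref{convuni}, so no new estimate is needed here.
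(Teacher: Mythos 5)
Your proof is correct, and its analytic core coincides with the paper's: both establish mixing on cylinders, $\P(A\cap \sigma^{-m}(B))\to \P(A)\P(B)$, as a consequence of the uniform forgetting property of Lemma \ref{convuni}, then extend from cylinders to $\X^{\otimes \Z}$ by approximation and conclude via the invariant-set computation $\pi(A)=\pi(A\cap\sigma^{-m}(A))\to\pi(A)^2$. Where you genuinely differ is in how the cylinder case is computed. The paper never conditions the constructed process on its infinite past: it works entirely at the level of the approximating measures, writing the long block as $\pi_{k,l+m+t+1}^{(n,\tilde{x})}$, recognizing the inner integral as $\pi_{1,t}^{(m,\tilde{y})}$ with $\tilde{y}$ the realized finite past concatenated with $\tilde{x}$, and passing to a double limit, first $m\to\infty$ (dominated convergence together with the uniformity in the past supplied by Lemma \ref{convuni}) and then $n\to\infty$. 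You instead condition the stationary process itself on $\mathcal{F}_p=\sigma(Y_p^-)$, identify the conditional law of the future block $(Y_{k+1},\dots,Y_{k+q})$ with $\pi_{1,q-1}^{(k-p,\,Y_p^-)}$, and apply Lemma \ref{convuni} once; your index bookkeeping is right, since with $Z_j^{y}$ corresponding to $Y_{p+j}$ one has $(Z_{n+1}^{y},\dots,Z_{n+q}^{y})\sim\pi_{1,q-1}^{(n,y)}$ with $n=k-p$. This buys a single-limit argument whose probabilistic meaning (the future forgets the past uniformly) is visible up front, at the cost of the iterated-kernel identification, which does require the short measure-theoretic induction you flag: each step uses the tower property and the identity (\ref{eq:2.1.10}) established in the preceding lemma, so the induction goes through, and it is legitimate that the identification holds only almost surely since that suffices under the expectation. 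Both routes rest on the same key estimate, so neither is more general, but yours is arguably the cleaner presentation while the paper's avoids any appeal to conditional expectations given an infinite past.
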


\begin{proof}
Consider two cylinders of $\X^{\Z}$
	\begin{equation}
	A=\dots X \times A_k \times \dots \times A_{k+l} \times X \dots
	\end{equation}
and
	\begin{equation}
	B=\dots X \times B_s \times \dots \times B_{s+t} \times X \dots
	\end{equation}
where $k,s \in \Z$ and $l, t \in \N$.

Let $n, m \in \N$ such that $n >-k$, we have for $\tilde{x}=(x,x,\dots)$ a fixed element in $X^{\N}$
	\begin{align} \label{eq:8.6.68}
	 & \pi_{k,l+m+t+1}^{(n,\tilde{x})}(A_k \times \dots \times A_{k+l} \times \underbrace{X \times \dots \times X}_{m} \times B_s\times \dots \times B_{s+t}) \notag \\	 
	= & \underbrace{\int_X \dots \int_X}_{n+k-1} \int_{A_k} \dots \int_{A_{k+l}} \underbrace{\int_X \dots \int_X}_{m} \int_{B_s} \dots \int_{B_{s+t}} P(\d y_{n+k+l+m+t+1} \mid y_{n+k+l+m+t},\dots , y_{1},\tilde{x}) \notag \\
	 & \hspace*{2cm} \dots P(\d y_{n+k+l+1} \mid y_{n+k+l},\dots ,y_{1},\tilde{x}) P(\d y_{n+k+l} \mid y_{n+k+l-1},\dots , y_{1},\tilde{x})\dots P(\d y_{1} \mid \tilde{x}) \notag \\
	= & \int_X \dots \int_X \int_{A_k} \dots \int_{A_{k+l}} \pi_{1,t}^{(m,\tilde{y})}(B_s \times \dots B_{s+t}) P(\d y_{n+k+l} \mid y_{n+k+l-1},\dots ,y_{1},\tilde{x}) \dots P(\d y_{1} \mid \tilde{x}) \notag \\
        & \hspace*{8cm} \text{where} \ \tilde{y} = (y_{n+k+l},\ldots ,y_1,\tilde{x}) \notag \\
	\underset{m \to +\infty}{\longrightarrow} & \int_X \dots \int_X \int_{A_k}\dots \int_{A_{k+l}}P(\d y_{n+k+l} \mid y_{n+k+l-1},\dots , y_{1}, \tilde{x})\dots P(\d y_{1} \mid \tilde{x}) \times \pi_{1,t}(B_s\times \dots \times B_{s+t}) \notag \\
	& \hspace*{8cm} \text{by dominated convergence} \notag \\
	= & \pi_{k,l}^{(n,\tilde{x})}(A_k\times \dots \times A_{k+l})\times \pi_{s,t}(B_s\times \dots \times B_{s+t}) \hspace*{1cm} \text{since} \ \pi_{1,t} = \pi_{s,t} \notag \\
	\underset{n\to +\infty}{\longrightarrow} & \pi_{k,l}(A_k\times \dots \times A_{k+l})\times \pi_{1,t}(B_s\times \dots \times B_{s+t}) \notag \\
	= & \pi(A) \times \pi(B).
	\end{align}
However
	\begin{align} \label{eq:8.6.69}
	\pi_{k,l+m+t+1}^{(n,\tilde{x})}(A_k \times \dots \times A_{k+l} \times \underbrace{X \times \dots \times X}_{m} \times B_s\times \dots \times B_{s+t}) \notag \\
	\underset{n \to + \infty}{\longrightarrow} \underbrace{\pi_{k,l+m+t+1}(A_k\times \dots \times A_{k+l}\times X \times \dots \times X \times B_{s} \times \dots \times B_{s+t})}_{=\pi\left( A \cap \sigma^{-m} ( B)\right)}.
	\end{align}
 Defining the shift operator $\sigma$ on $X^{\Z}$ by
$$\sigma \cdot y = \left( y_{t+1}\right)_{t\in \Z},$$
and combining equations \ref{eq:8.6.68} and \ref{eq:8.6.69} we obtain
	\begin{equation}
	\lim_{m \to +\infty} \pi \left(A \cap \sigma^{-m} (B)\right) = \pi(A) \times \pi(B)
	\end{equation}
for any cylinders $A, B \in \X^{\Z}$.

Using approximations of measurable sets by cylinders, we obtain the same result for any $A, B$ in $\X^{\otimes \Z}$.

Consider now $A \in \X^{\otimes \Z}$ invariant for $\pi$, that is $\sigma (A) = A$. We have for any $m \in \N$
	\begin{equation}
	\pi(A) = \pi(A \cap \sigma^{-m}(A)) \underset{m \to +\infty}{\longrightarrow} \pi(A)^2.
	\end{equation}
Thus, $\pi(A) \in \left\lbrace 0 , 1 \right\rbrace$, and $(Y_t)_{t\in \Z}$ is ergodic.
\end{proof}

\begin{lem}\label{DirichletLip}
For some $\alpha\in (0,\infty)^d$, let $\nu^{(d)}_{\alpha}$ be the Dirichlet distribution of parameter $\alpha$. For $1\leq i\leq d$, let 
$0<m_i<M_i$. There then exists $K>0$ such that 
			\begin{equation}
				\forall \alpha, \alpha' \in \prod_{i=1}^d [m_i,M_i], \ d_{TV}\left( \nu_{\alpha} ; \nu_{\alpha'}\right) \leqslant K \vert \alpha - \alpha'\vert.
			\end{equation}
\end{lem}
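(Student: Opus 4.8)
The plan is to bound the total variation distance by the $L^1$ distance of the densities and then control the latter by a mean value inequality along the segment joining $\alpha$ and $\alpha'$, which stays inside the convex box $\prod_{i=1}^d[m_i,M_i]$. Writing $d_{TV}(\nu_\alpha,\nu_{\alpha'})=\tfrac12\int|f_\alpha-f_{\alpha'}|\,d\xi$, the whole task reduces to showing that $\alpha\mapsto f_\alpha(x)$ has a gradient that is dominated, uniformly over the box, by a single $\xi$-integrable function of $x$.

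First I would compute the gradient. Since $\log f_\alpha(x)=\log\Gamma\big(\sum_{i=1}^d\alpha_i\big)-\sum_{i=1}^d\log\Gamma(\alpha_i)+\sum_{i=1}^d(\alpha_i-1)\log x_i$, writing $\psi=(\log\Gamma)'$ for the digamma function gives
$$\frac{\partial f_\alpha}{\partial\alpha_j}(x)=f_\alpha(x)\left(\psi\Big(\sum_{i=1}^d\alpha_i\Big)-\psi(\alpha_j)+\log x_j\right).$$
On the compact box the sum $\sum_i\alpha_i$ ranges over a compact subset of $(0,\infty)$, so the digamma terms are uniformly bounded; hence there is a constant $C_1$ such that $\vert\nabla_\alpha f_\alpha(x)\vert\le C_1\,f_\alpha(x)\big(1+\sum_{j=1}^d\vert\log x_j\vert\big)$ for every $\alpha$ in the box.

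Next I would bound $f_\alpha(x)$ uniformly over the box. The reciprocal normalizing constant $\Gamma(\sum_i\alpha_i)/\prod_i\Gamma(\alpha_i)$ is continuous and positive, hence bounded above on the box, and since each $x_i\in(0,1)$ the map $t\mapsto x_i^{t}$ is decreasing, so $x_i^{\alpha_i-1}\le x_i^{m_i-1}$. Consequently $\sup_{\alpha}f_\alpha(x)\le C_2\prod_{i=1}^d x_i^{m_i-1}$, which is proportional to the density of the Dirichlet distribution with parameter $m=(m_1,\dots,m_d)$. Combining this with the mean value inequality (the segment $[\alpha,\alpha']$ lying in the box by convexity), I obtain $\vert f_\alpha(x)-f_{\alpha'}(x)\vert\le\vert\alpha-\alpha'\vert\,C_3\prod_{i=1}^d x_i^{m_i-1}\big(1+\sum_{j=1}^d\vert\log x_j\vert\big)$, and integrating against $\xi$ yields the claim with $K=\tfrac12 C_3\int\prod_{i=1}^d x_i^{m_i-1}\big(1+\sum_{j=1}^d\vert\log x_j\vert\big)\,d\xi$.

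The one point that requires care, and the main obstacle, is the finiteness of this last integral: near the faces of the simplex both $x_i^{m_i-1}$ (when $m_i<1$) and $\log x_j$ are singular. This is settled by recognizing $\prod_i x_i^{m_i-1}$ as a constant multiple of the Dirichlet$(m)$ density and using that $\E\big[-\log X_j\big]=\psi\big(\sum_i m_i\big)-\psi(m_j)<\infty$ under that law, equivalently the elementary fact that $\int_0^1 x^{m-1}\vert\log x\vert\,dx<\infty$ for every $m>0$. Finally, since all norms on $\R^d$ are equivalent, the estimate persists for the arbitrary norm $\vert\cdot\vert$ of the statement after adjusting the constant $K$.
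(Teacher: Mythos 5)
Your proof is correct, and its skeleton coincides with the paper's: bound $d_{TV}$ by $\tfrac12\int|f_\alpha-f_{\alpha'}|\,d\xi$, apply the mean value inequality along the segment joining $\alpha$ and $\alpha'$ inside the convex box, and dominate the gradient uniformly by $C\prod_{i=1}^d x_i^{m_i-1}\bigl(1+\sum_{j=1}^d|\log x_j|\bigr)$; your logarithmic-differentiation form of the gradient, $\partial_{\alpha_j}f_\alpha=f_\alpha\bigl(\psi(\sum_i\alpha_i)-\psi(\alpha_j)+\log x_j\bigr)$, is just a cleaner rendering of the paper's quotient-rule expression, and the uniform bounds on the digamma and normalizing-constant terms over the compact box match the paper's constants $M_1,M_2$. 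The genuine divergence is in the crux, the finiteness of $\int\prod_i x_i^{m_i-1}|\log x_j|\,d\xi$: the paper proves this from scratch by an induction on the dimension $d$ (a change of variables reduces $I_1^{(d+1)}$ to $I_d^{(d)}$ and $I_1^{(2)}$, with the $d=2$ base case handled by Bertrand integrals), and this induction occupies the bulk of its proof, whereas you recognize $\prod_i x_i^{m_i-1}$ as proportional to the Dirichlet$(m)$ density and invoke the finite log-moment $\E[-\log X_j]=\psi\bigl(\sum_i m_i\bigr)-\psi(m_j)$ --- equivalently, marginalize to the $\mathrm{Beta}\bigl(m_j,\sum_{i\neq j}m_i\bigr)$ law of $X_j$, where the integrand behaves like $x^{m_j-1}|\log x|$ near $0$ and like $(1-x)^{\sum_{i\neq j}m_i}$ near $1$, both integrable. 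Your route buys brevity at the price of citing (or separately deriving) the digamma moment identity for the Dirichlet, while the paper's induction is self-contained and elementary; both are valid, and your one-dimensional reduction is in effect a compact repackaging of the same singularity analysis that powers the paper's base case.
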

	
\begin{proof}
Without loss of generality, we assume that $\vert\cdot\vert$ is the infinite norm $\Vert\cdot\Vert_{\infty}$.
	Let us first recall that if $\gamma_1, \ \gamma_2$ are two probability measures with $\gamma_1 = h_1 \cdot \nu$ and $\gamma_2=h_2\cdot \nu$ where $\nu$ is a positive measure, then
		\begin{equation}
			d_{TV}(\gamma_1 , \gamma_2 ) = \dfrac{1}{2} \int | h_1-h_2 | \ \d \nu.
		\end{equation}
	Consequently, we have for any $\alpha, \alpha'$ in $\prod_{i=1}^d [m_i ; M_i]$:
	\begin{multline}
		d_{TV}\left( \nu_{\alpha}^{(d)} ; \nu_{\alpha'}^{(d)} \right) = \frac{1}{2} \mathlarger{\mathlarger{\mathlarger{\int}}}_{B_{d-1}} \left| \dfrac{\Gamma\left( \sum_{i=1}^d \alpha_i \right)}{\Gamma(\alpha_1)\dots \Gamma (\alpha_d)} \prod_{i=1}^{d-1} x_i^{\alpha_i-1} \left( 1- \sum_{i=1}^{d-1}x_i\right)^{\alpha_d-1}\right. \\  - \left. \dfrac{\Gamma\left( \sum_{i=1}^d \alpha'_i \right)}{\Gamma(\alpha'_1)\dots \Gamma (\alpha'_d)} \prod_{i=1}^{d-1} x_i^{\alpha'_i-1} \left( 1- \sum_{i=1}^{d-1}x_i\right)^{\alpha'_d-1} \right| \ \d x_1 \dots \d x_{d-1}.
	\end{multline}
	
	For $x \in B_{d-1}$ fixed, let us denote the application:
		\begin{equation}
			\psi_x : \begin{array}[t]{l}
				\prod_{i=1}^d [m_i ; M_i] \longrightarrow \R_+ \\
				\alpha \longmapsto \dfrac{\Gamma\left( \sum_{i=1}^d \alpha_i \right)}{\Gamma(\alpha_1)\dots \Gamma (\alpha_d)} \prod_{i=1}^{d-1} x_i^{\alpha_i-1} \left( 1- \sum_{i=1}^{d-1}x_i\right)^{\alpha_d-1}
			\end{array}
		\end{equation}
	which is $\mathcal{C}^1$, and we have:
		\begin{equation}
			d_{TV}\left( \nu_{\alpha}^{(d)} ; \nu_{\alpha'}^{(d)} \right) = \dfrac{1}{2} \int_{B_{d-1}} \left| \psi_x(\alpha) - \psi_x(\alpha')\right| \ \d x_1 \dots \d x_{d-1}.
		\end{equation}
	Take any $j \in \left\lbrace 1 , \dots d-1 \right\rbrace$, denoting $S_{\alpha} = \sum_{i=1}^d \alpha_i$ and $x_d=1-\sum_{i=1}^{d-1} x_i$, we have:
		\begin{equation}
			\dfrac{\partial \psi_x}{\partial \alpha_j}(\alpha) = \prod_{i=1}^d x_i^{\alpha_i-1} \times \left( \dfrac{\Gamma'(S_{\alpha})\Gamma(\alpha_1) \dots \Gamma (\alpha_d)-\Gamma (S_{\alpha})\Gamma'(\alpha_j)\prod_{i \neq j}\Gamma(\alpha_i)}{\left( \Gamma (\alpha_1) \dots \Gamma (\alpha_d)\right)^2}+ \dfrac{\Gamma(S_{\alpha})\log (x_j)}{\Gamma(\alpha_1)\dots \Gamma (\alpha_d)}\right).
		\end{equation}
	Because the Gamma function $\Gamma$ is $\mathcal{C}^{\infty}$ on $]0 ; +\infty [$ and $\prod_{i=1}^d [m_i ; M_i]$ is compact, there exists a couple of positive numbers $(M_1,M_2)$ such that:
		\begin{align}
			\left| \dfrac{\partial \psi_x}{\partial \alpha_j}(\alpha) \right| & \leqslant \prod_{i=1}^d x_i^{\alpha_i-1} \times \left| M_1+M_2 \log (x_j)\right| \\
			& \leqslant \underbrace{M_1\prod_{i=1}^dx_i^{m_i-1}+M_2\prod_{i=1}^d x_i^{m_i-1}\cdot \left| \log (x_j)\right|}_{C_j(x)}.
		\end{align}
	Note that a similar inequality is also satisfied for $j=d$.
	
	By the mean value inequality, we obtain
		\begin{equation}
			\left| \psi_x(\alpha)-\psi_x(\alpha')\right| \leqslant \underset{\alpha}{\sup} \ \vertiii{D_{\psi_x}(\alpha)} \cdot \| \alpha - \alpha'\|_{\infty}
		\end{equation}
	where $D_{\psi_x}$ denotes the differential of $\psi_x$ and:
		\begin{equation}
			\vertiii{D_{\psi_x}(\alpha)} = \underset{h \in \R^d}{\sup} \ \dfrac{\left| \sum_{j=1}^d \dfrac{\partial \psi_x}{\partial \alpha_j}(\alpha) \cdot h_j\right|}{\| h\|_{\infty}} \leqslant \underset{h\in \R^d}{\sup} \ \dfrac{\sum_{j=1}^d C_j(x)\cdot |h_j|}{\| h \|_{\infty}} \leqslant \sum_{j=1}^d C_j (x).
		\end{equation}	
	Therefore,
		\begin{equation}
			d_{TV}\left(\nu_{\alpha}^{(d)} ; \nu_{\alpha'}^{(d)}\right) \leqslant \dfrac{\|\alpha - \alpha'\|_{\infty}}{2} \mathlarger{\int}_{B_{d-1}} \sum_{j=1}^d C_j(x) \ \d x_1 \dots \d x_{d-1}.
		\end{equation}
	Our goal now is to show that for any $j \in \left\lbrace 1 , \dots , d \right\rbrace, \ \int_{B_{d-1}} C_j(x) \d x_1 \dots \d x_{d-1} < + \infty$.
	
	We then have the desired result by taking $K = 1+\dfrac{1}{2}\sum_{j=1}^d \int_{B_{d-1}} C_j(x) \d x_1 \dots \d x_{d-1}$ for instance.
	
	\medskip
	
	For any $j \in \left\lbrace 1, \dots d \right\rbrace$, we have:
	
	\begin{align}
		\int_{B_{d-1}} C_j(x) \d x_1 \dots \d x_{d-1} & = M_1\int_{B_{d-1}} \prod_{i=1}^d x_i^{m_i-1} \d x_1 \dots \d x_{d-1} + M_2 \int_{B_{d-1}} \prod_{i=1}^d x_i^{m_i-1} \left| \log (x_j) \right| \d x_1 \dots \d x_{d-1} \\
		& = M_1 \dfrac{\Gamma(m_1) \dots \Gamma (m_d)}{\Gamma \left( \sum_{i=1}^d m_i \right)}+ M_2 \underbrace{\int_{B_{d-1}} \prod_{i=1}^d x_i^{m_i-1} \left| \log (x_j) \right| \d x_1 \dots \d x_{d-1}}_{I_j^{(d)}(m_1,\dots , m_d)}.
	\end{align}
	
	We thus have to show by induction the following property:
	
	\begin{equation}
	\forall d \geqslant 2, \ H_d : \forall j \in \left\lbrace 1, \dots , d \right\rbrace, \forall m_1,\dots , m_d >0, \ I_j^{(d)}(m_1,\dots , m_d) <+ \infty.
	\end{equation}
	
	\begin{enumerate}[label=$\triangleright$]
		\item $H_2$ is true, we have indeed
			\begin{equation}
				I_1^{(2)} = \int_0^1 x_1^{m_1-1}(1-x_1)^{m_2-1} | \log (x_1)| \ \d x_1.
			\end{equation}
		Yet, on the neighborhood of 0:
			\begin{equation}
				x_1^{m_1-1}(1-x_1)^{m_2-1}|\log(x_1)| \sim x_1^{m_1-1} |\log(x_1)|
			\end{equation}
		and $\int_0 ^1 x_1^{m_1-1} | \log(x_1) |\ \d x_1$ is a convergent Bertrand integral.
		On the neighborhood of 1:
			\begin{equation}
				x_1^{m_1-1}(1-x_1)^{m_2-1}|\log (x_1)| \sim (1-x_1)^{m_2-1} | \log (x_1) |
			\end{equation}
		and $\int_{\frac{1}{2}}^1 (1-x_1)^{m_2-1} | \log (x_1) | \ \d x_1 \leqslant \log (2) \int_{\frac{1}{2}}^1 (1-x_1)^{m_2-1}\ \d x_1 <+\infty$.
		
		Thus, $I_1^{(2)}(m_1,m_2)<+\infty$.
		
		We show that $I_2^{(2)} <+\infty$ by noticing that:
			\begin{equation}
				I_2^{(2)} = \int_0^1 x_1^{m_1-1}(1-x_1)^{m_2-1}| \log (1-x_1)| \ \d x_1 = \int_0^1 (1-u)^{m_1-1}u^{m_2-1}| \log (u)| \ \d u = I_1^{(2)}(m_2,m_1)
			\end{equation}
		and this last quantity is finite.
		\item Assume that $H_d$ is true for some $d \geqslant 2$ fixed, and let us show that $H_{d+1}$ is also true.
		
		Let $m_1, \dots , m_{d+1} >0$ and $ j \in \left\lbrace 1, \dots , d \right\rbrace$, we have
		\begin{equation}
			I_j^{(d+1)}(m_1,\dots ,m_{d+1}) = \int_{B_{d}} \prod_{i=1}^d x_i^{m_i-1}\left( 1- \sum_{i=1}^d x_i \right)^{m_{d+1}-1} | \log(x_j)| \ \d x_1 \dots \d x_d.
		\end{equation}
	Without any loss of generality, we can set $j=1$ event if it means permuting the indices, and we have:
	\begin{align}
		& I_1^{(d+1)}(m_1,\dots ,m_{d+1}) \notag \\
		=& \int_{B_d} x_1^{m_1-1}\left( 1- \sum_{i=2}^dx_i -x_1 \right)^{m_{d+1}-1} | \log(x_1)| \prod_{i=2}^d x_i^{m_i-1} \ \d x_1 \dots \d x_d \notag \\
		=& \int_{B_d} x_1^{m_1-1} \left( 1- \sum_{i=2}^d x_i\right)^{m_{d+1}-1}\left( 1 - \dfrac{x_1}{1-\sum_{i=2}^d x_i}\right)^{m_{d+1}-1} \times |\log (x_1)| \prod_{i=2}^d x_i^{m_i-1} \ \d x_1\dots \d x_d \notag \\
		=& \int_{B_{d-1}} \prod_{i=2}^d x_i^{m_i-1}\left( 1-\sum_{i=2}^dx_i\right)^{m_{d+1}-1} \notag \\
		& \hspace*{1cm} \times \int_0^{1-\sum_{i=2}^d x_i} x_1^{m_1-1}\left( 1- \dfrac{x_1}{1-\sum_{i=2}^dx_i}\right)^{m_{d+1}-1} |\log (x_1)| \d x_1 \  \d x_2 \dots \d x_d \notag \\
		=& \int_{B_{d-1}} \prod_{i=2}^d x_i^{m_i-1}\left( 1-\sum_{i=2}^d x_i\right)^{m_1+m_{d+1}-1} \notag \\
		& \hspace*{1cm} \times \int_0^1 u^{m_1-1}(1-u)^{m_{d+1}-1}\left| \log \left( \left(1-\sum_{i=2}^d x_i \right)u\right)\right| \d u \ \d x_{2} \dots \d x_d \notag \\
		\leqslant &  \int_{B_{d-1}} \prod_{i=2}^d x_i^{m_i-1}\left( 1- \sum_{i=2}^d x_i\right)^{m_1+m_{d+1}-1}\left| \log \left( 1-\sum_{i=2}^d x_i\right) \right| \int_0^1 u^{m_1-1}(1-u)^{m_{d+1}-1} \d u \ \d x_2 \dots \d x_d \notag \\
		& + \int_{B_{d-1}} \prod_{i=2}^d x_i^{m_i-1}\left( 1- \sum_{i=2}^d x_i\right)^{m_1+m_{d+1}-1}\int_0^1 u^{m_1-1}(1-u)^{m_{d+1}-1}|\log(u)|\d u \ \d x_2 \dots \d x_d \notag \\
		= & \beta(m_1,m_{d+1}) \int_{B_{d-1}} \prod_{i=2}^d x_i^{m_i-1}\left( 1- \sum_{i=2}^d \right)^{m_1+m_{d+1}-1}\left| \log \left( 1-\sum_{i=2}^d x_i\right) \right| \d x_2 \dots \d x_d \notag \\
		& + I_1^{(2)}(m_1,m_{d+1}) \int_{B_{d-1}} \prod_{i=2}^d x_i^{m_i-1}\left( 1-\sum_{i=2}^d x_i \right)^{m_1+m_{d+1}-1}\d x_2 \dots \d x_d \notag \\
		= & \beta(m_1,m_{d+1})\times I_d^{(d)}(m_2,\dots , m_d,m_1+m_{d+1}) + I_1^{(2)}(m_1,m_{d+1})\times \dfrac{\Gamma(m_1+m_{d+1})\Gamma(m_2)\dots \Gamma(m_d)}{\Gamma \left( \sum_{i=1}^{d+1} m_i\right)}
	\end{align}
	which is finite by hypothesis.
	
	By an analogous reasoning, we obtain
		\begin{multline}
		I_{d+1}^{(d+1)}(m_1,\dots , m_{d+1}) \leqslant \beta (m_1,m_{d+1})\times I_d^{(d)}(m_2,\dots m_d, m_1+m_{d+1}) \\
		 + \dfrac{\Gamma (m_1+m_{d+1})\Gamma (m_2) \dots \Gamma (m_d)}{\Gamma \left( \sum_{i=1}^{d+1} m_i \right)}\times I_2^{(2)}(m_1,m_{d+1}) 
		\end{multline}
	which is finite, and it concludes the proof.
	\end{enumerate}	
\end{proof}

\begin{lem}\label{LogisticLip}
Suppose that $\nu_{\mu,s}$ is the multivariate logistic normal 
with parameters $\mu$ and covariance matrix $sV$ where $s>0$ and $V$ is a given symmetric positive definite matrix. Let also $a>0$ and $b>1$. There then exists $K>0$ such that 
			\begin{equation}
				\forall (\mu,s), (\mu',s') \in [-a,a]^{d-1}\times [1/b,b], \ d_{TV}\left( \nu_{\mu,s} ; \nu_{\mu',s'}\right) \leqslant K \left[\vert \mu - \mu'\vert+\vert s-s'\vert\right].
			\end{equation}
\end{lem}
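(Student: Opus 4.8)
The plan is to reduce the statement to a Lipschitz estimate for the total variation distance between two nondegenerate Gaussian distributions on $\R^{d-1}$, and then to proceed exactly as in the proof of Lemma \ref{DirichletLip}, i.e. by a mean value inequality combined with a uniform integrability argument. The key simplification is that the additive log-ratio transform $\mathrm{alr}$ is a measurable bijection from $\simp_{d-1}$ onto $\R^{d-1}$, and the total variation distance is invariant under such a bijection. Since $Y\sim\nu_{\mu,s}$ if and only if $\mathrm{alr}(Y)\sim\mathcal{N}_{d-1}(\mu,sV)$, we have
$$d_{TV}\left(\nu_{\mu,s};\nu_{\mu',s'}\right)=d_{TV}\left(\mathcal{N}_{d-1}(\mu,sV);\mathcal{N}_{d-1}(\mu',s'V)\right),$$
so it suffices to bound the right-hand side. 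This removes the awkward $1/\prod_i y_i$ factor from the density and replaces the integral over the simplex by an integral of Gaussian densities over $\R^{d-1}$.

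Denoting by $g_{\mu,s}$ the density of $\mathcal{N}_{d-1}(\mu,sV)$ on $\R^{d-1}$ and using the identity $d_{TV}(\gamma_1;\gamma_2)=\tfrac12\int|h_1-h_2|\,d\nu$ recalled in the proof of Lemma \ref{DirichletLip}, I would write
$$d_{TV}\left(\mathcal{N}_{d-1}(\mu,sV);\mathcal{N}_{d-1}(\mu',s'V)\right)=\frac12\int_{\R^{d-1}}\left|g_{\mu,s}(z)-g_{\mu',s'}(z)\right|dz.$$
For each fixed $z$ the map $(\mu,s)\mapsto g_{\mu,s}(z)$ is $\mathcal{C}^1$ on $[-a,a]^{d-1}\times[1/b,b]$, with partial derivatives equal to $g_{\mu,s}(z)$ times a polynomial in $z$ of degree at most two: differentiation in $\mu_i$ produces the factor $\tfrac1s\left(V^{-1}(z-\mu)\right)_i$, and differentiation in $s$ produces $-\tfrac{d-1}{2s}+\tfrac{1}{2s^2}(z-\mu)'V^{-1}(z-\mu)$. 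By the mean value inequality, $\left|g_{\mu,s}(z)-g_{\mu',s'}(z)\right|\le\sup_{(\mu,s)}|Dg_{(\mu,s)}(z)|\cdot\left[|\mu-\mu'|+|s-s'|\right]$, so the lemma follows with $K=\tfrac12\int_{\R^{d-1}}\sup_{(\mu,s)}|Dg_{(\mu,s)}(z)|\,dz$, provided this integral is finite.

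The main point, and the only place requiring care, is therefore to check that $\int_{\R^{d-1}}\sup_{(\mu,s)}|Dg_{(\mu,s)}(z)|\,dz<+\infty$ uniformly over the compact parameter set. Here I would exploit that the covariances stay bounded away from degeneracy: since $s\ge 1/b$ and $V$ is positive definite, one has $(z-\mu)'V^{-1}(z-\mu)\ge c\,|z-\mu|^2$ with $c=1/\lambda_{\max}(V)>0$, hence $g_{\mu,s}(z)\le C\exp\left(-\tfrac{c}{2b}|z-\mu|^2\right)$ for a constant $C$ depending only on $b$ and $V$. For $|z|$ large this is dominated, up to a multiplicative constant, by a single fixed centered Gaussian density (using $|z-\mu|\ge|z|-a\sqrt{d-1}$), and multiplying by the at-most-quadratic polynomial coming from the derivatives preserves integrability. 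This produces a finite dominating integral independent of $(\mu,s)$, which completes the argument. The computation is entirely parallel to the Bertrand-integral estimates in Lemma \ref{DirichletLip}, but simpler, since Gaussian tails make all the relevant integrals trivially convergent.
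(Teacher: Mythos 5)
Your proof is correct, and it takes a genuinely different route from the paper's after the common first step. Both arguments begin identically, by using the fact that $Y\sim\nu_{\mu,s}$ iff $\mathrm{alr}(Y)\sim\mathcal{N}_{d-1}(\mu,sV)$ to reduce the problem to the total variation distance between two nondegenerate Gaussians on $\R^{d-1}$ (the paper states this as an inequality via a change of variables; your equality, by invariance of $d_{TV}$ under a bimeasurable bijection, is also valid). From there the paper does not differentiate anything: it invokes Proposition $2.1$ of \citet{devroye2018total}, which gives the closed-form bound
\begin{equation*}
d_{TV}\left( \nu_{\mu,s} ; \nu_{\mu',s'}\right)\leq \frac{1}{2}\sqrt{(d-1)(s'/s-1)+(\mu-\mu')'s^{-1}V^{-1}(\mu-\mu')-(d-1)\log(s'/s)},
\end{equation*}
and then extracts the Lipschitz estimate from the elementary inequality $x-1-\log(x)\leq (x-1)^2$ for $x\geq 1$, treating the case $s'<s$ by exchanging the roles of the two parameters. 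You instead prove the needed Gaussian estimate from scratch: a mean value inequality on the $\mathcal{C}^1$ map $(\mu,s)\mapsto g_{\mu,s}(z)$ (whose partial derivatives you compute correctly, and which applies since $[-a,a]^{d-1}\times[1/b,b]$ is convex), followed by a uniform domination of $\sup_{(\mu,s)}\vert D g_{(\mu,s)}(z)\vert$ by a fixed polynomial-times-Gaussian envelope, using $s\geq 1/b$ to control the normalizing constant and $s\leq b$ together with $\lambda_{\max}(V)$ to control the tail. This is exactly the scheme of the paper's own proof of Lemma \ref{DirichletLip}, and as you note the Gaussian tails make the integrability check much easier than the Bertrand-type estimates needed there. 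What each approach buys: the paper's argument is shorter and yields a semi-explicit constant with clean dimension dependence, at the cost of an external citation and a case split in $s$ versus $s'$; yours is fully self-contained, symmetric in the two parameters, and extends without modification to richer smooth parameterizations of the covariance (e.g.\ a full matrix parameter rather than a scalar multiple of a fixed $V$), where a closed-form TV bound of Devroye type would be less convenient.
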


\paragraph{Proof of Lemma \ref{LogisticLip}}
Without loss of generality, we assume that $\vert\cdot\vert$ is the Euclidean norm.
For a measurable mapping $h:\mathcal{S}_{d-1}\rightarrow [-1,1]$, we have 
$$\int h d\nu_{\mu,s}=\int h\circ \mbox{ alr }^{-1} \phi_{\mu,s}d\lambda_{d-1},$$
where $\phi_{\mu,s}$ is the Gaussian density with mean $\mu$ and variance $sV$
and $\lambda_{d-1}$ is the Lebesgue measure on $\R^{d-1}$.
We then get 
$$d_{TV}\left( \nu_{\mu,s} ; \nu_{\mu',s'}\right)\leq \frac{1}{2}\int\left\vert \phi_{\mu,s}-\phi_{\mu',s'}\right\vert d\lambda_{d-1}.$$
Using Proposition $2.1$ in \citet{devroye2018total}, we get the bound
\begin{equation}\label{nice}
d_{TV}\left( \nu_{\mu,s} ; \nu_{\mu',s'}\right)\leq \frac{1}{2}\sqrt{(d-1)(s'/s-1)+(\mu-\mu')'s^{-1}V^{-1}(\mu-\mu')'-(d-1)\log(s'/s)}.
\end{equation}
In the case $s'\geq s$, one can use the inequality 
$$x-1-\log(x)\leq (x-1)^2,\quad x\geq 1.$$
We then obtain
$$2d_{TV}\left( \nu_{\mu,s} ; \nu_{\mu',s'}\right)\leq \sqrt{d-1}\vert s'/s-1\vert+s^{-1/2}\lambda^{-1/2}\vert \mu-\mu'\vert,$$
where $\lambda>0$ is the smallest eigenvalue of $V$. Since, $s,s'$ are located in a compact interval included in the set of positive real numbers, the result is now straightforward. The case $s'<s$ is similar, by inverting the roles of $(\mu,s)$ and $(\mu',s')$ in (\ref{nice}).

\hfill \qedsymbol

\subsection{Proof of Proposition \ref{cube}.}

We apply Corollary \ref{corexist}.
We only check {\bf H1-H2} for the measure $\mu_{\alpha}$, the rest of the proof being similar to the proofs of Proposition \ref{PDirichlet1} or Proposition \ref{PDirichlet2}.
For $\alpha=(\alpha_1,\ldots,\alpha_{d+1})$ in $(0,\infty)^{d+1}$, the measure $\mu_{\alpha}$ has a density w.r.t. the Lebesgue measure on $\R^d$ given by 
$$f_{\alpha}(u_1,\ldots,u_d)=\frac{\Gamma(\alpha_1+\cdots+\alpha_{d+1})\prod_{i=1}^d \frac{u_i^{\alpha_i-1}}{(1-u_i)^{\alpha_i+1}}\mathds{1}_{(0,1)}(u_i)}{\Gamma(\alpha_1)\cdots \Gamma(\alpha_{d+1})\left(1+\sum_{i=1}^d \frac{u_i}{1-u_i}\right)^{\sum_{i=1}^d \alpha_i}}.$$
Since $f_{\alpha}$ is continuous and positive on its support, it is straightforward to show {\bf H1}. For $s>0$, let $g_s$ be the probability density of the gamma distribution with parameter $s$.
To check {\bf H2}, for $1\leq i\leq d+1$, let $Z_i$ and $Z_i'$ be two random random variables with a gamma distribution with respective parameters $\alpha_i$ and $\alpha_i'$. We suppose all the random variables independent. Let $h:\R^d\rightarrow [0,1]$ be a measurable function. If $U:=(U_1,\ldots,U_d)$, $U'=(U_1',\ldots,U_d')$ and $V=(V_1,\ldots,V_d)$ with $U_i=Z_i/(Z_i+Z_{d+1})$, $U_i'=Z_i'/(Z_i'+Z_{d+1}')$ and $V_i=Z_i/(Z_i+Z_{d+1}')$ for $1\leq i\leq d$, we have
$\E h(U)-\E h(U')=\E h(U)-\E h(V)+\E h(V)-\E h(U')$. From the independence properties and Fubini's theorem, we have
$$\left\vert \E h(U)-\E h(V)\right\vert\leq \int \vert g_{\alpha_{d+1}}(x)-g_{\alpha_{d+1}'}(x)\vert dx$$
and
\begin{eqnarray*}
\left\vert \E h(V)-\E h(U')\right\vert&\leq& d_{TV}\left(\P_{(Z_1,\ldots,Z_d)},\P_{(Z_1',\ldots,Z_d')}\right)\\
&\leq & \sum_{i=1}^d d_{TV}\left(\P_{Z_i},\P_{Z_i'}\right)\\
&\leq & \frac{1}{2}\sum_{i=1}^d\int \left\vert g_{\alpha_i}(x)-g_{\alpha_i'}(x)\right\vert dx.
\end{eqnarray*}
It is then possible to show that for any compact subset $H$ of $(0,\infty)$, there exists $C>0$ s.t. for all $s,s'\in H$,
$$\int \left\vert g_s(x)-g_{s'}(x)\right\vert dx\leq C\vert s-s'\vert.$$
This yields to the validity of {\bf H2} and concludes the proof.$\square$

\subsection{Proof of Proposition \ref{sphere2}}
Before proving the result, we state the following lemma, which ensures a contraction property of a rotation with respect to its axis.
\begin{lem}\label{ax}
For $u$ and $v$ on the unit sphere,
\begin{equation}\label{cont}
\Vert \mathcal{R}_{u,\theta}-\mathcal{R}_{v,\theta}\Vert_o\leq  \kappa_{\theta}\Vert u-v\Vert,
\end{equation}
with $\kappa_{\theta}=2\sqrt{2}\sqrt{1-\cos(\theta)}$.
Here $\Vert\cdot\Vert_o$ denotes the norm $\Vert A\Vert_o=\sqrt{\rho(A^TA)}$ for a square matrix $A$, i.e. the operator norm associated to the Euclidean norm.
\end{lem}

\paragraph{Proof of Lemma \ref{ax}.} Suppose first that $u$ and $v$ are not colinear. We introduce two orthonormal basis: $\{u,w,w_1\}$ and $\{v,w,w_2\}$ where $w$ is orthogonal to $u$ and $v$ 
and $w_1=\mathcal{R}_{w,\pi/2}u$, $w_2=\mathcal{R}_{w,\pi/2}v$. We have the decomposition 
$$\mathcal{R}_{u,\theta}=O_1 D(\theta)O_1^T,\quad \mathcal{R}_{u,\theta}=O_2 D(\theta)O_2^T,$$
where $O_1=[u,w,w_1]$, $O_2=[v,w,w_2]$ and 
$D(\theta)=\begin{pmatrix} 1&0&0\\ 0&\cos(\theta)&-\sin(\theta)\\0& \sin(\theta)&\cos(\theta)\end{pmatrix}$. This yields to 
$$\mathcal{R}_{u,\theta}-\mathcal{R}_{v,\theta}=\left(O_1-O_2\right)\left(D(\theta)-I_3\right)O_1^T+O_2\left(D(\theta)-I_3\right)\left(O_1-O_2\right)^T.$$
This yields to the inequality
$$\Vert \mathcal{R}_{u,\theta}-\mathcal{R}_{v,\theta}\Vert_o\leq 2\Vert O_1-O_2\Vert\cdot\Vert D(\theta)-I_3\Vert.$$
Since 
$$(O_1-O_2)^T(O_1-O_2)=\begin{pmatrix} \Vert u-v\Vert^2&0&0\\0&0&0\\0&0&\Vert u-v\Vert^2\end{pmatrix},$$
we have $\Vert O_1-O_2\Vert_o=\Vert u-v\Vert$. Moreover 
$$\left(D(\theta)-I_3\right)^T\left(D(\theta)-I_3\right)=\begin{pmatrix} 0&0&0\\0& 2-2\cos(\theta)&0\\0&0& 2-2\cos(\theta)\end{pmatrix}.$$
Then $\Vert D(\theta)-I_3\Vert_o\leq \sqrt{2-2\cos(\theta)}$.
We then get (\ref{cont}) with $\kappa_{\theta}=2\sqrt{2}\sqrt{1-\cos(\theta)}$.

Next, if $u$ and $v$ are colinear, then either $v=u$ and the result is immediate or $v=-u$ and in this case, $\mathcal{R}_{v,\theta}=\mathcal{R}_{u,-\theta}$. In the basis $\{u,w,w_1\}$ mentioned above, we have
$$\mathcal{R}_{u,\theta}-\mathcal{R}_{v,\theta}=O_1\left(D(\theta)-D(-\theta)\right)O_1^T,$$
which shows that 
$$\Vert \mathcal{R}_{u,\theta}-\mathcal{R}_{v,\theta}\Vert^2_o=4 \sin^2(\theta)\leq 32 \left(1-\cos(\theta)\right)=\kappa_{\theta}\Vert u-v\Vert.$$
This ends the proof of the proposition.
$\square$

We now go back to the proof of Proposition \ref{sphere}. To apply Corollary \ref{corexist} $1$, it suffices to check Assumption {\bf H3}. Assumptions {\bf H1-H2} were already discussed before the statement of the proposition.
 If $y$ and $z$ are two sequences on the unit sphere such that $y_i=z_i$ for $1\leq i\leq m$, we get from Lemma \ref{ax} 
\begin{eqnarray*}
&&\left\vert \mu(y)-\mu(z)\right\vert\\
&=&\lim_{k\rightarrow \infty}\Vert g_{1,y_1}\circ\cdots\circ g_{k,y_k}(s)- g_{1,y_1}\circ\cdots\circ g_{m,y_m}\circ g_{m+1,z_{m+1}}\circ\cdots\circ g_{k,z_k}(s)\Vert\\
&\leq& \lim_{k\rightarrow \infty}\prod_{i=1}^m \left(2\sqrt{2}\sqrt{1-\cos(\theta_j)}\right)\Vert g_{m+1,y_{m+1}}\circ\cdots\circ g_{k,y_k}(s)-g_{m+1,z_{m+1}}\circ\cdots\circ g_{k,z_k}(s)\Vert\\
&\leq& 2\prod_{i=1}^m \left(2\sqrt{2}\sqrt{1-\cos(\theta_j)}\right).
\end{eqnarray*}
Setting 
$$b_m=\sup\left\{\left\vert \mu(y)-\mu(z)\right\vert: y_i=z_i, 1\leq i\leq m\right\},$$
our assumption on the $\theta_j$'s implies that $\sum_{m=1}^{\infty}b_m<\infty$ and Assumption {\bf H3} of Corollary \ref{corexist} is satisfied.$\square$

\subsection{Proof of Proposition \ref{prop:4.8}}
\begin{enumerate}
\item
We first recall that if $(p_1,\ldots,p_d)$ and $(q_1,\ldots,q_d)$ are two probability measures on a finite set and with a positive probability mass function, the properties of the Kullback-Leibler divergence ensure that 
$$-\sum_{i=1}^dp_i\log q_i\leq -\sum_{i=1}^dp_i\log p_i$$
with an equality if and only if $q_i=p_i$ for $i=1,\ldots,d$. 

Next, we notice that
$$\E m_0\left(\widetilde{\theta}_1\right)=-\sum_{i=1}^d\E\left[\lambda_{i,0}(\theta_1)\log\lambda_{i,0}\left(\widetilde{\theta}_1\right)\right].$$
Applying the previous property, we directly get the result, since the conditional mean takes values in $\mathcal{S}_{d-1}$.
\item
Take any $t \in \left\lbrace 1, \dots , n \right\rbrace$, the application:
	\begin{equation}
	\widetilde{\theta}_1 \longmapsto \mu_t\left(\widetilde{\theta}_1\right)
	\end{equation}
is actually linear in this set up. Furthermore, straightforward computations lead to
\begin{eqnarray*}
&&\sum_{i=1}^dY_{t,i} \log (\lambda_{i,t}(\widetilde{\theta}_1))\\
&=&\sum_{i=1}^{d-1} Y_{i,t}\mu_{i,t}(\widetilde{\theta}_1)-\log \left( 1+ \sum_{j=1}^{d-1} \exp (\mu_{j,t}(\widetilde{\theta}_1))\right)\\ 
&:=& g_t\left(\mu_t(\widetilde{\theta}_1)\right)
\end{eqnarray*}
where $g_t(u)=\sum_{i=1}^{d-1}Y_{i,t} u_i - \log \left( 1+ \sum_{j=1}^{d-1} \exp (u_j)\right)$.

We will prove that the application $g_t$ is concave, which will give the desired result by linearity of $\mu_t$, then taking the opposite and the sum over $t$.

\medskip

By differentiating twice the application $g_t$, we obtain for all $u\in \R^{d-1}$, and all $i, j \in \left\lbrace 1, \dots , d-1 \right\rbrace$ with $i \neq j$ :
\begin{equation}
\dfrac{\partial^2 g_t}{\partial u_i \partial u_j}(u) = \dfrac{\exp(u_i+u_j)}{\left(1+\sum_{k=1}^{d-1}\exp(u_k)\right)^2} = v_i\cdot v_j
\end{equation}
and :
\begin{equation}
\dfrac{\partial^2 g_t}{\partial u_i^2}(u) = \dfrac{-\exp(u_i)-\sum_{k\neq i} \exp(u_i+u_k)}{\left( 1 + \sum_{k=1}^{d-1} \exp(u_k)\right)^2} = -v_i\left(1-u_i\right)
\end{equation}
where $v_k=\dfrac{\exp(u_k)}{1+\sum_{l=1}^{d-1} \exp (u_l)}$.

Let's call $M$ the hessian matrix of $g_t$, we have for any $h \in \R^{d-1}$ :
\begin{align}
h'\cdot M \cdot h & = \sum_{i,j} h_i M_{i,j}h_j \notag \\
 & = \sum_{i\neq j}h_ih_j M_{i,j} + \sum_{i=1}^{d-1} h_i^2M_{i,i}\notag \\
 & = \sum_{i\neq j} v_ih_iv_jh_j - \sum_{i=1}^{d-1} h_i^2(v_i(1-v_i))\notag \\
 & = \sum_{i\neq j} v_ih_iv_jh_j + \sum_{i=1}^{d-1} v_i^2h_i^2 - \sum_{i=1}^{d-1} v_i h_i^2\notag \\
 & = \left( \sum_{i=1}^{d-1} v_ih_i \right)^2 - \sum_{i=1}^{d-1} v_ih_i^2.
\end{align}
However, by convexity, we have :
\begin{equation}
\left( \dfrac{\sum_{i=1}^{d-1} v_ih_i}{\sum_{j=1}^{d-1} v_j}\right)^2 \leqslant \dfrac{\sum_{i=1}^{d-1} v_i h_i^2}{\sum_{j=1}^{d-1}v_j}
\end{equation}
which implies :
\begin{equation}
\left( \sum_{i=1}^{d-1} v_ih_i\right)^2 \leqslant \left( \sum_{i=1}^{d-1}v_i h_i^2 \right) \left( \sum_{j=1}^{d-1} v_j \right).
\end{equation}
Thus :
\begin{equation}
h'\cdot M \cdot h \leqslant \left( \sum_{i=1}^{d-1} v_ih_i^2 \right) \left( \sum_{j=1}^{d-1} v_j -1 \right) \leqslant 0.
\end{equation}
Hence the result.
\end{enumerate}

\bibliographystyle{plainnat}
\bibliography{references}
\end{document}